\newtheorem{theorem}{Theorem}[section]
\newtheorem{corollary}[theorem]{Corollary}
\newtheorem{conjecture}[theorem]{Conjecture}
\newtheorem{lemma}[theorem]{Lemma}
\newtheorem{proposition}[theorem]{Proposition}
\newtheorem{question}[theorem]{Question}
\newtheorem{observation}[theorem]{Observation}
\theoremstyle{remark}
\newcommand{\RR}{{\mathbb R}}
\newcommand{\bc}{\begin{center}}
\newcommand{\ec}{\end{center}}
\newcommand{\be}{\begin{enumerate}}
\newcommand{\ee}{\end{enumerate}}
\newcommand{\bi}{\begin{itemize}}
\newcommand{\ei}{\end{itemize}}
\newcommand{\bv}[1]{\ensuremath{ \vec{\mathbf{#1}}} }
\newcommand{\blue}[1]{{\color{blue}#1}}
\DeclareMathOperator*{\cupdotop}{\ensuremath{\dot{\cup}}}
\DeclareMathOperator{\obs}{\ensuremath{\mathrm{obs}}}
\DeclareMathOperator{\obsout}{\ensuremath{\obs_\mathrm{out}}}
\title{Graphs with Obstacle Number Greater than One}
\author{Leah Wrenn Berman}
\author{Glenn G.~Chappell}
\author{Jill R.~Faudree}
\author{John Gimbel}\thanks{John Gimbel's research was partially supported by the Czech Ministry of Education Grant \#ERC-CZ 1201}
\author{Chris Hartman}
\author{Gordon I.~Williams}
\begin{document}

%\linenumbers

\begin{abstract}
An \emph{obstacle representation} of a graph $G$
is a straight-line drawing of $G$ in the plane
together with a collection of connected subsets of the plane,
called \emph{obstacles}, that block all non-edges
of $G$ while not blocking any of the edges of $G$.
The \emph{obstacle number} $\obs(G)$ is
the minimum number of obstacles required to represent $G$.

We study the structure of graphs
with obstacle number greater than one.
We show that the icosahedron has obstacle number $2$,
thus answering a question of
Alpert, Koch, \& Laison asking whether all planar graphs
have obstacle number at most $1$.
We also show that the $1$-skeleton of
a related polyhedron, the \emph{gyroelongated $4$-bipyramid},
has obstacle number $2$.
The order of this graph is $10$,
which is also the order of the smallest known graph with
obstacle number $2$.

Some of our methods involve instances of the Satisfiability problem;
we make use of various ``SAT solvers'' in order to produce
computer-assisted proofs.
\end{abstract}

\maketitle
%%\linenumbers

%\acom{GGC}{Working Now}

\section{Introduction}

All graphs will be finite, simple, and undirected.
Following Alpert, Koch, \& Laison~\cite{AlpKocLai10}, we define an
 \emph{obstacle representation} of a graph $G$ to be
 a straight-line drawing of $G$ in the plane,
together with a collection of connected subsets of the plane,
called \emph{obstacles},
such that no obstacle meets the drawing of $G$,
while every non-edge of $G$ intersects at least one obstacle. By \emph{non-edge}, we mean a pair of distinct vertices $a,b$ of $G$ where $ab$ is not an edge of $G$.
The least number of obstacles required to represent $G$
is the \emph{obstacle number} of $G$,
denoted $\obs(G)$. For clarity, we will sometimes refer to this as the \emph{ordinary} obstacle number. In an obstacle representation of a graph $G$,
an \emph{outside obstacle} is an obstacle
that is contained in the unbounded component of
the complement of the drawing of $G$.
Any other obstacle is
an \emph{interior obstacle}. Note that by \cite{JohSar11}, determining whether $\obs(G)\le k$ for a given $k$ is \emph{not} in NP.

Figure~\ref{F:ObsEx} shows several obstacle representations
for the Petersen graph $P$.
The usual drawing of $P$ requires five obstacles.
This is illustrated in Figure~\ref{Fp:PetersenAny}
(edges of $P$ are shown as thick line segments,
while \emph{non-edges} are shown as thin line segments).

An obstacle may be any connected plane region;
however, it is convenient to expand each obstacle until it is as large
as possible.
Each of the resulting obstacles is an open plane region
forming a connected component of the complement of the drawing of the graph.
The boundary of the obstacle---which
is not part of the obstacle itself---is
formed by appropriate parts of the edges of the graph.
Figure~\ref{Fp:PetersenFull} illustrates such obstacles
for $P$.

However, $P$ can be represented using fewer obstacles;
$\obs(P)$ is in fact $1$.
Figure~\ref{Fp:PetersenObs} shows an obstacle representation of $P$
using only a single obstacle
(adapted from Laison \cite{Lai}).

\begin{figure}[htbp]
\begin{center}
\ffigbox[\FBwidth]{
\begin{subfloatrow*}[4]
\subfloat[]{\label{Fp:PetersenAny}\includegraphics[width=.3\linewidth]{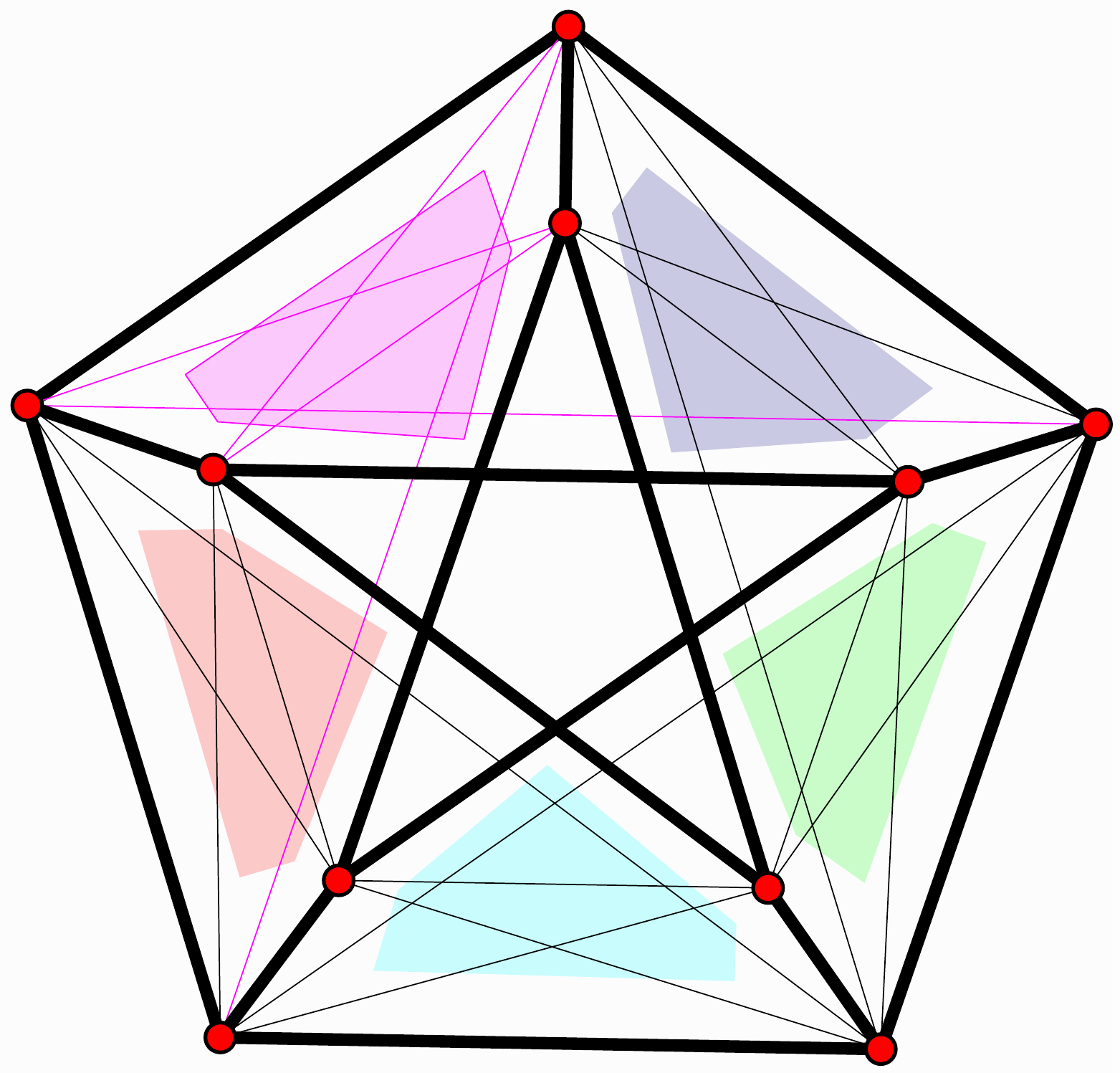}}
\subfloat[]{\label{Fp:PetersenFull}\includegraphics[width=.3\linewidth]{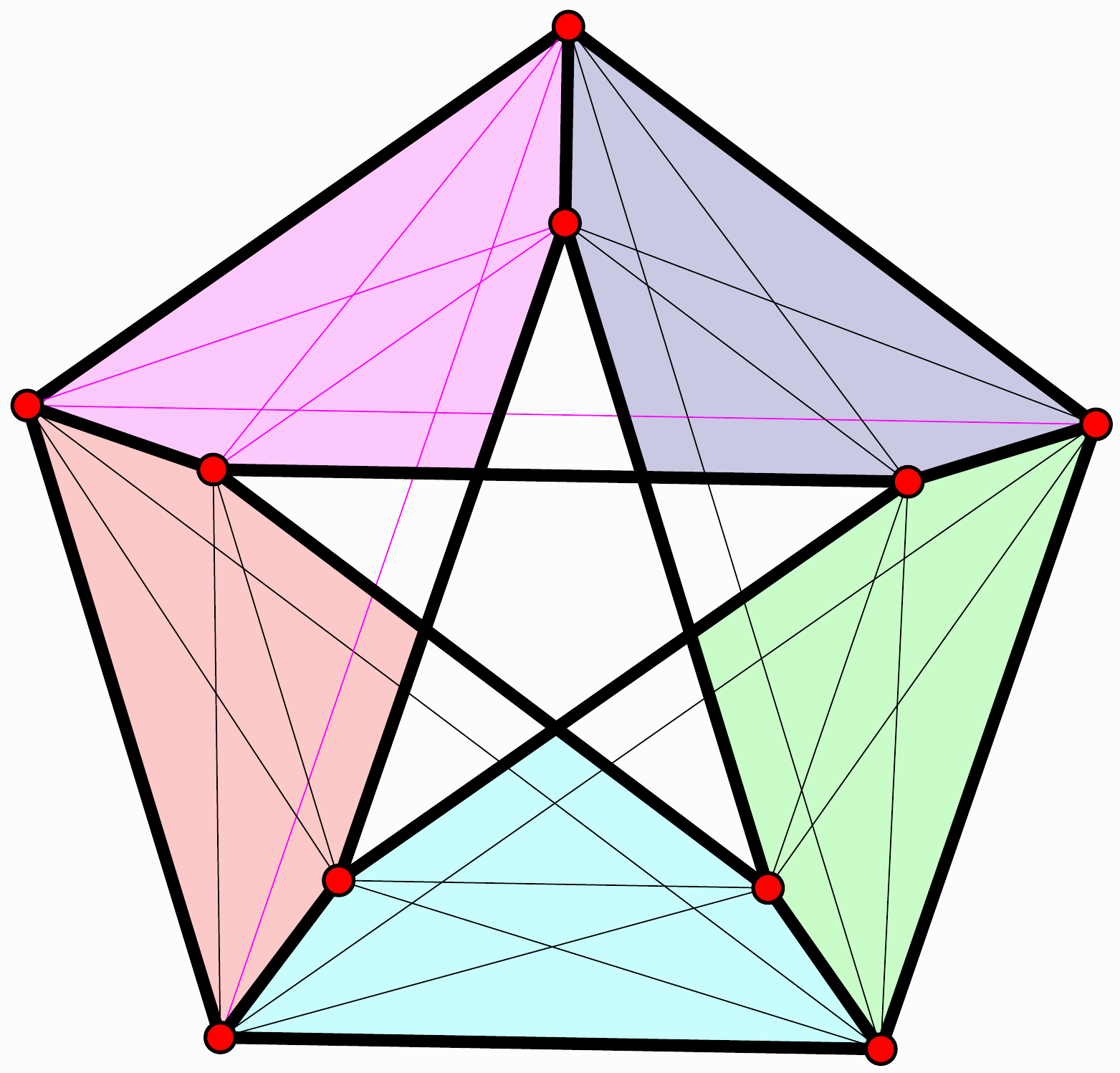}}
\subfloat[]{\label{Fp:PetersenObs}\includegraphics[width=.3\linewidth]{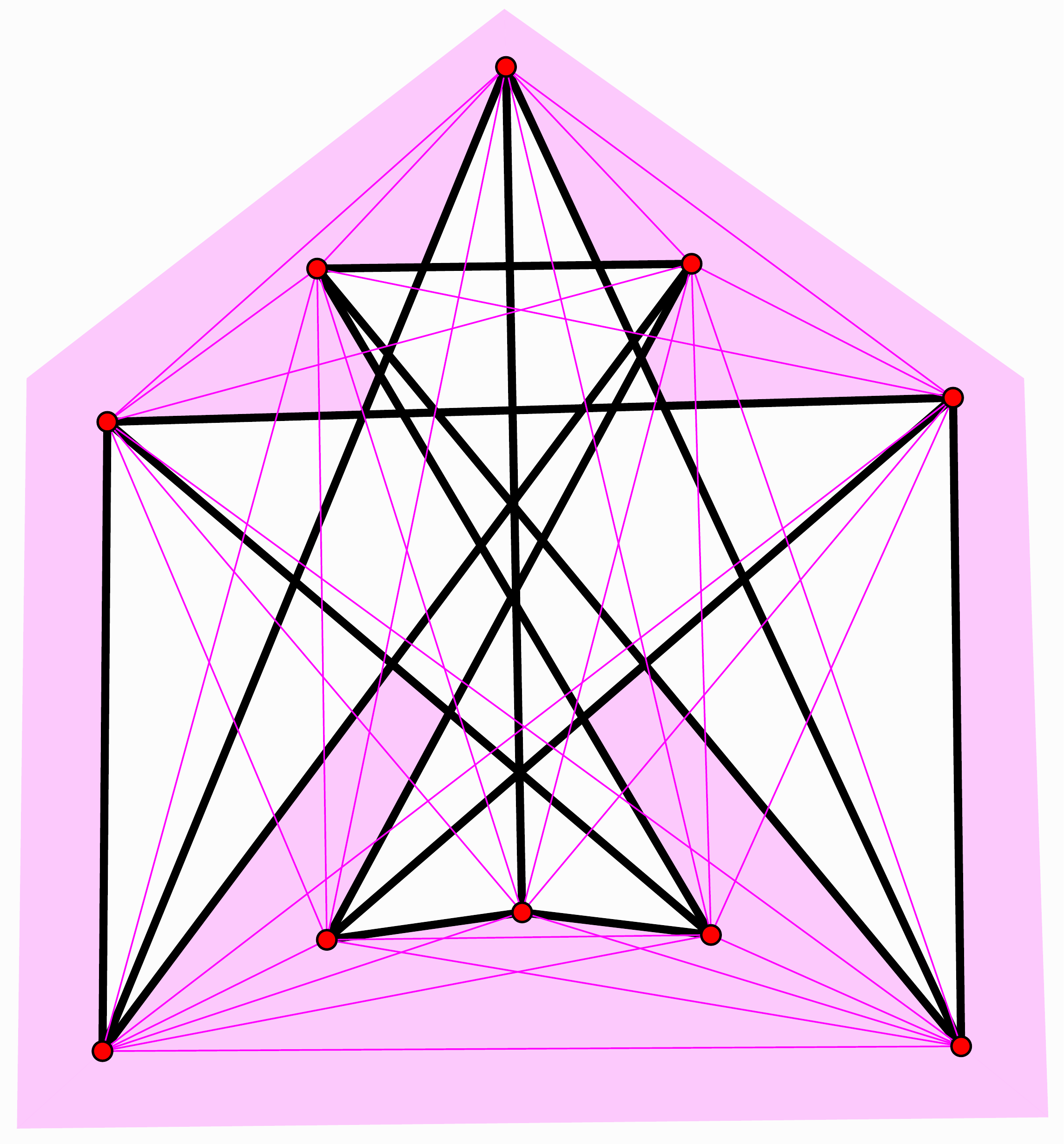}}
\end{subfloatrow*}}{
\caption{(A) The familiar embedding of the Petersen graph,
which requires five obstacles;
the non-edges blocked by the pink obstacle are colored pink.
(B) Here, we expand the obstacles as much as possible,
until each is a maximal connected region in the complement of the drawing.
(C) However, $\obs(P)$ = 1, using a single outside obstacle.}
\label{F:ObsEx}}
\end{center}
\end{figure}

A related parameter,
in which each obstacle is a single point,
was studied by Matou\v{s}ek~\cite{Mat09}
and by Dumitrescu, Pach, and T\'{o}th~\cite{DumPacTot09}.
The study of the obstacle number \textit{per se}
was initiated by Alpert, Koch, \& Laison~\cite{AlpKocLai10}.
These parameters
have since been investigated by others~\cite{FulSaeSar2013,JohSar11,MukPacPal12,MukPacSar10,PacSar10,PacSar11}.

Alpert, Koch, \& Laison~\cite[Thm.~2]{AlpKocLai10}
showed that there exist graphs with arbitrarily high obstacle number
and asked~\cite[p.~229]{AlpKocLai10}
for the smallest order of a graph with obstacle number greater than $1$.
They proved~\cite[Thm.~4]{AlpKocLai10}
that the graph $K^{*}_{5,7}$ has obstacle number $2$,
where $K^{*}_{a,b}$ (with $a\le b$) denotes the graph obtained
by removing a matching of size $a$
from the complete bipartite graph $K_{a,b}$.
Pach \& Sari\"{o}z~\cite[Thm.~2.1]{PacSar11}
found a smaller example of a graph with obstacle number $2$.

\begin{theorem}[Pach \& Sari\"{o}z] \label{T:kstar55}
$\obs(K^*_{5,5}) = 2$.
\end{theorem}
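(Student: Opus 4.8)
The plan is to establish $\obs(K^*_{5,5}) \le 2$ and $\obs(K^*_{5,5}) \ge 2$ separately, with essentially all of the work going into the lower bound. For the upper bound I would not draw anything by hand but instead use monotonicity under induced subgraphs: if $D$ is an obstacle representation of a graph $H$ and $v \in V(H)$, then deleting the point $v$ together with its incident segments from $D$, and keeping every obstacle exactly as it was, yields an obstacle representation of $H - v$, since the obstacles remain connected, remain disjoint from the (now smaller) drawing, and still block every non-edge of $H - v$ --- each such non-edge being already a non-edge of $H$. Hence $\obs(H - v) \le \obs(H)$. Deleting from $K^{*}_{5,7}$ the two vertices of the larger side that are not endpoints of the removed matching leaves exactly $K^*_{5,5}$, so $\obs(K^*_{5,5}) \le \obs(K^{*}_{5,7}) = 2$ by the result of Alpert, Koch, \& Laison mentioned above. (One could also just display an explicit two-obstacle drawing and check it directly.)

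For the lower bound I must rule out a single-obstacle representation, and I would first make the obstacle combinatorial. Suppose $G$ has a representation with one obstacle $O$. Because $O$ is connected and disjoint from all the edge segments of $G$, it lies inside a single face $F$ of the arrangement of those segments (with crossings of edges regarded as arrangement vertices); and because $O$ meets every non-edge, so does $F$. Conversely, the face $F$ is itself a legitimate single obstacle. Thus $\obs(G) \le 1$ if and only if $G$ has a straight-line drawing in which some face of the arrangement of its edges meets every non-edge of $G$, and it suffices to show that $K^*_{5,5}$ admits no such drawing.

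Write $A = \{a_1, \dots, a_5\}$ and $B = \{b_1, \dots, b_5\}$, so the $20$ edges are the pairs $a_i b_j$ with $i \ne j$, while the $25$ non-edges are the $\binom{5}{2}$ pairs inside $A$, the $\binom{5}{2}$ inside $B$, and the five matching pairs $a_i b_i$. In a hypothetical good drawing with good face $F$, I would gather constraints locally: at each vertex $a_i$ the four incident edges cut a neighborhood of $a_i$ into four angular sectors, and each of the five non-edges at $a_i$ (four into $A$, one to $b_i$) leaves $a_i$ inside one of these sectors and must reach $F$; likewise at every $b_j$. I would then combine these twenty local pictures with the global fact that the almost-complete bipartite pattern of edges between $A$ and $B$ leaves, roughly, only the five ``corridors'' vacated by the missing matching edges through which a connected region avoiding all edges can travel from the neighborhood of $A$ to that of $B$, and argue --- by a case analysis organized by the convex-hull structure of the ten points and by which corridors $F$ threads --- that no single face can be met at once by all ten internal-$A$ segments, all ten internal-$B$ segments, and all five matching segments. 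A convenient auxiliary ingredient is a precise description of which faces of a segment arrangement a given segment can meet, and in particular the observation that a connected region meeting every one of the $\binom{k}{2}$ segments spanned by $k$ points in convex position must ``wrap all the way around'' the hull of those points, which is hard to reconcile with the separation imposed by the edges.

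The real difficulty is this last step. The passage to faces is routine and the upper bound is essentially free, but converting the intuitive ``$A$ and $B$ are separated except through five corridors'' picture into a rigorous contradiction that holds for \emph{every} straight-line drawing --- with the obstacle allowed to be interior or exterior, the ten points in any combinatorial position, and $F$ threading several corridors simultaneously --- requires a careful, and somewhat lengthy, case check. For a graph this small the check is feasible by hand; for larger graphs, such as the icosahedron considered later in this paper, one should expect to need computer assistance.
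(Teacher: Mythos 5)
This theorem is not proved in the paper at all: it is quoted from Pach \& Sari\"{o}z~\cite{PacSar11}, so there is no internal proof to compare against. Judged on its own terms, your proposal has a genuine gap. The upper bound is fine (induced-subgraph monotonicity applied to $K^{*}_{5,7}$, exactly as in Observation~\ref{O:induced} and Proposition~\ref{P:closed-induced}, or an explicit drawing), and the reduction of the lower bound to ``no face of the arrangement of the $20$ edges meets all $25$ non-edges'' is correct and standard. But that reduction is where the theorem actually begins, and everything after it in your write-up is a plan rather than a proof: you describe local sector constraints at each vertex, a ``five corridors'' picture, and a case analysis ``organized by the convex-hull structure of the ten points,'' and then explicitly defer the ``careful, and somewhat lengthy, case check.'' Since the lower bound is the entire content of the statement, the proof is not complete.

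Moreover, some of the ingredients you propose to lean on are not correct as stated. The claim that the bipartite edge pattern leaves ``only the five corridors vacated by the missing matching edges'' presupposes a particular combinatorial position of the ten points; for an arbitrary straight-line placement (with the obstacle possibly exterior), the face structure of the arrangement of $20$ segments need not look like this at all. The auxiliary ``observation'' that a connected region meeting all $\binom{k}{2}$ segments spanned by $k$ points in convex position must wrap around their hull is false in general: a plus-sign-shaped region through the diagonal crossing of a convex quadrilateral, with arms crossing the hull edges, meets all six segments without wrapping around anything (it is only forbidden configurations of \emph{edges} of $G$ that could prevent this, and within each class of $K^{*}_{5,5}$ there are no edges). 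The actual Pach--Sari\"{o}z argument is a short, structured one exploiting convex-position subsets of each class together with the matching non-edges; if you want a self-contained proof you should either reproduce an argument of that kind or genuinely execute and verify your case analysis, e.g.\ by the SAT-based method of Section~\ref{S:obs2}, rather than assert that it is ``feasible by hand.''
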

The Pach-Sari\"{o}z example, of order $10$,
is apparently the smallest known graph with obstacle number greater than $1$.
In section~\ref{S:small} we consider
the question of whether any graph of smaller order
has obstacle number greater than $1$.
We do not answer this question;
however, in Proposition~\ref{P:gesdp-obs} we will provide
an example of a \emph{planar} graph of order $10$ with obstacle number $2$.

We define the \emph{outside obstacle number} of $G$
to be the least number of obstacles required to represent $G$,
such that one of the obstacles is an outside obstacle---or
zero if $G$ has obstacle number zero.
We denote the outside obstacle number of $G$ by
$\obsout(G)$.

Clearly we have
$\obs(G)\le \obsout(G)\le \obs(G)+1$
for every graph $G$.
Alpert, Koch, \& Laison~\cite[p.~229]{AlpKocLai10}
asked (using different terminology) whether
every graph $G$ with
$\obs(G) = 1$
also has
$\obsout(G) = 1$.
We ask a more general question.

\begin{question}
Is it true that
$\obs(G) = \obsout(G)$
for every graph $G$?\end{question}

Alpert, Koch, \& Laison~\cite[p.~231]{AlpKocLai10}
asked whether every planar graph has obstacle number at most $1$
(also see a series of questions in the Open Problem Garden \cite{Gar}).
They further asked for the obstacle numbers of
the icosahedron and the dodecahedron.
In Sections~\ref{S:obsout}, \ref{S:obs1}, and \ref{S:obs2}
we develop tools for determining the obstacle numbers of
particular graphs,
and we use them to address these questions.

In Section~\ref{S:obsout} we consider the outside obstacle number.
We make use of instances of the Satisfiability problem (SAT)
to produce computer-assisted proofs that
the outside obstacle number of certain graphs is at least $2$.

In Section~\ref{S:obs1} we consider the ordinary obstacle number.
We show that a lower bound on the outside obstacle number of a graph
implies a lower bound on the obstacle number of a different graph.
We use this to answer the first of the above-mentioned questions
by constructing a planar graph with obstacle number $2$.

In Section~\ref{S:obs2} we develop methods similar to
those of Section~\ref{S:obsout}
for producing computer-assisted proofs that the ordinary
obstacle number of certain graphs is at least $2$.
We answer another of the above-mentioned questions by showing
that the obstacle number of the icosahedron is $2$.
We also show that the obstacle number of the dodecahedron is
$1$.
We further describe a graph of order $10$
that has obstacle number $2$.
This graph, which we call $X_4$, has the same order as the
Pach-Sari\"{o}z example,
but, unlike that graph,
it is planar.

We conclude this section with some easy observations,
which we will make use of throughout the remainder of this paper.

\begin{observation} \label{O:perturb}
Given an obstacle representation of a graph $G$,
we can perturb all vertices an arbitrarily small distance
to obtain an essentially equivalent obstacle representation
%for all of our parameters except $\obs_p$,
in which no three vertices are collinear.
\end{observation}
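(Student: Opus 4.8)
\emph{Proof proposal.}
The plan is the standard ``perturbation into general position'' argument: I will show that being a valid obstacle representation of $G$ persists under \emph{all} sufficiently small perturbations of the vertex positions, whereas ``no three vertices collinear'' is a \emph{generic} condition; a generic small perturbation then accomplishes both at once. Throughout, write $P=\{p_v : v\in V(G)\}\subseteq\RR^2$ for the vertex positions and $D(P)$ for the drawing, which is a closed set.

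First I would normalize the obstacles. Replacing each obstacle by the connected component of $\RR^2\setminus D(P)$ that contains it yields an obstacle representation of $G$ with no more obstacles, in which every obstacle is an open region. Next, for each obstacle $O$ and each non-edge $ab$ that $O$ blocks, fix a witness point of $(p_a,p_b)\cap O$; since $O$ is open and connected and there are only finitely many witnesses, they can be joined inside $O$ by a polygonal arc, and a sufficiently thin open neighborhood $K$ of that arc is a bounded connected open set with $\overline{K}\subseteq O$. Replacing each $O$ by the corresponding $K$ again gives an obstacle representation of $G$ with no more obstacles, but now each obstacle has compact closure disjoint from $D(P)$, hence lies at some positive distance $d_0>0$ from $D(P)$ (take $d_0$ to be the minimum over the finitely many obstacles), and each witness point $x=(1-t_0)p_a+t_0p_b$ has a ball $B(x,r)$ contained in its obstacle.

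Now fix $\eta>0$ with $\eta<\min(d_0,r)$ over all obstacles and witnesses, shrinking $\eta$ further if a prescribed bound on the perturbation is desired. Suppose every vertex is moved by less than $\eta$, to positions $P'$. Then every point of an edge of $D(P')$ is a convex combination of perturbed endpoints, hence within $\eta$ of the corresponding point of $D(P)$, so $D(P')$ lies in the $\eta$‑neighborhood of $D(P)$ and therefore misses every obstacle; thus no obstacle meets the new drawing. On the other hand, for each blocked non-edge $ab$, the point $x'=(1-t_0)p'_a+t_0p'_b\in(p'_a,p'_b)$ satisfies $|x'-x|\le\eta<r$, so $x'$ lies in the same obstacle as $x$; hence every non-edge is still blocked. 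So the new drawing, with the obstacles unchanged, is again an obstacle representation of $G$, essentially equivalent to the original. Finally, inside the box of admissible perturbations—a set of positive Lebesgue measure in $(\RR^2)^{V(G)}$—the configurations in which some three vertices are collinear form a finite union of zero sets of nonzero polynomials, hence a null set; choosing any admissible $P'$ outside this null set completes the argument.

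The argument is routine; the only point that needs care is that moving a vertex could in principle push an incident edge into an obstacle on whose boundary that edge lay. This is precisely what the preliminary shrinking of each obstacle to a slightly smaller, still-connected region bounded away from the drawing prevents, and that shrinking costs nothing, since only the finitely many already-blocked non-edges must remain blocked.
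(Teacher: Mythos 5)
Your proof is correct. The paper states this as an \emph{Observation} and offers no proof at all (it is treated as self-evident), so there is no argument of the authors' to compare against; your write-up supplies exactly the details needed to make the claim rigorous. The structure is the natural one: normalize each obstacle to the open component of the drawing's complement containing it, then shrink it to a connected region with compact closure that still contains one witness point per blocked non-edge, so that ``being a valid obstacle representation'' becomes an open condition on the vertex positions; a genericity/measure argument then removes collinear triples. The one genuinely delicate point --- that an edge lying on the boundary of a maximal obstacle could be pushed into that obstacle by the perturbation --- is precisely the point you identify and handle with the preliminary shrinking, so the argument is complete. (If one also wanted the perturbed representation to preserve which obstacles are \emph{outside} obstacles, a word would be needed to note that for sufficiently small perturbations the shrunken outside obstacle remains in the unbounded complementary component; but the statement as written does not require this.)
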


Because of the above observation,
we will generally assume that our obstacle representations
have the property that no three vertices are collinear. Lest there be any confusion, only obstacles block edges, vertices do not, so this has no impact on the definition of obstacle number.
\begin{observation}\label{O:simplyconnected} Obstacles
are not required
to be simply connected,
but adding this requirement
does not change the obstacle number.\end{observation}
To see why this is true we note two facts: (1) whether an edge is blocked or not by an obstacle is determined only by whether the edge somewhere crosses a portion of the boundary of the obstacle; (2) if an obstacle is not simply connected, 
then
it may be replaced with a simply connected obstacle that blocks those same edges by removing sufficiently small open sets about curves connecting portions of the complement that do not intersect the points on the boundary where edges of the graph cross the obstacle (the simplest case is cutting a small wedge out of an annulus blocking an edge connecting a vertex inside the annulus to a vertex outside the annulus). 

\begin{observation} \label{O:induced}
Let $G$ be a graph,
and let $H$ be an induced subgraph of $G$.
Given an obstacle representation of $G$,
removing all vertices of $G$ that are not in $H$,
and keeping the same obstacles, results
in an obstacle representation of $H$.\end{observation}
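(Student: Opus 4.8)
The plan is to take the given obstacle representation of $G$ and verify directly that discarding the vertices outside $H$ (along with all edges incident to them) leaves a valid obstacle representation of $H$. First I would note that, because $H$ is an \emph{induced} subgraph, the straight-line segments that survive this deletion are exactly the drawings of the edges of $H$: a segment joining $a,b \in V(H)$ remains present if and only if $ab \in E(G)$, which for an induced subgraph is equivalent to $ab \in E(H)$. Hence the surviving picture is genuinely a straight-line drawing of $H$, and as a point set it is contained in the original drawing of $G$.

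Next I would check the two defining conditions of an obstacle representation for this new drawing, keeping the obstacles unchanged. The ``no obstacle meets the drawing'' condition is immediate, since the drawing of $H$ is a subset of the drawing of $G$ and no obstacle met the latter. For the ``every non-edge is blocked'' condition, let $ab$ be a non-edge of $H$, i.e.\ $a,b \in V(H)$ with $ab \notin E(H)$. Since $H$ is induced, $ab \notin E(G)$ as well, so $ab$ is a non-edge of $G$; therefore in the original representation the segment $ab$ meets some obstacle, and that same obstacle still blocks it here. This gives the conclusion.

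I do not expect a real obstacle in this argument—the content is simply unwinding the definitions. The only place the hypothesis is used is the identification of non-edges of $H$ with non-edges of $G$, which is precisely why ``induced'' is essential: for an arbitrary subgraph a non-edge of $H$ might be an edge of $G$ and hence need not be blocked by any obstacle. Accordingly the write-up will be brief.
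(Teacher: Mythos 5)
Your argument is correct and is exactly the definition-unwinding the paper has in mind; the paper states this as an observation with no written proof, and your identification of where ``induced'' is needed (non-edges of $H$ are non-edges of $G$) is precisely the relevant point. Nothing to add.
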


The next result follows easily from the above observation.

\begin{proposition} \label{P:closed-induced}
Let $k$ be a nonnegative integer.
The class of all graphs $G$ such that $\obs(G)\le k$
is closed under taking induced subgraphs,
and similarly for
%$\obs_c$, $\obs_p$, and
$\obsout$.
\end{proposition}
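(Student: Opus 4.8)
\emph{Proof proposal.} The plan is to deduce both statements directly from Observation~\ref{O:induced}, so that the argument is essentially bookkeeping about what an ``obstacle representation'' and an ``outside obstacle'' require. First I would treat the ordinary obstacle number. Fix a nonnegative integer $k$, suppose $\obs(G)\le k$, and let $H$ be an induced subgraph of $G$. Choose an obstacle representation of $G$ using at most $k$ obstacles, and apply Observation~\ref{O:induced}: deleting the vertices of $G$ not in $H$ together with their incident edges, while keeping the same obstacles, yields an obstacle representation of $H$. The number of obstacles has not increased, so $\obs(H)\le k$. The only point worth spelling out is why the result really is a representation of $H$: every non-edge of $H$ is a non-edge of $G$ precisely because $H$ is \emph{induced}, so it is still met by some obstacle; and the drawing of $H$ is a sub-drawing of that of $G$, so no obstacle meets it. (When $k=0$ this is vacuous: $H$ is complete and has no non-edges.)

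For $\obsout$ the same deletion works, but I would add one topological remark: passing from $G$ to $H$ only enlarges the complement of the drawing, so the unbounded component of the complement of the drawing of $H$ contains the unbounded component of the complement of the drawing of $G$. Hence an obstacle lying in the unbounded component for $G$ still lies in the unbounded component for $H$; that is, an outside obstacle stays an outside obstacle. Starting from a representation of $G$ that witnesses $\obsout(G)\le k$ and has a designated outside obstacle, deletion produces a representation of $H$ with at most $k$ obstacles, one of which is an outside obstacle, so $\obsout(H)\le k$ --- provided $\obs(H)\ge 1$. If instead $\obs(H)=0$, then $\obsout(H)=0\le k$ by the definition of $\obsout$, so this edge case is immediate.

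Since none of these steps uses any geometry beyond ``a subset of a drawing is a drawing'' and ``a superset of a planar region contains that region's unbounded component,'' I do not expect a genuine obstacle here; the only care needed is in invoking Observation~\ref{O:induced} correctly (in particular using that $H$ is induced, so non-edges are preserved) and in not overlooking the definitional clause that makes $\obsout(H)$ vanish when $\obs(H)=0$.
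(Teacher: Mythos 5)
Your proposal is correct and follows the paper's approach exactly: the paper simply notes that the result ``follows easily'' from Observation~\ref{O:induced}, and your write-up supplies precisely the routine details (inducedness preserving non-edges, the obstacle count not increasing, the outside obstacle remaining outside, and the $\obs(H)=0$ edge case) that make that deduction explicit.
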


Proposition~\ref{P:closed-induced}
does not hold if the word ``induced'' is removed,
as the classes in question are generally not closed under
taking arbitrary subgraphs.
For example, Alpert, Koch, \& Laison~\cite[Thm.~2]{AlpKocLai10}
showed that there are graphs with arbitrarily large obstacle number.
But every graph is a subgraph of a complete graph,
and $\obs(K_n) = 0$ for every $n$.

\section{Small Graphs} \label{S:small}

As noted in the previous section,
Pach \& Sari\"{o}z showed that
$\obs(K^{*}_{5,5}) = 2$
(see Theorem~\ref{T:kstar55}).
What can we say about graphs of small order
with obstacle number greater than $1$?

\begin{proposition} \label{P:pendant}
Let $G$ be a graph.
Let $H$ be a graph obtained by starting with
$G$ and adding a new vertex of degree at most $1$.
If $G$ is complete and $H$ is not complete,
then
$\obs(H) = 1 = \obs(G)+1$.
Otherwise,
$\obs(H) = \obs(G)$.
Furthermore,
these continue to hold if
$\obs$
is replaced by
$\obsout$.\end{proposition}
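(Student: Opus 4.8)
The plan is to analyze the structure of obstacle representations near a vertex of degree at most $1$, showing that such a vertex can be ``placed freely'' without affecting the number of obstacles needed. I would break the argument into the two directions of each inequality.

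First I would handle the easy inequalities. For $\obs(H) \le \obs(G) + 1$ in general: start with an optimal obstacle representation of $G$, place the new vertex $v$ very close to its neighbor $w$ (or anywhere in general position if $v$ is isolated), drawing the edge $vw$ if it exists. The non-edges incident to $v$ need to be blocked; since $v$ is placed near $w$ (or anywhere generic), these non-edges can be blocked by at most one additional small obstacle tucked near $v$ — or, if $G$ has no non-edges incident to the corresponding vertex $w$, sometimes an existing obstacle suffices. Conversely, $\obs(G) \le \obs(H)$ follows immediately from Proposition~\ref{P:closed-induced}, since $G$ is an induced subgraph of $H$ (the new vertex is not adjacent to a vertex of degree $\le 1$ in a way that would make $G$ non-induced — $G$ is exactly $H$ minus the new vertex). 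The same observations apply verbatim with $\obsout$ in place of $\obs$, using an outside obstacle throughout.

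The substantive content is the case analysis distinguishing when the ``$+1$'' is actually forced. If $G$ is complete and $H$ is not complete, then $\obs(G) = 0$ but $H$ has at least one non-edge, so $\obs(H) \ge 1$; combined with $\obs(H) \le \obs(G)+1 = 1$ we get equality. (Here I would note $H$ not complete forces $v$ to have degree exactly $0$ or, if $G = K_1$, degree $\le 1$ with $H$ still possibly complete — so one must check $H$ non-complete genuinely happens only when there is a missing edge at $v$.) In all other cases I claim $\obs(H) = \obs(G)$, and the one direction needing work is $\obs(H) \le \obs(G)$. Take an optimal representation of $G$ with $k = \obs(G)$ obstacles. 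If $v$ has a neighbor $w$: since $G$ is not complete (this is the ``otherwise'' case, excluding only $G$ complete with $H$ non-complete; if $G$ is complete and $H$ is complete there is nothing to block), there is a non-edge somewhere, hence $k \ge 1$, and there is an obstacle $\mc O$. I would place $v$ in a tiny neighborhood of $w$, on the side of $w$ facing into a region adjacent to $\mc O$, so that every segment from $v$ to a non-neighbor $u$ runs essentially parallel to the segment $wu$ and can be made to cross $\mc O$ wherever $wu$ does (non-edges at $v$ are a subset of, or closely shadow, the non-edges at $w$ together with possibly $w$ itself if $vw$... but $vw$ is an edge). The delicate point: a non-neighbor $u$ of $v$ might be a neighbor of $w$, so $wu$ is an edge and is not blocked; I must instead route the blocking of $vu$ through $\mc O$ directly. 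This is arranged by choosing the placement of $v$ so that the narrow ``wedge'' of segments from $v$ to all its non-neighbors can be pierced by extending or bending $\mc O$ (using Observation~\ref{O:simplyconnected}, obstacles need not be simply connected) into the small region near $w$ not occupied by edges of $G$.

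I expect the main obstacle to be exactly this last geometric step: verifying that a single pre-existing obstacle can be locally modified to block all non-edges incident to the new degree-$\le 1$ vertex without blocking any edge. The cleanest way is probably to place $v$ extremely close to $w$ and argue that, in a sufficiently small disk $D$ around $w$, the drawing of $G$ looks like a collection of straight rays emanating from $w$ (the edges at $w$), dividing $D$ into sectors; at least one sector $S$ borders the obstacle $\mc O$ (if $k\ge 1$ and $w$ is not ``surrounded'' by edges — and if $w$ were surrounded, i.e.\ incident to all other vertices, one handles that sub-case separately, possibly needing the extra obstacle, but then $G$ is complete or close to it). Place $v$ in $S$ near $w$; the segment $vu$ for a non-neighbor $u$ leaves $D$ through $S$ and thereafter hugs $wu$, so if $wu$ is a non-edge it meets an obstacle, and if $wu$ is an edge we instead ensure $vu$ exits through the part of $S$'s boundary that is part of $\partial\mc O$, so $vu$ meets $\mc O$ right away inside $D$. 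Making ``hugs'' precise via a compactness/small-$\varepsilon$ argument, and cataloguing the finitely many degenerate sub-cases (isolated $v$; $G$ complete; $w$ of full degree) to see that none of them forces an extra obstacle beyond what the statement already allows, is the bulk of the write-up; the rest is bookkeeping.
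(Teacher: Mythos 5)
Your proposal has a genuine gap at exactly the step you flag as delicate. You place $v$ in a small sector at its neighbor $w$ that ``borders the obstacle $\mc O$,'' or else you propose ``extending or bending $\mc O$\dots into the small region near $w$.'' Neither is available in general: the (maximally expanded) obstacle is a connected component of the complement of the drawing of $G$, and there is no reason any face incident to $w$ should be that component. An obstacle cannot be extended across an edge of $G$, so if $\mc O$ lives in a face not incident to $w$, no tentacle can reach a sector at $w$; and Observation~\ref{O:simplyconnected} says obstacles may be \emph{assumed} simply connected, not that they may be rerouted through edges. The consequence shows up in the sub-cases you defer: when every non-neighbor $u$ of $v$ is a neighbor of $w$ (e.g.\ $w$ has full degree in a non-complete $G$), none of the segments $vu$ shadows a blocked non-edge, so your construction must block them all via the local modification of $\mc O$ --- and your hedge ``possibly needing the extra obstacle'' is not permitted, since the statement asserts $\obs(H)=\obs(G)$ there. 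Likewise, for isolated $v$ with $G$ not complete you only establish $\obs(H)\le\obs(G)+1$, not equality.

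The paper's proof inverts your picture and thereby avoids all of this: instead of putting $v$ near its neighbor and dragging the obstacle to $v$, it puts $v$ \emph{inside} an obstacle. For degree $0$, pick a point $p$ in the interior of any obstacle $T$, replace $T$ by $T-\{p\}$, and place $v$ at $p$; every segment out of $v$ immediately enters $T$ and is blocked. For degree $1$ with neighbor $q$, choose $T$ to be the \emph{first} obstacle crossed by a ray from $q$ through the interior of some obstacle, pick $p\in T$ so that the segment $S=pq$ meets no other obstacle, replace $T$ by $T-S$, and place $v$ at $p$; the edge $vq=S$ misses every obstacle, while each non-edge $vu$ (with $u$ not collinear with $p,q$, by Observation~\ref{O:perturb}) leaves $p$ into $T-S$ and is blocked. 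This works with no assumption about the geometry near $q$, handles your ``full degree'' and ``isolated'' sub-cases uniformly, and preserves an outside obstacle if one was used. To repair your write-up you would need to replace the ``place $v$ near $w$'' step with something of this kind.
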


\begin{proof}
We give the proof for $\obs$;
the proof for $\obsout$ is essentially the same.

We have $\obs(G) = 0$ if and only if $G$ is complete;
the cases when $G$ is complete follow easily.
Suppose that $G$ is not complete,
and let $v$ be the new vertex in $H$.

Begin with an obstacle representation of $G$.
First suppose that the degree of $v$ in $H$ is $0$.
In this case, Let $T$ be any obstacle of $G$.
Let $p$ be a point in the interior of $T$, replace $T$ by $T-p$,
and place vertex $v$ at the point $p$.
Then $\obs(H) = \obs(G)$
since the edges between $v$ and the vertices of
$G$ are blocked by $T$.

If instead $v$ has degree $1$,
then let $q\in G$ be the neighbor of $v$ in $H$.
Choose a point $p$ in the interior of an obstacle $T$
so that the line segment $S=pq$ intersects no other obstacle of $G$;
note that $S$ may cross edges of $G$.
(To find such a $T$ choose any obstacle $R$ and a point $r$ on its interior,
and then let $T$ be the first obstacle
crossed by the ray $\overrightarrow{qr}$.)
Now replace $T$ by $T-S$,
and place vertex $v$ at the point $p$.
(See Figure~\ref{F:addpendant} for an illustration.)
This results in an obstacle representation of $H$
using the same number of obstacles as that for $G$,
and also using an outside obstacle
if the representation of $G$ used one.\end{proof}

\begin{figure}[htbp]
\begin{center}
\includegraphics[]{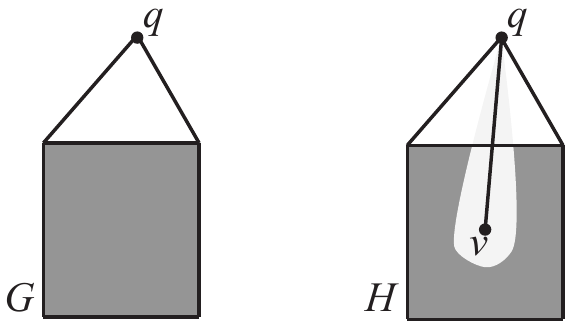}
\caption{The graph $H$ obtained from $G$ by adding a single pendant vertex.
The dark gray shaded area corresponds to an obstacle for each graph.}
\label{F:addpendant}
\end{center}
\end{figure}

We restate  \cite[Thm. ~5]{AlpKocLai10} as follows.
\begin{theorem}\label{l:AKL} The obstacle number of an outerplanar graph is at most 1.\end{theorem}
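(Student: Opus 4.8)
The plan is to exhibit, for any outerplanar graph $G$, an obstacle representation using a single obstacle — and in fact a single \emph{outside} obstacle. The natural idea is to exploit the defining property of outerplanar graphs: they admit a plane embedding in which every vertex lies on the boundary of the unbounded face. I would first reduce to the connected case (a disconnected outerplanar graph is handled by placing its components far apart along a convex curve, which I will discuss below), and then to the maximal case: it suffices to find such a representation for a maximal outerplanar graph, since by Observation~\ref{O:induced} and Proposition~\ref{P:closed-induced} any subgraph obtained by deleting edges can be handled by \emph{adding} the missing edges back as non-edges to be blocked. Wait — deleting edges is not an induced-subgraph operation, so instead I would argue directly: if $G'$ is a spanning supergraph of $G$ that is maximal outerplanar and $G'$ has a one-obstacle (outside) representation, I can try to reuse the same vertex placement for $G$, treating the edges of $G' \setminus G$ as non-edges; the single outside obstacle must then additionally block these. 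So the construction for maximal outerplanar graphs should be robust enough that the outside obstacle already "sees" all the outer-boundary chords.

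The core construction I would pursue: take a maximal outerplanar graph $G$ on $n$ vertices, which is a triangulation of a polygon. Place the $n$ vertices as the vertices of a convex polygon in the plane — in the \emph{same cyclic order} as they appear on the outer face of the embedding. Then every edge of $G$ is a chord or side of this convex polygon, so all edges are drawn as straight segments inside the convex hull with no crossings (since $G$ is planar/outerplanar with this very combinatorial structure, the triangulation edges are pairwise non-crossing chords). The non-edges of $G$ are the remaining chords of the convex polygon. Now I claim a single outside obstacle suffices: the complement of the drawing consists of the unbounded region plus the bounded triangular faces of the triangulation. Every non-edge $ab$ is a chord that is \emph{not} an edge, so it passes through the interior of at least one bounded triangular face; but that does not immediately connect it to the outside. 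So the convex-position placement alone is not enough, and I would instead perturb: push the vertices into a nearly-convex position, or use the structure of the dual tree.

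Here is the cleaner approach, which I expect is the intended one. A maximal outerplanar graph has a weak dual that is a tree $T$ (vertices $=$ inner triangular faces, edges $=$ shared diagonals). I would place the vertices of $G$ and route the single obstacle guided by $T$: realize the drawing so that the bounded faces form a "caterpillar-like" chain whose complement-connections to the outer face can all be merged. Concretely, order the triangles as a path/tree and place vertices so that each internal diagonal, when slightly "opened," lets the outer region reach into every bounded face; then a single outside obstacle — a connected region hugging the outside of the polygon and sending thin tentacles through gaps near each diagonal into each triangular face — blocks every non-edge. Every non-edge is a diagonal of the polygon, hence crosses from one triangular face to another through interior triangles, all of which are reachable by the single obstacle, so the non-edge meets the obstacle; meanwhile the tentacles are chosen thin enough near the boundary to avoid all edges. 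The disconnected case is folded in: place the components side by side, each in its own convex region, and let the one outside obstacle fill the gaps between them, blocking all inter-component non-edges while also doing the intra-component work.

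The main obstacle will be making the "single obstacle with tentacles" rigorous: I must verify that the tentacles through the bounded faces can be made \emph{simultaneously} connected to the outside without any tentacle crossing an edge of $G$, and that together they hit every diagonal. The right way to organize this is an induction on $n$ using the ear structure of maximal outerplanar graphs — remove a degree-$2$ vertex $v$ (an ear), apply induction to get a one-outside-obstacle representation of $G - v$, then reinsert $v$ near the midpoint of the edge $xy$ opposite it, splitting one face into the new ear triangle $xvy$ plus the rest; the obstacle, which already reached the old face, simply follows $v$ in and also blocks the (at most one) new non-edge from $v$. I would also need the base case ($n \le 3$, where $G$ is complete and $\obs = 0$, or we trivially add one outside obstacle) and a short argument that Observation~\ref{O:perturb} lets us assume no three vertices collinear throughout. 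This induction sidesteps the global geometric bookkeeping and reduces everything to a single local "reinsert an ear and extend the obstacle" step, which is the heart of the proof.
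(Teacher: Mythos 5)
First, note that the paper does not prove Theorem~\ref{l:AKL} at all: it is an explicit restatement of Theorem~5 of Alpert, Koch, \& Laison, cited as such, so there is no in-paper proof to compare your argument against. Your attempt must therefore stand on its own as a proof of the AKL result, and it does not: the entire difficulty of the theorem is concentrated in the one step you leave as a sketch, namely producing a vertex placement for which a single connected outside obstacle meets every non-edge while avoiding every edge. Consider the fan (a vertex joined to every vertex of a path), which is maximal outerplanar. In any straight-line drawing in which the outer Hamiltonian cycle bounds a convex polygon---in particular, your ``vertices of a convex polygon in outer-face order'' placement---every non-edge is an interior diagonal of that polygon, while the unbounded component of the complement of the drawing is exactly the exterior of the polygon, so no outside obstacle can reach any non-edge. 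Your proposed repair, ``tentacles through gaps near each diagonal,'' cannot work here because there are no gaps: the bounded faces are separated from the unbounded face by edges of the outer cycle, which the obstacle is forbidden to meet. A correct construction must use a placement in which the outer cycle is drawn non-convexly or with crossings (compare the pentagram placement of the $5$-cycle in Figure~\ref{F:5cycle} and the one-obstacle drawing of the Petersen graph in Figure~\ref{Fp:PetersenObs}); you never commit to, let alone verify, such a placement.

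The ear induction does not rescue this. When you reinsert a degree-$2$ vertex $v$ with neighbors $x,y$ into a representation of $G-v$ on $n-1$ vertices, you create $n-3$ new non-edges $vw$, one for each non-neighbor $w$ of $v$, not ``at most one''; each of these segments must meet the single obstacle. Placing $v$ near the midpoint of $xy$ gives no control over them: if $v$ sits in the unbounded face just outside $xy$, then for every $w$ beyond the line through $x$ and $y$ the segment $vw$ immediately enters the new bounded triangle $xvy$ and then the old drawing, and nothing in your inductive hypothesis guarantees it ever meets the outside obstacle. To make an induction of this shape work you would need a much stronger invariant (for instance, that every segment between non-adjacent placed vertices already crosses a fixed ``pocket'' of the unbounded face, and that this is preserved when an ear is added); establishing such an invariant is precisely the content of the Alpert--Koch--Laison proof, and it is the part you omit. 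Separately, your reduction to the maximal outerplanar case is also incomplete: deleting a diagonal $e$ of the triangulation merges two bounded faces into a bounded quadrilateral, so the new non-edge $e$ need not become visible to the outside obstacle of the maximal graph's representation, a problem you acknowledge but do not resolve.
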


\begin{corollary} \label{P:cyc3}
Let $G$ be a graph with the property that
every cycle in $G$ has length $3$.
Then
$\obsout(G) \le 1$.\end{corollary}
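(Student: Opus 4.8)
The plan is to reduce to Theorem~\ref{l:AKL} by showing that a graph in which every cycle has length $3$ is, up to a harmless modification, outerplanar. First I would recall the structural fact that a graph whose only cycles are triangles has a very restricted shape: each block (maximal $2$-connected subgraph) is either a single edge ($K_2$) or a triangle ($K_3$), since any $2$-connected graph that is not $K_2$ contains a cycle, and a $2$-connected graph containing only triangles as cycles must itself be a triangle (a larger $2$-connected graph has an ear decomposition forcing a longer cycle). Thus $G$ is built by gluing edges and triangles along cut vertices — its block tree has all blocks of order at most $3$. Every such graph is outerplanar: outerplanarity is preserved under gluing two outerplanar graphs at a single vertex, and $K_2$ and $K_3$ are trivially outerplanar. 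Hence $G$ is outerplanar, and Theorem~\ref{l:AKL} gives $\obs(G)\le 1$.

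The one gap is that Corollary~\ref{P:cyc3} claims a bound on $\obsout$, not merely $\obs$, whereas Theorem~\ref{l:AKL} as quoted only bounds the ordinary obstacle number. To close this I would argue that an outerplanar graph in fact has outside obstacle number at most $1$. The natural route is to invoke (or re-derive) the stronger form of the outerplanar construction: draw $G$ with all vertices in convex position in the order given by an outerplanar embedding, so that every edge is a chord of the convex polygon and every non-edge is also a chord or a boundary segment that can be blocked from outside. More carefully, I would use the block-tree structure directly: a single edge needs no obstacle; a triangle needs no obstacle; and gluing at cut vertices, I can realize the whole drawing inside a convex region so that the complement of the drawing in the \emph{unbounded} face is connected and meets every non-edge. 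Because every non-edge of $G$ joins two vertices lying in different blocks (vertices in a common block of order $\le 3$ are adjacent), each non-edge, drawn as a segment, must exit the "cactus-like" drawing and can be intersected by a single outside obstacle surrounding the drawing.

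The main obstacle is precisely this last point: verifying that one outside obstacle suffices to block \emph{all} non-edges simultaneously. The concern is a non-edge whose straight segment passes through the interior of the drawing without ever entering the unbounded face — but since every non-edge connects distinct blocks and the blocks are glued only at cut vertices, I can choose the placement of each block (rotating and scaling subtrees about their cut vertices) so that no non-edge segment is "trapped" inside; equivalently, I place the blocks along a convex arc so that the drawing is itself outerplanar-convex, making every non-edge a long chord that leaves the convex hull's interior region near the boundary. I expect this to require a short but careful inductive argument on the block tree, maintaining the invariant that the current drawing lies in a disk with all vertices on or near its boundary and the outside obstacle can be taken to be the complement of a slightly larger disk. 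Given that invariant, the inductive step — attaching a new $K_2$ or $K_3$ at a cut vertex without destroying it — is routine, and the base cases are immediate.
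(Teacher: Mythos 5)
Your route to outerplanarity differs from the paper's but is equally valid: you decompose $G$ into blocks and observe that each block must be $K_2$ or $K_3$ (via ear decompositions), whereas the paper notes that $G$ can have no $K_4$ or $K_{2,3}$ minor, since a minor with a $4$-cycle would force a cycle of length at least $4$ in $G$, and then invokes the forbidden-minor characterization of outerplanarity. Your block-tree argument is more elementary and gives extra structural information (a ``cactus'' of edges and triangles), while the paper's is shorter. You also correctly spot the one real issue in deducing the corollary from Theorem~\ref{l:AKL}: as quoted, that theorem bounds only $\obs$, not $\obsout$.

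Where the two arguments genuinely diverge is in how that issue is closed, and here your proposal stops short of a proof. The paper closes it in one sentence by looking inside the proof of Theorem~\ref{l:AKL}: the Alpert--Koch--Laison construction for outerplanar graphs already produces a representation whose single obstacle is an \emph{outside} obstacle, so $\obsout(G)\le 1$ follows with no new construction. You instead propose to re-derive an outside-obstacle drawing for cactus graphs by placing the blocks in convex position and arguing that every non-edge (which necessarily joins vertices in distinct blocks) meets the unbounded face. That claim is plausible --- for instance, one can check that near at least one endpoint such a chord leaves the angular sectors occupied by the triangles at that vertex --- but it is exactly the step that carries all the content, and you explicitly defer it to ``a short but careful inductive argument'' that is not supplied; in particular you would need to rule out a non-edge segment that, after exiting one block's triangle, immediately enters another bounded face and never reaches the unbounded region. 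So the proposal is sound in strategy but incomplete at its crux, and the paper's shortcut (citing the outside-obstacle nature of the known outerplanar construction) makes that work unnecessary.
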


\begin{proof}
Graph $G$ can have no minor isomorphic to
$K_4$ or $K_{2,3}$,
since these graphs both have $4$-cycles.
Hence $G$ is outerplanar,
so by Theorem \ref{l:AKL}, the obstacle number is at most $1$.
In their proof,
Alpert, Koch, \& Laison
construct an obstacle representation
using a single outside obstacle,
thus showing that $\obsout$ is at most $1$.
The result follows.\end{proof}

\begin{proposition} \label{P:ord5}
If a graph $G$ has order at most $5$,
then
$\obsout(G) \le 1$.\end{proposition}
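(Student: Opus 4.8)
The plan is to reduce to a small finite check by exploiting the results already established. First I would dispose of the easy cases: if $G$ has order at most $5$ and is complete, then $\obs(G) = \obsout(G) = 0$ by the remark that $\obs(G)=0$ iff $G$ is complete, so we may assume $G$ is not complete and has exactly $5$ vertices (orders $\le 4$ follow by Proposition~\ref{P:closed-induced}, or can be folded into the order-$5$ argument by adding isolated vertices and invoking Proposition~\ref{P:pendant}). It also suffices to prove the bound for \emph{maximal} non-complete graphs on $5$ vertices, since $\obsout$ is monotone under induced subgraphs (Proposition~\ref{P:closed-induced}) and every non-complete graph on $5$ vertices is an induced subgraph of some edge-maximal non-complete graph on $5$ vertices, namely $K_5$ minus a single edge.

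Next I would use Proposition~\ref{P:pendant} to strip off low-degree vertices: if $G$ has a vertex of degree at most $1$, then $\obsout(G) = \obsout(G - v)$ (unless $G-v$ is complete and $G$ is not, in which case $\obsout(G) = 1$ already), and $G - v$ has order at most $4$, so we may induct. Thus the real work is confined to graphs on $5$ vertices with minimum degree at least $2$ that are not complete. One cheap observation handles a large family: if every cycle of $G$ has length $3$, then Corollary~\ref{P:cyc3} gives $\obsout(G) \le 1$; more usefully, Theorem~\ref{l:AKL} together with the construction in the Alpert--Koch--Laison proof gives $\obsout(G) \le 1$ for every outerplanar $G$, and a graph on only $5$ vertices fails to be outerplanar only if it contains $K_4$ or $K_{2,3}$ as a minor. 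So the remaining cases are the (few) non-outerplanar graphs on $5$ vertices with $\delta \ge 2$ that are not complete — concretely things like $K_5$ minus a perfect-matching's worth of edges, $K_5$ minus a path, $W_4$ (the wheel), $K_{2,3}$ itself, and their edge-supersets inside $K_5 - e$.

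For this short finite list I would simply exhibit an explicit obstacle representation with one outside obstacle for each graph, or reduce each to a previously handled case. For instance $K_{2,3}$ and the wheel $W_4$ each admit an easy drawing: place the graph so that all non-edges can be seen to pass through the single unbounded region of the drawing's complement (one may, for $K_5 - e$, put the two non-adjacent vertices so that the segment between them exits to the unbounded face). I expect the main obstacle to be precisely this last step — verifying by hand that each of the handful of dense, non-outerplanar $5$-vertex graphs genuinely has an outside-obstacle representation — but because $K_5 - e$ has only $9$ edges and $1$ non-edge, and its proper non-complete subgraphs are all outerplanar except for $K_{2,3}$, $W_4$, and a couple of others, the enumeration is short enough to carry out completely; one clean way is to note that $K_5 - e$ itself, once drawn with its unique non-edge's endpoints placed on the outer face, needs only a single outside obstacle, and then invoke Proposition~\ref{P:closed-induced} to conclude for every induced subgraph, i.e.\ for every graph of order at most $5$.
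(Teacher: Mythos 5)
Your proposal contains a genuine gap, and it is concentrated exactly where you admit the ``main obstacle'' lies. The fatal step is the closing reduction: you claim that every non-complete graph of order at most $5$ is an induced subgraph of $K_5 - e$, so that a one-outside-obstacle representation of $K_5-e$ plus Proposition~\ref{P:closed-induced} finishes the proof. This confuses subgraphs with \emph{induced} subgraphs. The only induced subgraph of $K_5-e$ on all five vertices is $K_5-e$ itself, and its induced subgraphs on fewer vertices are just $K_4$, $K_4-e$, and smaller complete-or-nearly-complete graphs; you never obtain $C_5$, $K_{2,3}$, $W_4$, or any of the sparser order-$5$ graphs this way. The paper explicitly warns, immediately after Proposition~\ref{P:closed-induced}, that the class $\{G : \obsout(G)\le k\}$ is \emph{not} closed under taking arbitrary subgraphs, so ``$G$ is a subgraph of $K_5-e$'' buys you nothing. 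Deleting edges can only \emph{increase} the obstacle number's difficulty (more non-edges to block), which is precisely why the maximal-graph reduction fails.

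Without that reduction, your argument leaves the genuinely hard cases unproved: you list them (graphs with $\delta\ge 2$ on $5$ vertices that are not covered by outerplanarity) and then say you would ``simply exhibit'' representations, which is where all the content is. The paper's proof supplies the missing idea: if $G$ contains a $5$-cycle, place the five vertices on a circle so that the $5$-cycle is drawn as a pentagram; then \emph{every} possible additional edge lies on the convex hull, so a single outside obstacle blocks all non-edges no matter which extra edges are present. This one drawing disposes of all supergraphs of $C_5$ simultaneously. The remaining case ($4$-cycle but no $5$-cycle, $\delta\ge 2$) collapses by a short combinatorial argument to exactly two graphs, $K_{2,3}$ and $K_{1,1,3}$, which are checked directly. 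Your outline of the early reductions (pendant vertices via Proposition~\ref{P:pendant}, the triangle-only case via Corollary~\ref{P:cyc3}) matches the paper, but the core of the proposition is not established by your proposal as written.
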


\begin{proof}
By Proposition~\ref{P:pendant},
we may assume that $G$ has minimum degree at least $2$,
since vertices of degree at most $1$ do not affect the obstacle number.
There are only four graphs of order at most $4$
whose minimum degree is at least $2$:
$K_3$, $C_4$, $K_4$, and $K_4-e$.
All of these are easily checked.
So assume that $G$ has order $5$.

If $G$ contains no $4$- or $5$-cycle,
then we may apply
Corollary~\ref{P:cyc3}.
If $G$ has a $5$-cycle,
then arrange the $5$ vertices on a circle in the plane
so that each edge in the $5$-cycle joins vertices
that are not consecutive on the circle
(so the $5$-cycle is a pentagram).
No matter what other edges lie in $G$,
an outside obstacle can block all non-edges
(see Figure~\ref{F:5cycle} for an illustration).

\begin{figure}[htbp]
\begin{center}
\includegraphics[]{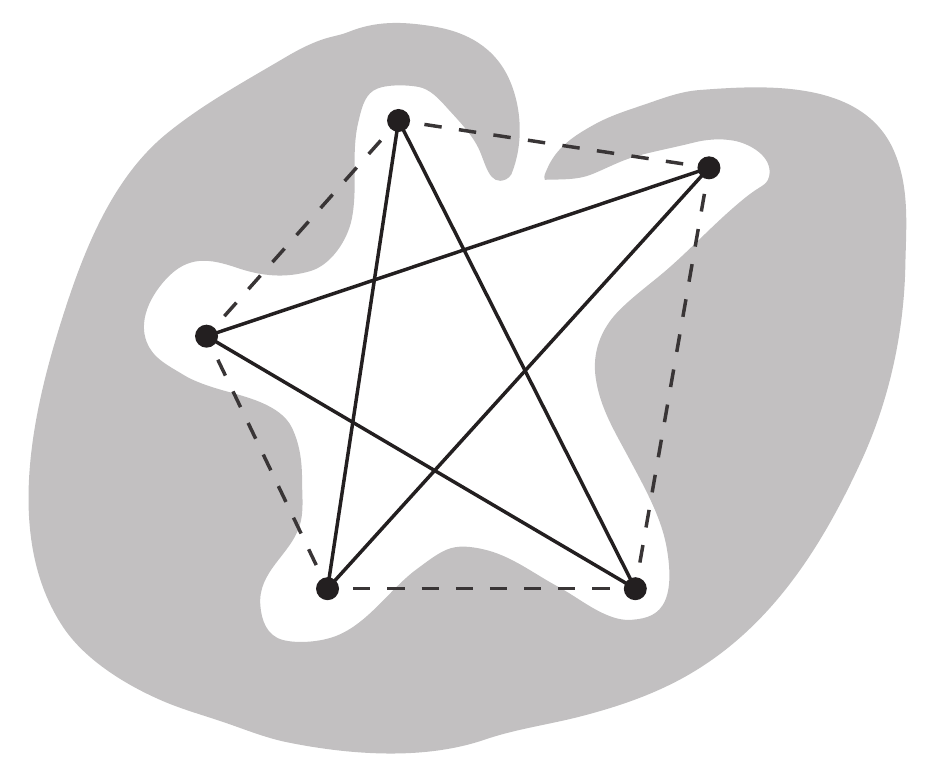}
\caption{An embedding of a $5$-cycle permitting the blocking of any remaining possible edges
(drawn as dashed lines) by a single outside obstacle.}
\label{F:5cycle}
\end{center}
\end{figure}

So we may assume that $G$ contains no $5$-cycle,
but does contain a $4$-cycle.
Choose a $4$-cycle in $G$, and let
$x$ be the vertex of $G$ that does not lie on this $4$-cycle.
Since $G$ has minimum degree at least $2$,
$x$ must be adjacent to at least $2$ vertices of the $4$-cycle.
The neighbors of $x$ cannot include two consecutive vertices
on the $4$-cycle,
since then $G$ would have a $5$-cycle.
Consequently, $x$ has degree exactly $2$,
and its neighbors are nonconsecutive vertices on the $4$-cycle.
It remains to consider adjacencies among vertices in the $4$-cycle.
The non-neighbors of $x$ cannot be adjacent, as this
would form a $5$-cycle in $G$.

The only question left
is whether the neighbors of $x$ are adjacent.
Thus there are exactly $2$ graphs that satisfy our assumptions:
$K_{2,3}$ and $K_{1,1,3}$.
Both are easily shown to have outside obstacle number $1$.\end{proof}

\begin{corollary}
Let $k$ be the minimum order of a graph $G$
with $\obs(G) = 2$.
Then $6\le k\le 10$.
Similarly, the minimum order $k$ of a graph $G$ with
$\obsout(G) = 2$ satisfies $6\le k\le 10$.
\end{corollary}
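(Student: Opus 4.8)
The final statement is the Corollary asserting that the minimum order $k$ of a graph with $\obs(G)=2$ satisfies $6\le k\le 10$, and similarly for $\obsout$.

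\begin{proof}[Proof proposal]
The plan is to assemble the four ingredients already available in this section, so the argument is short and purely a matter of combining lower and upper bounds. For the \textbf{lower bound} $k\ge 6$: I must show every graph on at most $5$ vertices has obstacle number (and outside obstacle number) at most $1$. By Proposition~\ref{P:ord5} we have $\obsout(G)\le 1$ for every $G$ of order at most $5$, and since $\obs(G)\le\obsout(G)$ always, this gives $\obs(G)\le 1$ as well. Hence no graph of order $\le 5$ can have obstacle number or outside obstacle number equal to $2$, so in both cases $k\ge 6$.

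For the \textbf{upper bound} $k\le 10$: I need to exhibit a graph of order exactly $10$ with obstacle number $2$ (which then also forces $\obsout\ge\obs=2$, and since $\obsout\le\obs+1=3$ we separately need $\obsout=2$ for the second claim — but in fact the same example will do). The natural choice is the Pach--Sari\"oz graph $K^*_{5,5}$, which has order $10$ and, by Theorem~\ref{T:kstar55}, satisfies $\obs(K^*_{5,5})=2$. This immediately yields $k\le 10$ for the ordinary obstacle number. For $\obsout$, note that $\obs(K^*_{5,5})=2$ forces $\obsout(K^*_{5,5})\ge 2$; and one checks (or cites the representation implicit in the Pach--Sari\"oz construction) that $\obsout(K^*_{5,5})=2$ as well, so the minimum order for $\obsout=2$ is also at most $10$. (Alternatively, once Proposition~\ref{P:gesdp-obs} or the graph $X_4$ promised in the introduction is available, either of those order-$10$ examples serves the same purpose.)

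Combining the two directions gives $6\le k\le 10$ in each case, as claimed. The only point that requires a word of care is the $\obsout$ half of the upper bound: one must confirm that some order-$10$ witness can be drawn with an \emph{outside} obstacle as one of its two obstacles, rather than merely having two interior obstacles — but since $\obsout\le\obs+1$ and $\obsout\ge\obs$, any graph with $\obs=2$ automatically has $\obsout\in\{2,3\}$, and exhibiting one order-$10$ graph attaining $\obsout=2$ closes the gap. I expect this last verification to be the only non-bookkeeping step, and it follows directly from inspecting the known representations.
\end{proof}
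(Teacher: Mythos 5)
Your proposal is correct and follows essentially the same route as the paper: the lower bound from Proposition~\ref{P:ord5} (with $\obs\le\obsout$), and the upper bound from the Pach--Sari\"{o}z graph $K^*_{5,5}$, noting that their published representation uses an outside obstacle so that $\obsout(K^*_{5,5})=2$ as well. The only difference is presentational—the paper simply cites the figure in~\cite{PacSar11} for the outside-obstacle verification that you flag as the one step needing care.
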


\begin{proof}
The lower bound follows from Proposition~\ref{P:ord5}.
For the ordinary obstacle number,
the upper bound follows from the result of
Pach \& Sari\"{o}z (Theorem~\ref{T:kstar55}).
The obstacle representation constructed by
Pach \& Sari\"{o}z~\cite[Fig.~1]{PacSar11}
uses an outside obstacle,
and so $\obsout(K^*_{5,5}) = 2$ as well.
The upper bound for the outside obstacle number follows.\end{proof}

We now consider what properties a small graph with $\obs(G) > 1$
must have.
We show that such a graph cannot be a subgraph of $K_{4,4}$.

\begin{proposition}\label{P:subgK44}
Let $G$ be a subgraph of $K_{4,4}$.
Then $\obsout(G)\le 1$.
\end{proposition}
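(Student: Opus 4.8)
The plan is to exhibit, for an arbitrary subgraph $G$ of $K_{4,4}$, an explicit straight-line drawing together with a single outside obstacle that blocks every non-edge of $G$. Since the class of graphs with $\obsout \le 1$ is closed under taking induced subgraphs (Proposition~\ref{P:closed-induced}), and since adding or deleting edges only changes which pairs must be blocked, it suffices to handle $G = K_{4,4}$ itself: an obstacle blocking all non-edges of $K_{4,4}$ also works for any subgraph (we simply must block \emph{more} non-edges, but having a drawing in which \emph{every} non-edge of $K_{4,4}$ is blocked means every non-edge of a subgraph is a fortiori blockable, after possibly expanding the obstacle into the regions formerly occupied by deleted edges). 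So the core task is: find a good drawing of $K_{4,4}$ in which all sixteen non-edges (the two within each part, so $\binom{4}{2} + \binom{4}{2} = 12$ of them) can be simultaneously stabbed by one connected region in the unbounded face.

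First I would place the two parts $A = \{a_1,a_2,a_3,a_4\}$ and $B = \{b_1,b_2,b_3,b_4\}$ in convex position in a way that separates them: put all of $A$ on a small arc on the left and all of $B$ on a small arc on the right, so that the eight points are in convex position with the cyclic order $a_1,a_2,a_3,a_4,b_4,b_3,b_2,b_1$ around the boundary of their convex hull. With this placement, every edge $a_ib_j$ of $K_{4,4}$ is a ``long'' chord crossing the vertical gap, while every non-edge $a_ia_j$ or $b_ib_j$ is a chord lying near the left or right boundary of the hull. The key geometric point is that all six non-edges among the $a$'s lie in a thin lens-shaped region hugging the left side of the hull, disjoint from the bundle of crossing edges, and symmetrically on the right; one then routes a single obstacle starting in the unbounded face on the left, reaching in to stab all of the left non-edges, going around (outside the hull, below or above) to the right side, and reaching in to stab the right non-edges. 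The only subtlety is checking that as the obstacle reaches in toward a near-boundary non-edge $a_ia_j$ with $|i-j|\ge 2$ (the "innermost" such chord, $a_1a_4$), it does not collide with any of the crossing edges $a_ka_\ell$—but by pushing the four $a$-vertices close together (nearly collinear, consistent with Observation~\ref{O:perturb} after a tiny perturbation) the entire fan of $a$-non-edges is confined to an arbitrarily thin sliver, while the crossing edges fan out, so a thin finger of obstacle can enter that sliver without meeting any edge.

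The main obstacle I expect is the bookkeeping to confirm that one \emph{connected} region suffices rather than two — i.e., that the left finger and the right finger can be joined through the unbounded face without the connecting arc being forced to cross an edge of $G$. This is where the choice to route the connecting part of the obstacle entirely \emph{outside} the convex hull of the eight points pays off: no edge of $K_{4,4}$ leaves the hull, so the exterior of the hull is edge-free and the two fingers are freely joinable there, giving genuinely one outside obstacle. A clean way to present this is to give a single figure (analogous to Figure~\ref{F:5cycle}) showing the eight vertices in the stated convex position, the twelve possible non-edges as dashed segments clustered on the two sides, and one shaded outside region that is seen by inspection to meet every dashed segment and no solid one; then note that deleting any edges of $K_{4,4}$ only enlarges the set of dashed segments the same region must hit, and the same picture (with the obstacle expanded into vacated regions) still works. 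Finally, invoke Proposition~\ref{P:closed-induced} to pass from $K_{4,4}$ and its spanning subgraphs to all subgraphs, completing the proof that $\obsout(G) \le 1$.
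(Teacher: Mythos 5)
There is a genuine gap, and it is exactly at the step you dismiss as routine: the reduction to $G = K_{4,4}$. A subgraph $G$ of $K_{4,4}$ on all eight vertices is a \emph{spanning} subgraph, not an induced one, so Proposition~\ref{P:closed-induced} gives you nothing here. The non-edges of $G$ that must be blocked include not only the $\binom{4}{2}+\binom{4}{2}=12$ within-part pairs but also every \emph{deleted} edge of $K_{4,4}$, and these are precisely the ``long chords crossing the vertical gap'' in your drawing. Your outside obstacle lives in the exterior of the convex hull plus two thin fingers stabbing the left and right slivers; it never meets the interior region where those chords live, and the parenthetical claim that one can ``expand the obstacle into the regions formerly occupied by deleted edges'' is unjustified: a deleted chord $a_ib_j$ is crossed by many surviving edges $a_kb_\ell$, and whether a connected region anchored in the unbounded face can reach it without touching a surviving edge depends entirely on which edges were deleted and where the vertices sit. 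For a concrete failure, take $G=C_8$ (a spanning $8$-cycle of $K_{4,4}$): every vertex is incident to two deleted chords, and no choice of ``reaching in from the side'' handles the eight interleaved deleted chords in your layout.

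This is in fact the whole content of the proposition. The paper's proof lets $H$ be the set of deleted $K_{a,b}$-edges, reduces to minimum degree $\ge 2$ via the pendant-vertex proposition, and then shows that if $H$ admits a vertex cover $S$ using at most two vertices per part, one can place each part on a near-line with the vertices of $S$ at the extreme ends, so that every edge of $H$ has an endpoint in an extreme position reachable by the outside obstacle. Producing such an $S$ uses the K\"{o}nig--Egerv\'{a}ry theorem when $H$ has no matching of size $4$, and a structural analysis of $G$ (two disjoint $4$-cycles, or an $8$-cycle plus a chord) when it does; the bare $8$-cycle admits no such $S$ and needs a separate ad hoc drawing. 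None of this machinery appears in your argument, so the proposal does not establish the result.
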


\begin{proof}
Let $a$, $b$ be the smallest values such that $G$
is a subgraph of $K_{a,b}$.
Let $H$ be the subgraph of $K_{a,b}$ formed by those
edges that do \emph{not} lie in $G$.

If $G$ has a vertex of degree $0$ or $1$,
then we may apply induction, by Proposition~\ref{P:pendant}.
So assume $G$ has minimum degree at least $2$.
Then $H$ has maximum degree at most $2$.

We observe that,
if there is a set $S$ of vertices containing at most $2$
vertices from each partite set,
such that $S$ contains an endpoint of each edge in $H$,
then we can draw $G$ so that every non-edge is blocked by an outside obstacle,
by placing the two partite sets in (near) lines,
with the vertices in $S$ at the ends of their respective lines.
(See Figure~\ref{F:setSinK44} for an illustration.)
We will use this observation repeatedly in the remainder of this proof.

\begin{figure}[htbp]
\begin{center}
\includegraphics[]{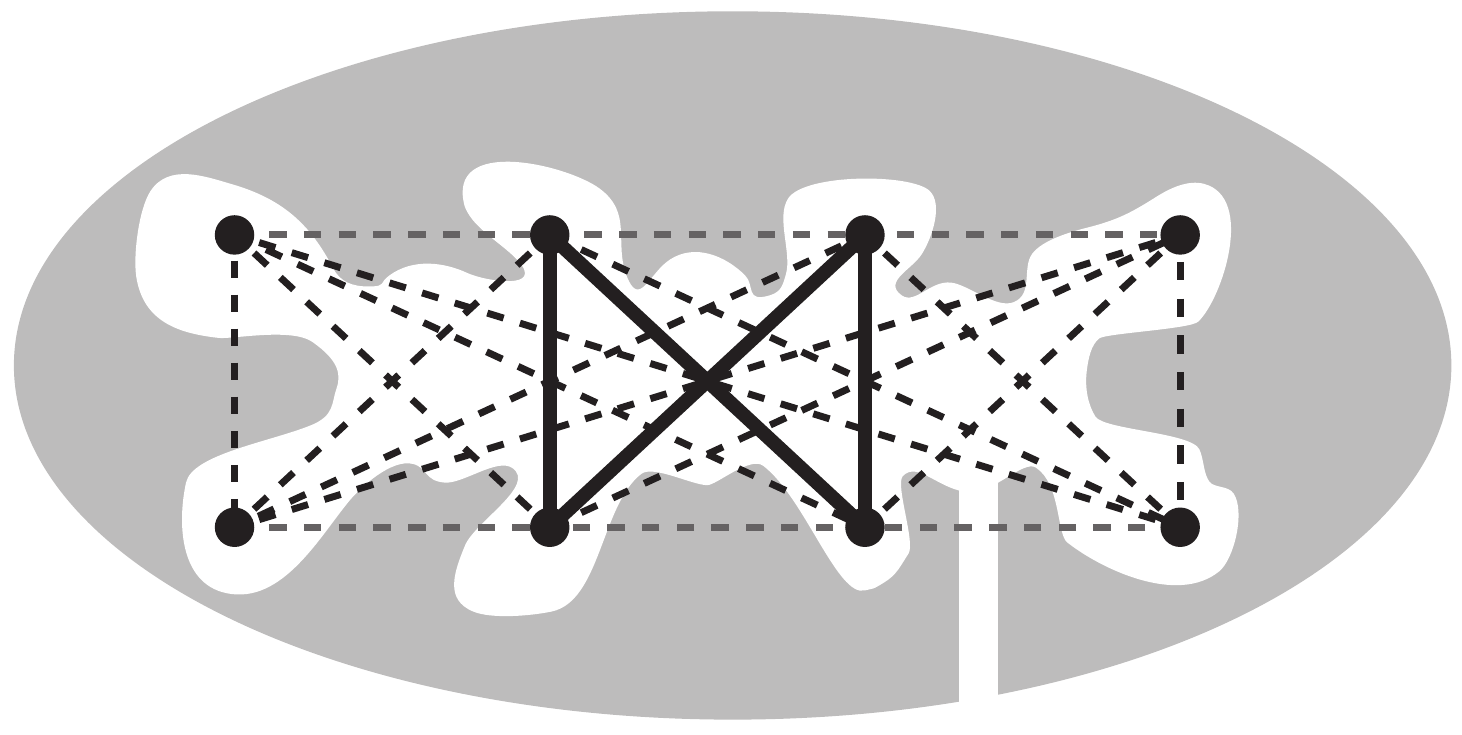}
\caption{Illustration of part of
the proof of Proposition~\ref{P:subgK44}.
The two partite sets of $G$ are formed by the upper and lower four vertices,
respectively.
The set $S$ consists of the two rightmost and the two leftmost
vertices.
Light dashed lines are non-edges within each partite set.
Dark dashed lines are the non-edges between the partite sets,
that is, the edges of $H$;
each has at least one endpoint in $S$.
The drawing illustrates how the gray outside obstacle
can block every non-edge of $G$.}
\label{F:setSinK44}
\end{center}
\end{figure}

If $H$ does not contain a matching of size $4$,
then $H$ has a vertex cover $C$ containing at most $3$ vertices,
by the K\"{o}nig-Egerv\'{a}ry Theorem
(see, e.g., Bondy \& Murty~\cite[p.~74]{BonMur76}).
If $C$ includes members of both partite sets,
then let $S = C$.
Otherwise, replace one of the vertices in $C$
with the other endpoints of the (at most $2$) edges of $H$
it is incident with;
again, we have our set $S$.

So we may assume that $H$ has a matching of size $4$.
Thus $a = b = 4$.
Since $G$ has minimum degree at least $2$,
we have two cases:
in the first case $G$ contains two vertex-disjoint $4$-cycles;
in the second $G$ contains an $8$-cycle.
In the former case, the vertices of one of the $4$-cycles
form our set $S$.
In the latter case, if $G$ contains at least one edge $e = xy$
that is not in the $8$-cycle, then $e$ and $3$ edges of the
$8$-cycle form a $4$-cycle in $G$.
Let $S$ consist of the $4$ vertices of $G$ that
do not lie on this $4$-cycle.

It remains only to handle the case when $G$ is an $8$-cycle
with no additional edges.
In this case there is no set $S$ with the properties we are looking for.
However, we can draw $G$ with every edge blocked by an outside obstacle as in Figure~\ref{F:8cycle}.
\end{proof}

\begin{figure}[htbp]
\begin{center}
\includegraphics[width=.3\linewidth]{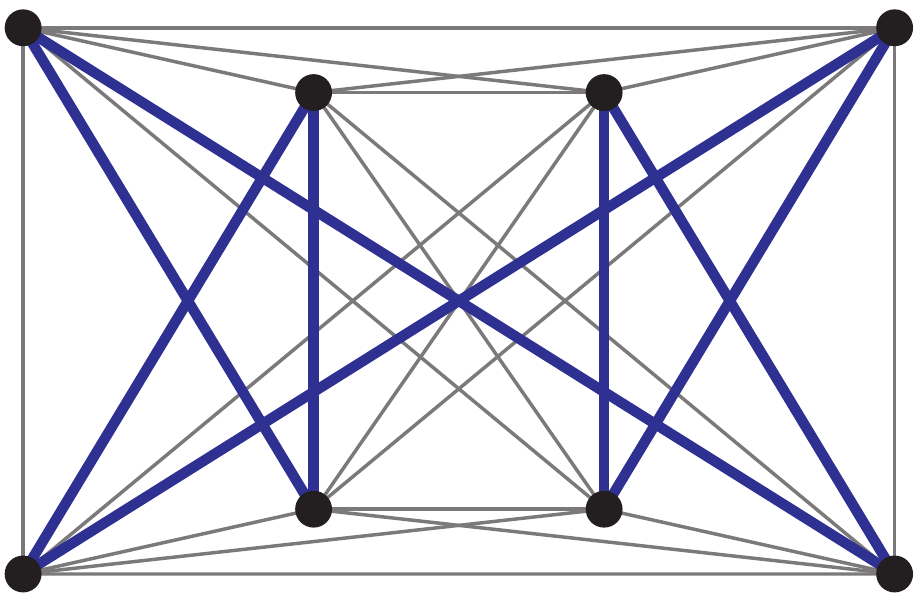}
\caption{An obstacle representation of an $8$-cycle
using a single outside obstacle.
Thin lines represent non-edges.}
\label{F:8cycle}
\end{center}
\end{figure}

\section{Outside Obstacle Number} \label{S:obsout}

We now begin our development of tools for explicitly determining
the (outside) obstacle number of a particular graph.
We will apply these tools to various planar graphs.
In this section, we cover results related to outside obstacle number;
we will consider the ordinary obstacle number in the following sections.

Our ideas are based on the Satisfiability Problem (SAT).
An instance of this problem is a particular kind of Boolean formula.
In this context, a \emph{variable} is a Boolean variable;
i.e., its value is either \emph{true} or \emph{false}.
A \emph{literal} is either a variable or the negation of a variable.
A \emph{clause} is the inclusive-OR of one or more literals.
For example, given variables $x_1$, $x_2$, $x_3$ and $x_4$,
the following is an example of a clause:
\[
x_1 \vee \neg x_2 \vee x_4
\]
An instance of SAT consists of a number of clauses.
The instance is said to be \emph{satisfiable}
if there exists a truth assignment for the variables
such that every clause in the instance is true;
i.e., a truth assignment in which each clause contains at least
one true literal.

We will show how to construct, for each graph $G$,
a SAT instance
encoding
necessary conditions for the existence of an obstacle representation
of $G$ using no interior obstacles.
Thus, if we can show that the instance is not satisfiable,
then we know that $\obsout(G) \ge 2$.
There are a number of freely available,
high-quality implementations of algorithms
to determine satisfiability of a SAT instance.
Using these, we will construct computer-aided
proofs that $\obsout \ge 2$
for various planar graphs.

When interpreting the effect of the satisfiability or non-satisfiability
of the  SAT instances on the obstacle number problem,
it is important to note that \emph{if} the SAT instance is
\emph{not} satisfiable,
then we are guaranteed that it is impossible to draw the graph
using a single outside obstacle;
i.e., $\obsout \geq 2$.
However, if the SAT instance \emph{is} satisfiable,
then it does not follow that $\obsout \leq 1$;
satisfiability is a necessary but not sufficient condition
for concluding that $\obsout \leq 1$.

%\medskip

For $a,b$ and $c\in\RR^2$, we say that $abc$ is a \emph{clockwise triple}
if $a$, $b$ and $c$ appear in clockwise order.
More formally, identifying each point in $\RR^2$
with the corresponding point in the plane $z=1$ in $\RR^3$ via the map $a = (a_{1},a_{2}) \mapsto (a_{1},a_{2},1)$,
we denote
the determinant of the $3\times 3$ matrix with columns $a$, $b$ and $c$
by $[abc]$.
Then $abc$ is a clockwise triple if $[abc] < 0$.
We similarly define \emph{counter-clockwise triple}.
Note that for a triple $abc$, exactly one of the following is true:
$abc$ is clockwise,
$abc$ is counter-clockwise, or
$a$, $b$ and $c$ are collinear (in which case $[abc]=0$).

The following two lemmas give properties that hold for all
point arrangements in the plane.
We will use these properties to construct clauses for our SAT instances.

\begin{lemma}[$4$-Point Rule] \label{L:4pt}
Let $a$, $b$, $c$, $d$ be distinct points in $\RR^2$.
If $abc$, $acd$, and $adb$ are clockwise triples, then $bcd$ must also be a clockwise triple.
%$abc$, $acd$, $ade$, and $abe$ are clockwise triples,
%then either
%\be
%\item both $abd$ and $ace$ are clockwise triples, or
%\item both $abd$ and $ace$ are counter-clockwise triples.
%\ee
\end{lemma}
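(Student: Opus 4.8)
The plan is to argue by contradiction using the determinant characterization of orientation together with a convexity/planarity observation. Suppose $abc$, $acd$, $adb$ are all clockwise, so $[abc]<0$, $[acd]<0$, $[adb]<0$, but $bcd$ is \emph{not} clockwise, i.e.\ $[bcd]\ge 0$. (We may ignore the degenerate collinear subcases, or handle them separately, since a strict-inequality contradiction will be cleaner; note that the three hypotheses already force $a,b,c,d$ to be in ``general enough'' position.) The three hypotheses say that $b$, $c$, $d$ each lie in a consistent rotational order around $a$ — informally, looking out from $a$, the rays to $b$, $c$, $d$ occur in clockwise cyclic order. Geometrically this means $a$ lies inside the triangle $bcd$ (when that triangle is traversed counter-clockwise), which forces $bcd$ to \emph{also} be clockwise once we check the induced orientation — contradiction.

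The cleanest way to make this rigorous is via a linear (barycentric) identity among the four determinants. First I would recall the standard cofactor expansion: for any four points $a,b,c,d$ in the plane (lifted to $z=1$ in $\RR^3$), one has the identity
\[
[bcd]\,\cdot 1 \;=\; [acd] - [abd] + [abc],
\]
equivalently $[bcd] = [abc] - [abd] + [acd]$, which follows by expanding the $3\times 3$ determinant $[bcd]$ after writing each of $b,c,d$ in the affine basis determined by $a$ together with any two of the others — or, most transparently, from the fact that the $4\times 4$ determinant with columns $a,b,c,d$ (each lifted to $(\cdot,\cdot,1)$) and an extra row of the affine coordinate functional has a vanishing $4\times4$ minor. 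Then I would substitute the signs: $[abd] = -[adb]$, so $-[abd] = [adb] < 0$, while $[abc]<0$ and $[acd]<0$; hence the right-hand side is a sum of three strictly negative numbers, giving $[bcd] < 0$, i.e.\ $bcd$ is clockwise, contradicting $[bcd]\ge 0$.

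The main obstacle — really the only thing requiring care — is pinning down the exact form of the four-term determinant identity with the correct signs, since an off-by-sign error makes the whole argument collapse. I would verify it on a concrete configuration (e.g.\ $a=(0,0)$, $b=(1,0)$, $c=(0,1)$, $d=(-1,-1)$, all lifted), checking that $[abc]$, $[acd]$, $[adb]$ come out with matching sign and that $[bcd]$ matches the predicted linear combination; this both confirms the identity and fixes the orientation convention (clockwise $\leftrightarrow$ negative determinant) used throughout. Once the identity is in hand, the lemma is immediate, and the same identity will be reusable for the companion ``$5$-point'' or transitivity-type rules the paper uses to build its SAT clauses.
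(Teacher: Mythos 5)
Your proof is correct, but it takes a genuinely different route from the paper's. The paper treats the $4$-Point Rule essentially as a citation---it is Knuth's \emph{interiority} axiom for CC-systems and a consequence of the fact that a planar point configuration is a rank-$3$ acyclic oriented matroid---supplemented by an informal geometric argument (a fourth point cannot lie on the negative side of all three edges of a clockwise triangle) and a figure. You instead give a short, self-contained algebraic proof: the $4\times 4$ determinant with columns $(a_1,a_2,1,1),\dots,(d_1,d_2,1,1)$ vanishes because two of its rows coincide, and cofactor expansion along the repeated row yields exactly your identity $[bcd]=[abc]-[abd]+[acd]$; since $[abd]=-[adb]$, the hypotheses make the right side a sum of three strictly negative terms, so $[bcd]<0$. (Your sign bookkeeping checks out; on your test configuration $[abc]=1$, $[abd]=-1$, $[acd]=1$, $[bcd]=3=1-(-1)+1$.) Two small remarks: the argument is really direct rather than by contradiction, and your worry about degenerate collinear subcases is moot, since the hypotheses are strict inequalities and the identity forces a strict conclusion. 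Your approach buys a verifiable, purely computational proof in the same spirit as the Grassmann--Pl\"{u}cker relation the paper invokes for the $5$-Point Rule, at the cost of a little determinant algebra; the paper's citation-plus-picture keeps the exposition short and places the rule within the standard axiomatics it relies on.
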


\begin{proof}
The $4$-Point Rule is equivalent to what D.~Knuth
called the \emph{interiority}
property of triples of points;
see Knuth~\cite[p.~4, Axiom~4]{Knu92}. In addition, the $4$-Point Rule is a consequence of the fact that a point configuration is a rank 3 \emph{acyclic} oriented matroid. %; the requirement that whenever $abc$, $acd$, and $adb$ are clockwise triples,  $bcd$ must also be a clockwise triple is a consequence of the acyclicity. 
 Intuitively, the $4$-Point Rule follows from the observation that if $abc$ is a clockwise-oriented triple, then $d$ cannot be ``outside'' all the lines $ab$, $bc$, $cd$ (where ``outside the lines'' means on the negative side of the closed halfplanes determined by the orientation of $abc$). See Figure \ref{fig:4ptRule}.
\end{proof}

\begin{figure}[htbp]
\begin{center}
\begin{tikzpicture}[scale=.5, dot/.style={circle,inner sep=2pt,fill,label={$#1$},name=#1},
  extended line/.style={shorten >=-#1,shorten <=-#1},
  extended line/.default=1cm]
\node[dot=b]  at (-1,0){};
\node[dot=a]at (1,2){};
\node[dot=c] at (-3,3){};
\draw [extended line=0.5cm] (a) -- (b);
\draw [extended line=0.5cm] (b) -- (c);
\draw [extended line=0.5cm] (c) -- (a);

\draw[->] ($(a)!.5!(b)$) -- ($(a)!.5!(b)!.3!90:(b)$);
\draw[->] ($(b)!.5!(c)$) -- ($(b)!.5!(c)!.3!90:(c)$);
\draw[->] ($(c)!.5!(a)$) -- ($(c)!.5!(a)!.3!90:(a)$);

%\draw ($(a)!1.5!(b)$) --  ($(a)!-.5!(b)$) ;
%\draw ($(a)!1.5!(c)$) --  ($(a)!-.5!(c)$) ;
%\draw ($(b)!1.5!(c)$) --  ($(b)!-.5!(c)$) ;

%\draw[->]($(a)!.5!(b)$) -| (c);

\end{tikzpicture}
\caption{If $abc$ is a clockwise-oriented triangle, then $d$ cannot be on the negative side with respect to the orientation of $abc$ (negative side indicated by the arrows) of each of the halfplanes determined by the sides of $abc$. Therefore, if $acd$ and $adb$ are also clockwise-oriented, then $bcd$ must be clockwise too.}
\label{fig:4ptRule}
\end{center}
\end{figure}

We encode the $4$-Point Rule as follows. %via the following SAT clause. %As in Lemma \ref{L:5pt}, 
For each triple $abc$ we introduce a variable $x_{abc}$ representing the statement that $abc$ is a clockwise triple. By Observation~\ref{O:perturb} we may assume that no three vertices
are collinear, so $\lnot x_{abc}$  represents the statement that $abc$ is a counterclockwise triple. The $4$-point rule is represented by the clause
%$abc$, $acd$, and $adb$ are clockwise triples, then $bcd$
%(abc and acd and adb) \implies bcd and P implies Q is logically equivalent to not P or Q
\begin{equation}\label{Cl:4pt}  \lnot x_{abc} \lor \lnot x_{acd} \lor \lnot x_{adb} \lor  x_{bcd}.\end{equation}

\begin{lemma}[$5$-Point Rule] \label{L:5pt}
Let $a$, $b$, $c$, $d$ and $e$ be distinct points in $\RR^2$.
If $abc$, $acd$, $ade$, and $abe$ are clockwise triples,
then either
\be[(i)]
\item both $abd$ and $ace$ are clockwise triples, or
\item both $abd$ and $ace$ are counter-clockwise triples.
\ee
\end{lemma}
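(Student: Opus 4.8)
The plan is to prove the $5$-Point Rule directly, by tracking the angles of the directions emanating from the common apex $a$. (One could also remark that, like the $4$-Point Rule, it holds for every point configuration because it reflects the oriented-matroid structure of a planar point set; but the elementary argument below is short and self-contained.)

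First I would fix notation. By the four clockwise hypotheses, none of $b,c,d,e$ equals $a$, so each of them determines a direction $\theta_x\in\RR/2\pi\ZZ$, namely the direction of $x-a$. The dictionary is: when $a,x,y$ are not collinear, $axy$ is a clockwise triple if and only if the clockwise rotation carrying $\theta_x$ to $\theta_y$ has angle strictly between $0$ and $\pi$; write $\delta(x,y)\in(0,\pi)$ for that angle. Thus the hypotheses say precisely that $\delta(b,c)$, $\delta(c,d)$, $\delta(d,e)$, $\delta(b,e)$ are all defined and lie in $(0,\pi)$. The key fact is the additivity congruence
\[
\delta(b,c)+\delta(c,d)+\delta(d,e)\;\equiv\;\delta(b,e)\pmod{2\pi},
\]
which holds because composing clockwise rotations adds their angles: sweeping clockwise from $\theta_b$ through $\theta_c$ and $\theta_d$ to $\theta_e$ agrees, modulo $2\pi$, with the direct clockwise sweep from $\theta_b$ to $\theta_e$.

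Next I would split into cases. The left-hand side of the congruence lies in $(0,3\pi)$ and the right-hand side in $(0,\pi)$, so the left-hand side lies in $(0,\pi)\cup(2\pi,3\pi)$; it cannot lie in $[\pi,2\pi]$. If $\delta(b,c)+\delta(c,d)+\delta(d,e)\in(0,\pi)$, then $\delta(b,c)+\delta(c,d)$, which is the total clockwise rotation carrying $\theta_b$ to $\theta_d$, is positive and at most the full sum, hence lies in $(0,\pi)$, so $abd$ is a clockwise triple; the same argument applied to $\delta(c,d)+\delta(d,e)$ shows $ace$ is clockwise, giving conclusion (i). If instead the full sum lies in $(2\pi,3\pi)$, then $\delta(b,c)+\delta(c,d)=\delta(b,e)+2\pi-\delta(d,e)>\pi$ (using $\delta(b,e)>0$ and $\delta(d,e)<\pi$) while also $\delta(b,c)+\delta(c,d)<2\pi$; so the total clockwise rotation from $\theta_b$ to $\theta_d$ lies in $(\pi,2\pi)$, which means $abd$ is counter-clockwise. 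Likewise $\delta(c,d)+\delta(d,e)=\delta(b,e)+2\pi-\delta(b,c)$ lies in $(\pi,2\pi)$, so $ace$ is counter-clockwise, giving conclusion (ii). In neither case is either of $abd$, $ace$ degenerate, since the relevant totals avoid $0$ and $\pi$.

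I expect that the only real work is bookkeeping: checking that $\delta(x,y)$ is genuinely well defined in $(0,\pi)$ under the hypotheses, that the partial sum $\delta(b,c)+\delta(c,d)$ really is the clockwise angle from $\theta_b$ to $\theta_d$ (which uses that this sum is less than $2\pi$), and that the interval arithmetic mod $2\pi$ — especially the claim that the full sum cannot land in $[\pi,2\pi]$ — is carried out correctly. If one prefers to avoid the language of rotations, an equivalent but more computational route is to normalize $\theta_b=0$, rewrite the hypotheses as the explicit constraints $\theta_c\in(-\pi,0)$, $\theta_d\in(\theta_c-\pi,\theta_c)$, and $\theta_e\in(\theta_d-\pi,\theta_d)\cap(-\pi,0)$ (all mod $2\pi$), and then case on the sign of $\theta_d+\pi$; this is entirely routine.
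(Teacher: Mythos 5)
Your proof is correct, but it takes a genuinely different route from the paper. The paper disposes of the $5$-Point Rule by citation and by one algebraic identity: it invokes Knuth's \emph{transitivity} axiom for CC systems and, independently, the Grassmann--Pl\"{u}cker relation $[abc][ade]-[abd][ace]+[abe][acd]=0$. From that identity the lemma is immediate: the four hypotheses say $[abc],[acd],[ade],[abe]<0$, so both products $[abc][ade]$ and $[abe][acd]$ are positive, hence $[abd][ace]>0$, i.e.\ $[abd]$ and $[ace]$ are nonzero and of the same sign. Your argument instead works directly with the clockwise angles $\delta(x,y)\in(0,\pi)$ of the directions $x-a$ about the common apex $a$, using the additivity $\delta(b,c)+\delta(c,d)+\delta(d,e)\equiv\delta(b,e)\pmod{2\pi}$ and the resulting dichotomy between the total sweep lying in $(0,\pi)$ or in $(2\pi,3\pi)$. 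I checked the details: your dictionary between ``$axy$ clockwise'' and ``clockwise rotation from $\theta_x$ to $\theta_y$ in $(0,\pi)$'' matches the sign convention $[axy]<0$, the partial sums $\delta(b,c)+\delta(c,d)$ and $\delta(c,d)+\delta(d,e)$ do lie in $(0,2\pi)$ and hence are the canonical clockwise angles for $bd$ and $ce$, and your interval arithmetic in both cases (including the exclusion of the degenerate values $0$ and $\pi$) is right. What the two approaches buy: the Grassmann--Pl\"{u}cker route is a two-line sign computation but leans on an identity the reader must accept or verify separately, whereas your rotational argument is longer yet entirely self-contained and makes geometrically visible why the two triples $abd$ and $ace$ must flip orientation together.
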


\begin{proof}
This is equivalent to what D.~Knuth
called the \emph{transitivity}
property of triples of points;
see Knuth~\cite[p.~4, Axiom~5]{Knu92}.
It also
follows from the Grassman-Pl\"{u}cker relation
\[
[abc][ade]-[abd][ace]+[abe][acd] = 0;
\]
see Bj\"{o}rner \textit{et al.}~\cite[pp.~6--7]{BjoLasStu99}.
\end{proof}

%Since we are placing the vertices of $G$ in the plane, the $5$-Point Rule (Lemma~\ref{L:5pt}) must hold for any $5$ vertices.
%To construct SAT clauses for the $5$-Point Rule, for each triple $abc$ we introduce a variable $x_{abc}$ representing the statement that $abc$ is a clockwise triple. By Observation~\ref{O:perturb} we may assume that no three vertices are collinear,so $\neg x_{abc}$ represents the statement that $abc$ is a counter-clockwise triple.

The $5$-Point Rule
is  represented by the following two clauses:
\begin{gather}
\label{Cl:5pt1}
\neg x_{abc} \vee \neg x_{acd} \vee \neg x_{ade} \vee \neg x_{abe}
  \vee \phantom{\neg} x_{abd} \vee \neg x_{ace};\\
\label{Cl:5pt2}
\neg x_{abc} \vee \neg x_{acd} \vee \neg x_{ade} \vee \neg x_{abe}
  \vee \neg x_{abd} \vee \phantom{\neg} x_{ace}.
\end{gather}
%Our SAT instance includes such clauses for every set of $5$ vertices of our graph, and every permutation of these $5$ vertices.

%%%%%

In constructing our SAT instance, we will make use of
the following lemma,
which describes a restriction
that must hold for an obstacle representation
without interior obstacles.

\begin{lemma} \label{L:extobs}
Suppose we are given an obstacle representation of a graph $G$
using no interior obstacles.
Let $ab$ be a non-edge of $G$.
Then there exists a half-plane $H$ determined by
the line $\overleftrightarrow{ab}$
such that, for each $a,b$-path $P$ in $G$,
some internal vertex of $P$ lies in $H$.
\end{lemma}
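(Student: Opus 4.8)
The plan is to argue by contradiction: suppose no such half-plane exists. Since a non-edge $ab$ is blocked, and all obstacles are outside obstacles, the obstacle that blocks $ab$ meets the segment $ab$ and lies entirely in the unbounded face of the drawing. Consider the two open half-planes $H^+$ and $H^-$ determined by the line $\overleftrightarrow{ab}$. If the Lemma fails, then for each of $H^+$ and $H^-$ there is an $a,b$-path all of whose internal vertices avoid that half-plane; call these paths $P^+$ (internal vertices in the closed half-plane $\overline{H^-}$, i.e. avoiding $H^+$) and $P^-$ (internal vertices avoiding $H^-$). Note these paths exist as sequences of edges of $G$, hence as curves in the drawing, since the drawing is straight-line and no three vertices are collinear (Observation~\ref{O:perturb}), so an internal vertex lies strictly on one side of the line or, in the degenerate case, we may perturb.

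First I would use $P^+$ and $P^-$ to build a closed curve $\gamma$ in the drawing of $G$ that separates the plane, with $a$ and $b$ on $\gamma$. Concretely, $\gamma = P^+ \cup P^-$ is a closed walk from $a$ to $b$ and back; after possibly shortcutting to remove repeated vertices it contains a cycle $C$ through $a$ and $b$ whose ``upper'' arc stays in $\overline{H^-}$ near... — more carefully, the union of the two paths, together with the segment $ab$ itself (which is not an edge but is a curve disjoint from the obstacles except where the blocking obstacle crosses it), bounds a region. The key geometric point: the segment $ab$ crosses from $H^+$ to $H^-$ only at the line, while $P^+$ stays (weakly) in $\overline{H^-}$ and $P^-$ stays (weakly) in $\overline{H^+}$; so the cycle formed by $P^+$ and $P^-$ ``surrounds'' a neighborhood of the open segment $ab$ on at least one side, and in fact the open segment $ab$ minus its endpoints lies in a bounded component of the complement of $P^+ \cup P^-$ in the plane. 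Then I would invoke the Jordan curve theorem on the polygonal closed curve $C \subseteq P^+ \cup P^-$: a point $m$ in the relative interior of $ab$ is in the bounded (interior) region of $C$.

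Next I would derive the contradiction. The obstacle blocking $ab$ contains a point on the open segment $ab$; by the previous step that point is interior to the cycle $C$, which is drawn using only edges of $G$. Since obstacles are connected and disjoint from the drawing of $G$ (in particular from every edge of $C$), the whole obstacle lies in the bounded interior region of $C$ — contradicting the hypothesis that it is an outside obstacle, i.e. lies in the unbounded face of the entire drawing of $G$ (the unbounded face is contained in the unbounded region of $C$). Hence the Lemma holds.

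The main obstacle I anticipate is making rigorous the claim that the open segment $ab$ lies in the bounded region cut off by the cycle $C \subseteq P^+\cup P^-$, handling the degeneracies: internal vertices of $P^\pm$ that lie \emph{on} the line $\overleftrightarrow{ab}$ (the ``weakly'' versus ``strictly'' issue), paths $P^+$ and $P^-$ that share internal vertices or edges, and extracting an honest simple cycle $C$ from the two walks in a way that still traps a point of the open segment $ab$. I would dispatch the collinearity degeneracies using Observation~\ref{O:perturb} to put all internal vertices strictly off the line (after which $P^+$'s internal vertices are in $H^-$ and $P^-$'s in $H^+$, or vice versa), and handle shared vertices by taking a minimal such pair of paths or by a careful planarity/winding-number argument: a small loop around a point $m \in \mathrm{relint}(ab)$ must cross $P^+ \cup P^-$, and tracking the side on which it crosses (all crossings of the little loop with $P^+$ happen on the $H^+$-side of $m$ and with $P^-$ on the $H^-$-side, or the loop is too small to meet an internal vertex at all) forces $m$ to be separated from infinity. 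This winding-number/crossing-parity formulation is probably cleaner than extracting a literal simple cycle, and is what I would write up.
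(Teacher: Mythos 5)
Your proposal is correct and follows essentially the same route as the paper: negate the conclusion to obtain two $a,b$-paths lying in opposite closed half-planes of $\overleftrightarrow{ab}$, observe that their union is a closed curve trapping the open segment $ab$ in a bounded complementary region, and conclude that any connected obstacle blocking $ab$ would have to be an interior obstacle, a contradiction. The paper's write-up is terser (it simply asserts the trapping claim after a perturbation appeal to Observation~\ref{O:perturb}), while you flag and sketch how to handle the same degeneracies; no substantive difference.
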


\begin{proof}
Denote by $H^{+}$ and $H^{-}$ the two closed half-planes
determined by the line $\overleftrightarrow{ab}$.

Suppose that there exist two distinct $a,b$-paths $P_{1}$ and $P_{2}$
in $G$
such that (without loss of generality)
$P_{1}\subseteq H^{+}$ and $P_{2}\subseteq H^{-}$.
Perturbing slightly if necessary,
we assume that no internal vertex of $P_{1}$ or $P_{2}$
lies on the line $\overleftrightarrow{ab}$
(see Observation~\ref{O:perturb}).
We observe then that $P_{1}\cup P_{2}$ forms a closed curve in the plane;
the open line segment from $a$ to $b$ lies in a bounded component
of the complement of this closed curve.
By assumption there is no obstacle lying in such a bounded component,
and thus there is nothing to block the line segment $ab$,
a contradiction.
\end{proof}

Given a graph $G$,
we use Lemmas~\ref{L:4pt}, \ref{L:5pt}, and \ref{L:extobs}
to create a SAT instance
whose satisfiability is a necessary condition for the
existence of a obstacle representation of $G$
using no interior obstacles.
If this SAT instance is not satisfiable,
then we may conclude that graph $G$ requires an interior obstacle,
that is, that $\obsout(G) > 1$.

Since we are placing the vertices of $G$ in the plane,
the $4$-Point Rule (Lemma~\ref{L:4pt})
must hold for any $4$ vertices in the graph,
the $5$-Point Rule (Lemma~\ref{L:5pt})
must hold for any $5$ vertices in the graph,
and by Observation~\ref{O:perturb}
we may assume that no three vertices are collinear.
%To construct SAT clauses for the $5$-Point Rule,
%for each triple $abc$ we introduce
%a variable $x_{abc}$ representing the statement that
%$abc$ is a clockwise triple.
%By Observation~\ref{O:perturb} we may assume that no three vertices
%are collinear,
%so $\neg x_{abc}$ represents the statement that
%$abc$ is a counter-clockwise triple.
%
%The $5$-Point Rule
%is then represented by the following two clauses:
%\begin{gather}
%\label{Cl:5pt1}
%\neg x_{abc} \vee \neg x_{acd} \vee \neg x_{ade} \vee \neg x_{abe}
%  \vee \phantom{\neg} x_{abd} \vee \neg x_{ace};\\
%\label{Cl:5pt2}
%\neg x_{abc} \vee \neg x_{acd} \vee \neg x_{ade} \vee \neg x_{abe}
%  \vee \neg x_{abd} \vee \phantom{\neg} x_{ace}.
%\end{gather}
Our SAT instance includes clauses from the $4$-Point Rule corresponding to clause \eqref{Cl:4pt}
for every set of $4$ vertices of our graph,
and every permutation of these $4$ vertices.
It also includes clauses from the $5$-Point Rule corresponding to clauses \eqref{Cl:5pt1} and \eqref{Cl:5pt2}
for every set of $5$ vertices of our graph,
and every permutation of these $5$ vertices.

Note that there are six ways to say vertices $a, b$ and $c$
lie in clockwise order.
Variables corresponding to even permutations of $abc$
($x_{abc}$, $x_{bca}$, $x_{cab}$)
represent equivalent statements.
Variables corresponding to
odd permutations of $abc$
($x_{bac}$, $x_{acb}$, $x_{cba}$)
represent their negations.

When we construct our SAT instance,
we may choose one of the variables from among $\{x_{abc}, x_{bca}, x_{cab} x_{bac}, x_{acb}, x_{cba}\}$ as the canonical variable;
we represent the other five using either the canonical
variable or its negation, as appropriate.

Additionally, the actions of even permutations of $\{b,c,d\}$ on clauses~(\ref{Cl:4pt} and~(\ref{Cl:5pt1})
result in statements equivalent to the original; 
likewise an even permutation of $\{c,d,e\}$
does not alter clause~(\ref{Cl:5pt2}).
This reduces the number of clauses required by a factor of $3$.
Thus, for an $n$-vertex graph,
our SAT instance includes
$\binom{n}{4}\cdot 4! / 3 = 8\binom{n}{4}$
clauses based on the $4$-Point Rule
and
$2\binom{n}{5}\cdot 5! / 3 = 80\binom{n}{5}$
clauses based on the $5$-Point Rule.

We similarly construct SAT clauses based on Lemma~\ref{L:extobs}.
Since this lemma concerns a line defined by two
points, and which side of this line certain other points lie on,
the lemma can be stated
in terms of clockwise or counter-clockwise triples.
Specifically,
this lemma implies that,
for each pair of nonadjacent vertices $a$, $b$ of $G$,
one of the two half-planes
determined by segment $ab$
is ``special'';
that is, this half-plane
contains at least one internal vertex from each $a,b$-path in $G$.
We create a new variable $s_{ab}$ representing the statement
that the special half-plane is the half-plane containing points $p$
such that $abp$ is a clockwise triple.

Let $a,s,t,\dots,u,b$ be the sequence of vertices in some $a,b$-path $P$.
Then the following clauses represent
the statement of Lemma~\ref{L:extobs} for $P$:
\begin{gather}
\label{Cl:extobs1}
\neg s_{ab} \vee \phantom{\neg} x_{abs} \vee \phantom{\neg} x_{abt} \vee
  \dots \vee \phantom{\neg} x_{abu}\\
\label{Cl:extobs2}
\phantom{\neg} s_{ab} \vee \neg x_{abs} \vee \neg x_{abt} \vee
  \dots \vee \neg x_{abu}
\end{gather}
As with the $x$ variables,
we choose one of $s_{ab}$ and $s_{ba}$
to be the canonical variable,
and we represent the other by its negation.

\begin{observation} \label{O:sat-obsout}
Let $G$ be a graph.
If $\obsout(G) \le 1$,
then the SAT instance consisting of all clauses
of the forms (\ref{Cl:5pt1})--(\ref{Cl:extobs2})---using
canonical variables, as discussed above---is
satisfiable. \end{observation}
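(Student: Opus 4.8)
The plan is to extract a satisfying assignment directly from a geometric realization of $G$ witnessing $\obsout(G)\le 1$. If $\obsout(G)=0$ then $G$ is complete, so it has no non-edges (and in particular no $a,b$-path with $ab$ a non-edge); otherwise $\obsout(G)=1$ and I would fix an obstacle representation of $G$ that uses a single outside obstacle. In either case, applying Observation~\ref{O:perturb} yields a placement of the vertices of $G$ as points of $\RR^2$ with no three collinear, and this placement is all the assignment will use.

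Given the placement, I would define the truth assignment as follows. For each canonical variable $x_{abc}$, set it \emph{true} precisely when the triple $abc$ is clockwise in the drawing; this respects the canonical-variable conventions, since an even permutation of $a,b,c$ preserves the orientation of the triple and an odd permutation reverses it, matching the substitutions used to express the non-canonical variables. For each non-edge $ab$, Lemma~\ref{L:extobs} supplies a half-plane bounded by $\overleftrightarrow{ab}$ meeting every $a,b$-path of $G$ in an internal vertex; I would set $s_{ab}$ \emph{true} if the open half-plane of points $p$ with $abp$ clockwise has this property, and \emph{false} otherwise (in which case, by Lemma~\ref{L:extobs}, the counter-clockwise half-plane has it). Because the clockwise half-plane of $\overleftrightarrow{ab}$ is exactly the counter-clockwise half-plane of $\overleftrightarrow{ba}$, this assignment is compatible with representing $s_{ba}$ by $\neg s_{ab}$.

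The verification then splits by clause type. Clauses of $4$-Point type~\eqref{Cl:4pt} and of $5$-Point type~\eqref{Cl:5pt1}--\eqref{Cl:5pt2} are satisfied because Lemmas~\ref{L:4pt} and~\ref{L:5pt} hold for every point configuration with no three points collinear, and the assignment faithfully records the orientations of our configuration. For a clause of type~\eqref{Cl:extobs1} or~\eqref{Cl:extobs2} arising from an $a,b$-path $P=a,s,t,\dots,u,b$, the defining property of the chosen half-plane gives an internal vertex $w$ of $P$ lying in it. If $s_{ab}$ is true, that half-plane is the clockwise side of $\overleftrightarrow{ab}$, so $x_{abw}$ is true and satisfies~\eqref{Cl:extobs1}, while~\eqref{Cl:extobs2} is satisfied by the literal $s_{ab}$; if $s_{ab}$ is false, the roles reverse, $w$ lies on the counter-clockwise side so $\neg x_{abw}$ satisfies~\eqref{Cl:extobs2}, and $\neg s_{ab}$ satisfies~\eqref{Cl:extobs1}. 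When $a$ and $b$ lie in different components of $G$ there are no $a,b$-paths, hence no clauses of these two forms for the pair, and $s_{ab}$ may be chosen arbitrarily.

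I do not expect a genuine obstacle here: the argument is essentially a dictionary between Lemmas~\ref{L:4pt}, \ref{L:5pt}, and~\ref{L:extobs} and the clauses~\eqref{Cl:4pt}--\eqref{Cl:extobs2}. The one point that deserves care is that Lemma~\ref{L:extobs} furnishes a \emph{single} half-plane that simultaneously serves all $a,b$-paths — it is precisely this uniformity that makes one Boolean variable $s_{ab}$ adequate — together with the routine bookkeeping that the orientation-based values of the $x$-variables are consistent across the six permutations of each triple and under the canonical substitutions.
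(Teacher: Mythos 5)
Your proof is correct and follows exactly the route the paper intends: read off the orientation variables $x_{abc}$ from a perturbed drawing witnessing $\obsout(G)\le 1$, set each $s_{ab}$ according to the half-plane supplied by Lemma~\ref{L:extobs}, and check that Lemmas~\ref{L:4pt}, \ref{L:5pt}, and \ref{L:extobs} make every clause true. The paper treats this as immediate from the construction of the instance and gives no separate proof; your write-up simply makes that dictionary explicit, including the edge cases (complete graphs, disconnected pairs) the paper leaves tacit.
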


Note that if all that is known is that the SAT instance is satisfiable,
then we have no definitive information about $\obsout(G)$,
although the satisfiability of the instance may
provide
us with a starting point for finding---by hand---a useful embedding.

Using these ideas,
we can determine the exact value
of the outside obstacle number for some special planar graphs.

Following \cite{Joh66}, for $n\ge 3$
we define the \emph{gyroelongated $n$-bipyramid}
to be a convex polyhedron formed by adding pyramids
to the top and bottom base of the $n$-antiprism;
see Figure~\ref{F:gyroEx}.
The gyroelongated square bipyramid,
when constructed using equilateral triangles,
is also known as the Johnson solid $J_{17}$.
The gyroelongated pentagonal bipyramid,
again when constructed of equilateral triangles,
is the regular icosahedron.
We denote the skeleton of the gyroelongated $n$-bipyramid by $X_{n}$.
We also refer to $X_{5}$ as $I$,
since it is the icosahedron.

The graph $X_n$
can be constructed as two disjoint $n$-wheels,
connected by a $2n$-cycle that alternates
between vertices of the wheel boundaries taken cyclically.
Figures~\ref{F:gyroEx}, % Fig 6
\ref{F:IcosPix}, % Fig 7
\ref{F:gyroDipyramids}, % Fig 8
and
\ref{F:Gen-3-Icos} % Fig 10
show gyroelongated $n$-bipyramids for various values of $n$,
while
Figure~\ref{F:gyroGeneral} % Fig 9
illustrates the general case.
In each of these figures,
the wheel boundaries are are labeled with consecutive numbers and are shown in green,
the spokes of the wheel in blue,
and the connecting cycle in black; the cycle connecting the two wheel boundaries correspond to the sequence of labeled vertices $1,\blue{1},2,\blue{2},\ldots,n,\blue{n}$.
Non-edges are shown with thin gray or pink lines.

\begin{figure}[htbp]
\begin{center}
\ffigbox{
\begin{subfloatrow*}[3]
\subfloat[$X_{3}$]{  \includegraphics[width=.2\linewidth]{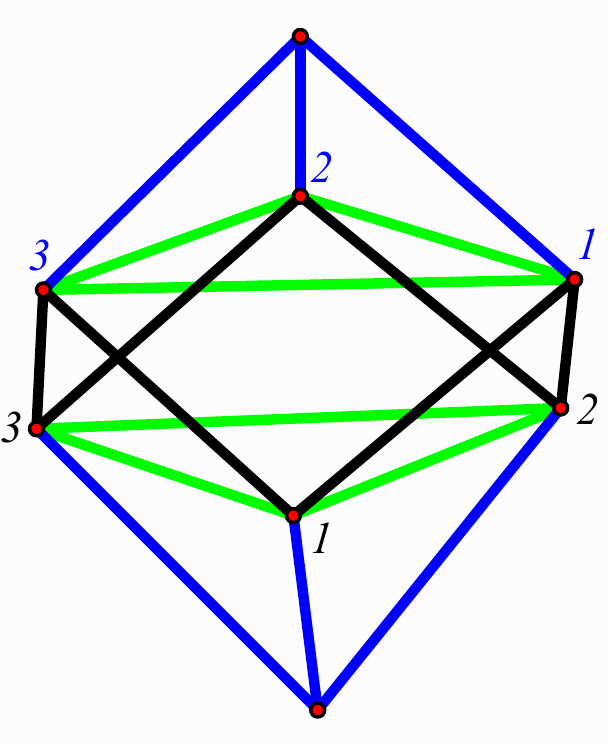}
 }
\subfloat[$X_{4}$]{  \includegraphics[width=.2\linewidth]{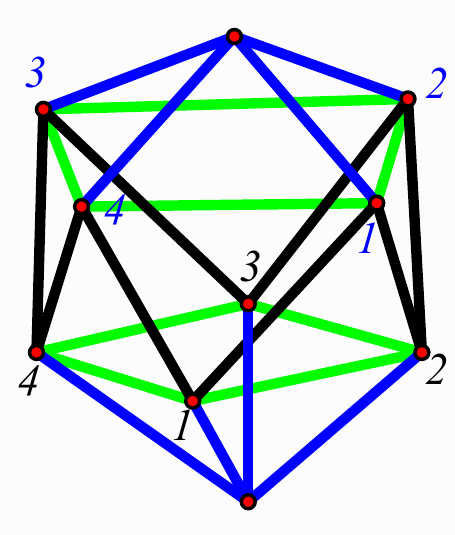}
 }
\subfloat[$X_{5} = I$]{ \includegraphics[width=.2\linewidth]{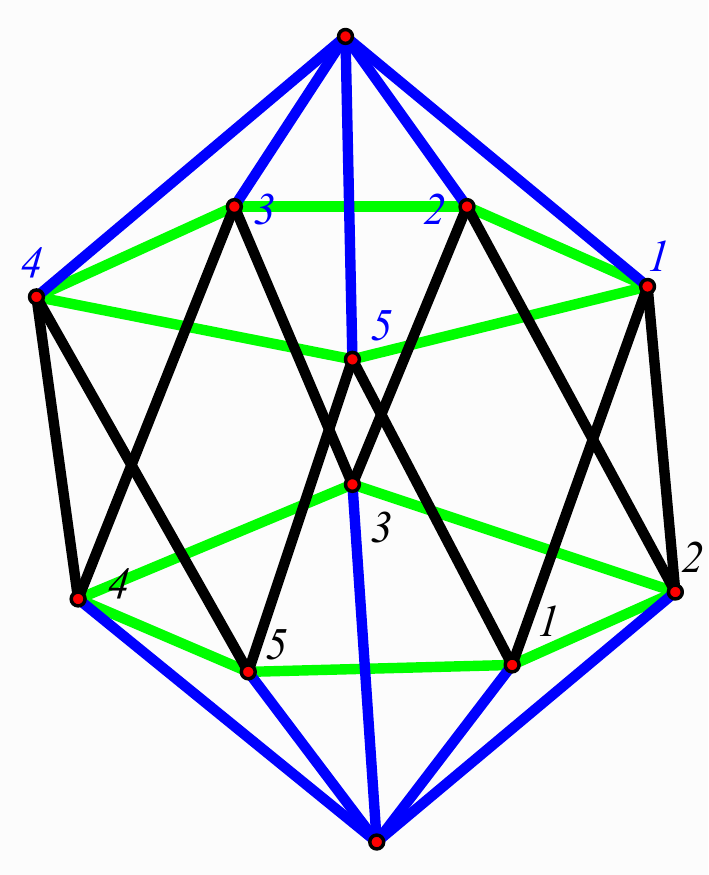}
  }
\end{subfloatrow*}
}{
\caption{Gyroelongated $n$-bipyramid skeleta; the antiprisms are highlighted in black and green, while the pyramids erected on the bases are shown in blue.}
\label{F:gyroEx}}
\end{center}
\end{figure}

\begin{proposition} \label{P:gesdp-obsout}
All of the following hold.
\be
\item $\obsout(X_{4}) = 2$.
\item $\obsout(I)= \obsout(X_{5}) = 2$.
\item $\obsout(X_{6}) = 2$.
\ee
\end{proposition}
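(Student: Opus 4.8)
The plan is to prove the upper and lower bounds separately for each of $X_4$, $X_5=I$, and $X_6$. For the upper bounds $\obsout(X_n)\le 2$, since each $X_n$ is planar it has obstacle number at most... well, that is exactly what is in question, so instead I would exhibit an explicit straight-line drawing of each graph together with two obstacles, one of which is an outside obstacle. The natural drawing to try is the one suggested by the combinatorial description of $X_n$ as two $n$-wheels joined by a $2n$-cycle: place one wheel (hub plus rim) in a small disk, draw the second wheel's rim as a large surrounding near-circle with its hub placed outside everything, route the connecting $2n$-cycle as a zigzag between the two rims, and then check that the non-edges fall into two groups, one blockable by a bounded obstacle sitting in the inner region and one blockable by the unbounded outside obstacle. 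I expect this to be a finite, if slightly fiddly, case check for each of $n=4,5,6$, and the figures referenced in the surrounding text (Figures~\ref{F:IcosPix}, \ref{F:gyroDipyramids}, etc.) presumably display exactly these drawings.

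For the lower bounds $\obsout(X_n)\ge 2$, I would invoke the SAT machinery developed in this section. By Observation~\ref{O:sat-obsout}, if $\obsout(X_n)\le 1$ then the SAT instance consisting of all clauses of the forms \eqref{Cl:5pt1}--\eqref{Cl:extobs2} for the graph $X_n$ is satisfiable; these clauses are the $4$-point rule, the $5$-point rule, and the clauses \eqref{Cl:extobs1}--\eqref{Cl:extobs2} coming from Lemma~\ref{L:extobs} applied to every non-edge $ab$ and every $a,b$-path $P$ in $X_n$. So it suffices to run a SAT solver on each of these three instances and verify that it reports UNSAT; this is the ``computer-assisted proof'' the paper advertises. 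The one genuine subtlety here is that $X_n$ has many non-edges and, for each, potentially very many $a,b$-paths, so the family of path-clauses \eqref{Cl:extobs1}--\eqref{Cl:extobs2} is in principle enormous. The right move is to observe that Lemma~\ref{L:extobs} is still a valid necessary condition if we include the clauses only for a well-chosen \emph{subset} of the $a,b$-paths; in particular it is enough to use short paths (say all induced paths of length $2$ and $3$, or just enough paths to force the contradiction), and since smaller instances that are already UNSAT certify the bound, we lose nothing by restricting. So concretely the step is: generate the $4$-point and $5$-point clauses on the $10$, $12$, resp.\ $14$ vertices, add \eqref{Cl:extobs1}--\eqref{Cl:extobs2} for each non-edge using its short paths, and confirm UNSAT.

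The main obstacle I anticipate is not the logic but the bookkeeping: making sure the three SAT instances faithfully encode $X_n$ (correct vertex labels, correct edge/non-edge partition via the two-wheels-plus-$2n$-cycle description, canonical variables chosen consistently as in the discussion preceding Observation~\ref{O:sat-obsout}) and that the solver's UNSAT verdict is checked with a verifiable proof trace (e.g.\ a DRAT certificate) rather than taken on faith. A secondary point worth addressing in the write-up is why these particular $n$ and not, say, $n=3$: for small $n$ the SAT instance will turn out satisfiable (consistent with $X_3$ being small enough that Proposition~\ref{P:ord5}-type reasoning or a direct drawing gives $\obsout(X_3)\le 1$), so the proof should note that the UNSAT phenomenon genuinely begins at $n=4$. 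Assembling the two halves, $\obsout(X_n)\ge 2$ from UNSAT and $\obsout(X_n)\le 2$ from the explicit two-obstacle drawing, gives $\obsout(X_n)=2$ for $n\in\{4,5,6\}$, which is the three items of the proposition.
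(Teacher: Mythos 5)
Your proposal matches the paper's proof essentially exactly: the lower bounds come from running SAT solvers on the instances built from the $4$-Point Rule, the $5$-Point Rule, and Lemma~\ref{L:extobs} and verifying UNSAT (invoking Observation~\ref{O:sat-obsout}), and the upper bounds come from the explicit two-obstacle drawings in Figures~\ref{F:IcosPix} and~\ref{F:gyroDipyramids}. Your added remark that UNSAT on a sub-instance (using only a subset of the $a,b$-paths) already certifies the lower bound is correct and a reasonable practical refinement, but it does not change the argument.
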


\begin{proof}
The lower bounds were found using a computer.
Two of the authors independently
created SAT instances as described above,
using clauses derived from Lemmas~\ref{L:4pt} (the $4$-Point Rule),
\ref{L:5pt} (the $5$-Point Rule),
and \ref{L:extobs},
as discussed above.
See~\cite{Cha17} for software to generate the SAT instances.
For each graph,
standard SAT solvers
(we used MiniSat \cite{MiniSat}, PicoSAT \cite{PicoSAT},
and zChaff \cite{zChaff})
indicate that
the SAT instance is not satisfiable.
Thus, by Observation~\ref{O:sat-obsout},
we have $\obsout \ge 2$ for each graph.

For the upper bounds,
we exhibit an obstacle representation of each graph
using two obstacles, one of which is an outside obstacle.
Figure~\ref{F:IcosPix} shows embeddings of the icosahedron,
while Figure~\ref{F:gyroDipyramids} shows embeddings of $X_{4}$ and $X_{6}$.
\end{proof}

Note that Figure~\ref{Fp:IcosSym} exhibits $3$-fold dihedral symmetry,
while the other representations have only one vertical mirror of symmetry.

\begin{figure}[htbp]
\begin{center}
\ffigbox{
\begin{subfloatrow*}
\subfloat[]{\label{Fp:IcosSym}\includegraphics[width=.6\linewidth]{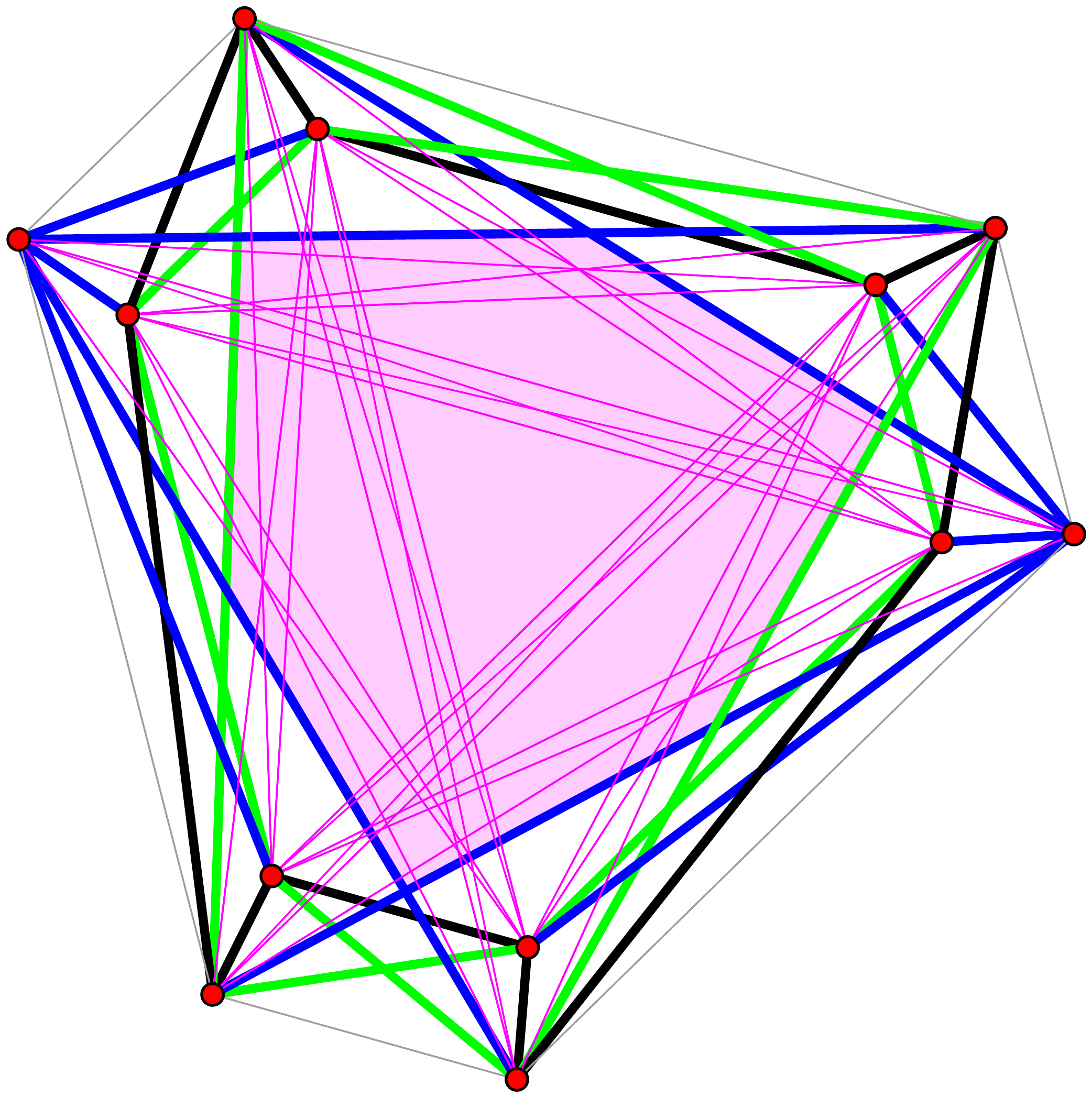}}
\end{subfloatrow*}
\begin{subfloatrow*}
\subfloat[]{\label{Fp:IcosGyro}\includegraphics[width=.8\linewidth]{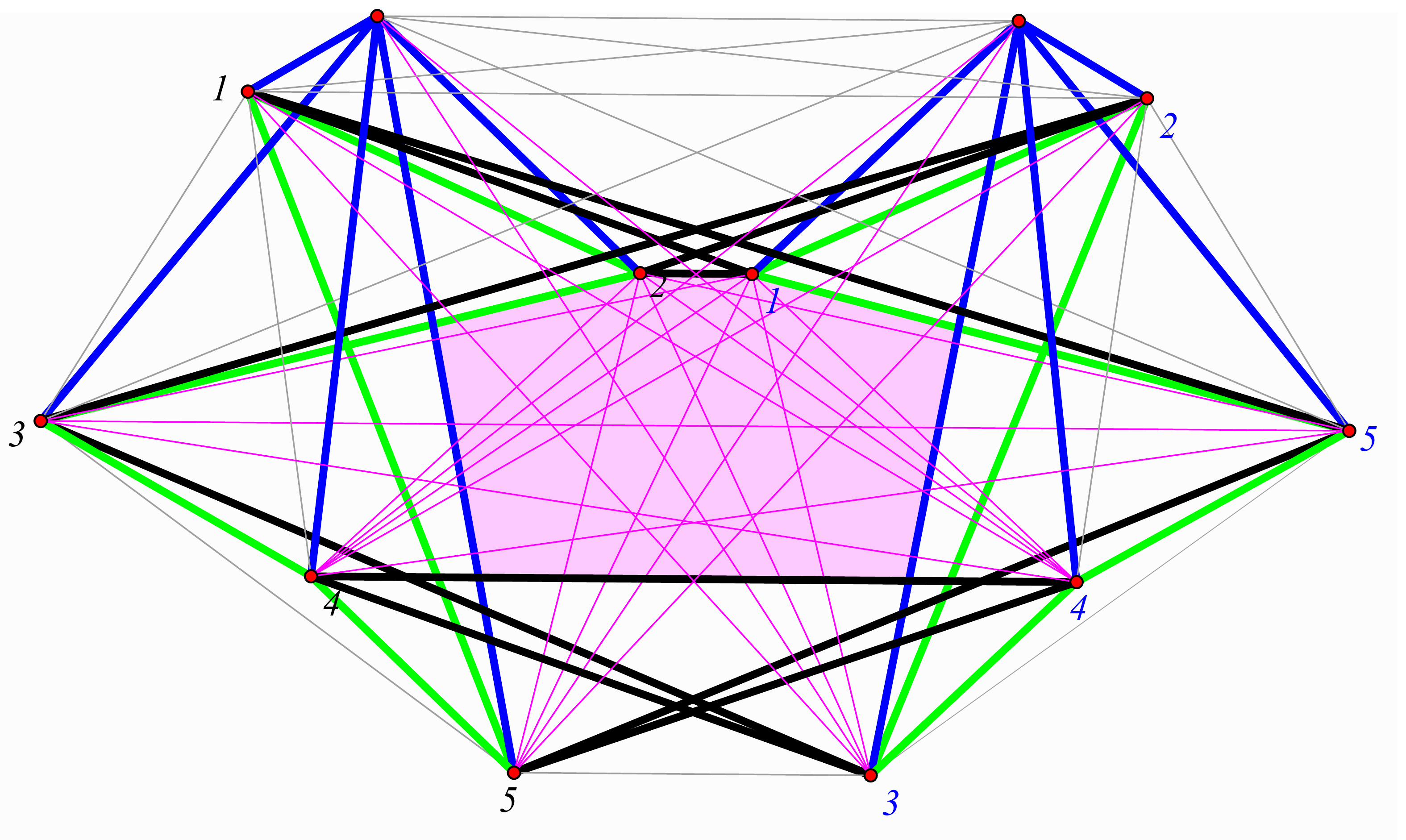}}
\end{subfloatrow*}}{
\caption{Two $2$-obstacle embeddings of the icosahedron.
The interior obstacle is highlighted in pale magenta,
and non-edges blocked by that obstacle are shown with thin magenta lines.
The other obstacle is the outside obstacle,
and non-edges blocked by that obstacle are shown with thin gray lines.}
\label{F:IcosPix}}
\end{center}
\end{figure}

\begin{figure}[htbp]
\begin{center}
\ffigbox{
\begin{subfloatrow*}
\subfloat[$X_{4}$]{\label{Fp:gyroQuad}\includegraphics[width=.8\linewidth]{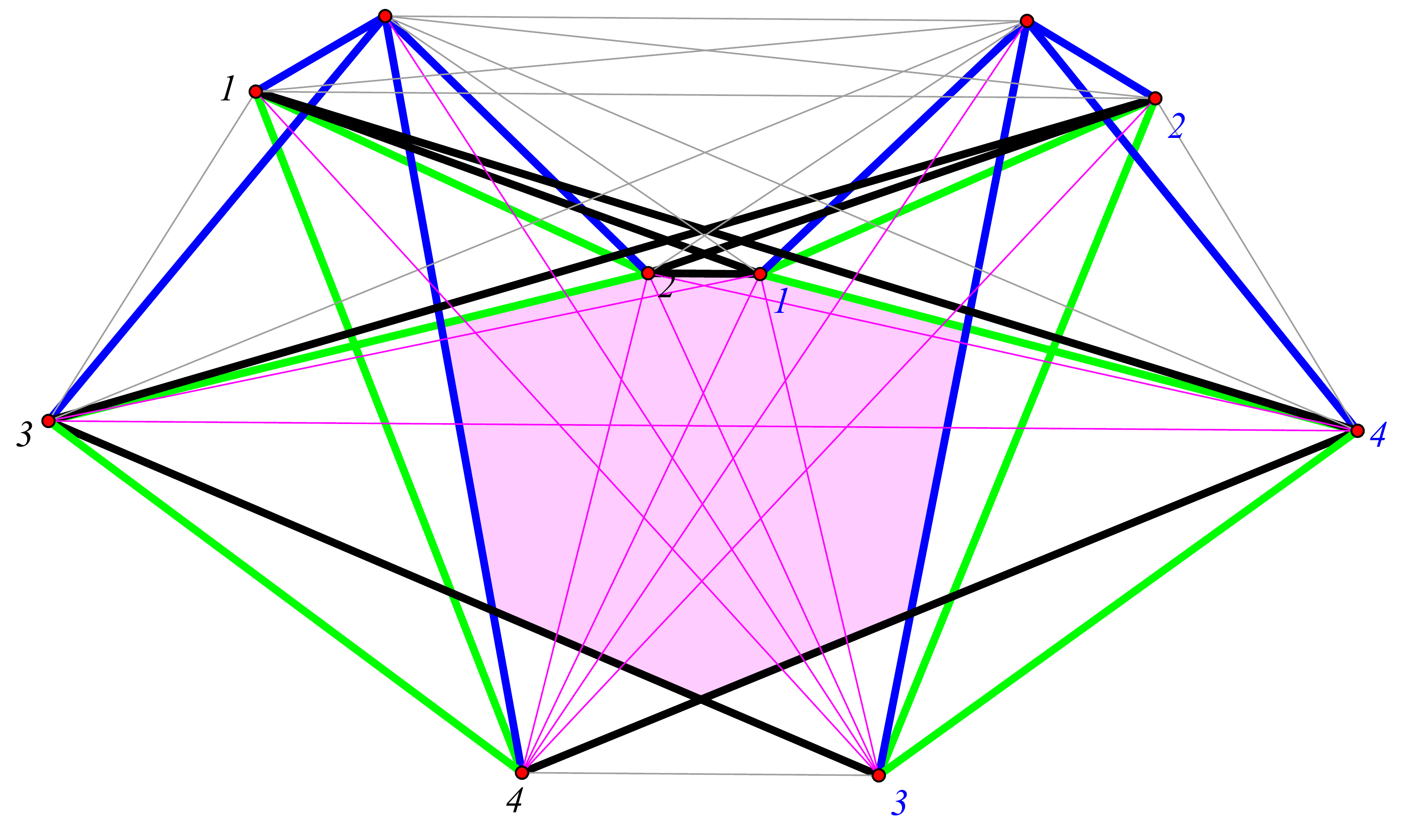}}
\end{subfloatrow*}
\begin{subfloatrow*}
\subfloat[$X_{6}$]{\label{Fp:gyroHex}\includegraphics[width=.8\linewidth]{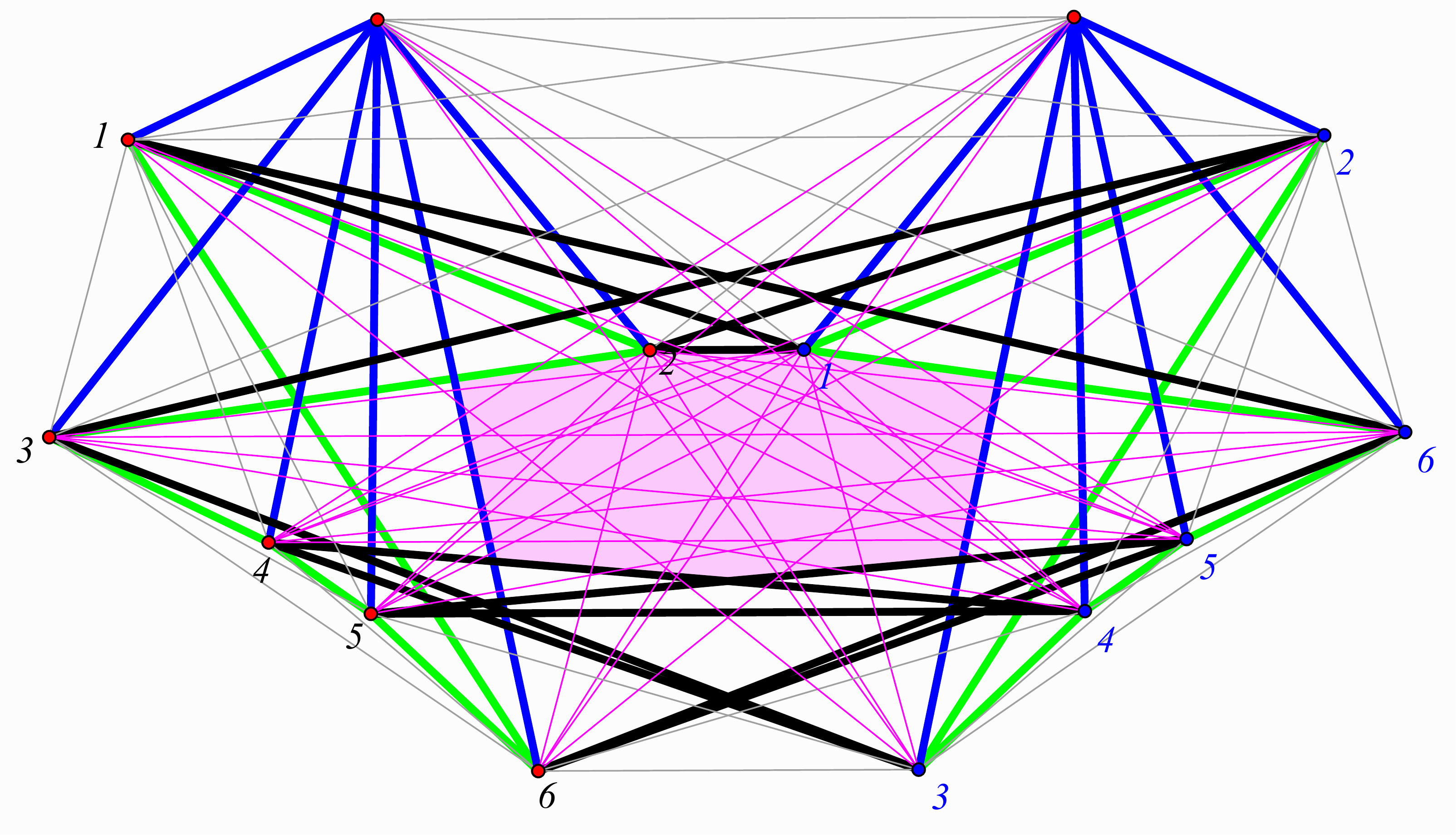}}
\end{subfloatrow*}}{
\caption{Two-obstacle embeddings of gyroelongated $n$-bipyramids; $n = 4,6$.}
\label{F:gyroDipyramids}}
\end{center}
\end{figure}

Figure~\ref{F:gyroGeneral} shows how the embeddings
in Figures~\ref{Fp:IcosGyro} and \ref{F:gyroDipyramids}
can be generalized to a $2$-obstacle embedding of $X_n$
for arbitrarily large $n$.
Thus we observe the following.

\begin{theorem}
Let $n\ge 3$.
Then $\obsout(X_{n}) \leq 2$.
\end{theorem}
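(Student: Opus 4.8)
The plan is to exhibit, for every $n\ge 3$, an explicit straight-line drawing of $X_n$ together with two obstacles, \emph{both of which can be taken to include an outside obstacle}, that blocks all non-edges; since $\obsout(X_n)$ permits two obstacles as long as one is an outside obstacle, this establishes the upper bound. The figures already constructed for $n=4,5,6$ (Figures~\ref{Fp:IcosGyro} and \ref{F:gyroDipyramids}) are the template, and Figure~\ref{F:gyroGeneral} shows the general pattern; the task is to argue that this pattern genuinely works for arbitrary $n$.

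First I would fix the combinatorial description of $X_n$ from the excerpt: two $n$-wheels whose rims are joined by a $2n$-cycle alternating between the two rims in the cyclic order $1,\blue{1},2,\blue{2},\ldots,n,\blue{n}$. I would then describe the drawing concretely. Following the referenced figures, I would place the two wheel hubs and arrange the two rims as two nested (near-)concentric convex polygons, with the alternating $2n$-cycle drawn so that the antiprism ``band'' between the rims is a convex annular region. The key design principle is that the drawing should have the non-edges naturally separated into two groups: those that can be reached from the unbounded face, and those confined to a single interior region. I would set up coordinates (for instance, the outer rim on a large circle, the inner rim on a smaller circle rotated by a half-step, and the two apexes placed along the central axis) so that this separation is explicit and scales uniformly in $n$.

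The main verification, and the step I expect to be the real obstacle, is a careful case analysis of the non-edges of $X_n$ to confirm that two obstacles suffice uniformly in $n$. The non-edges fall into predictable families: non-edges within each wheel (between non-consecutive rim vertices, and the absent hub-to-far-rim pairs are in fact edges, so one must catalog carefully), non-edges between the two rims that are not part of the alternating antiprism cycle, and the single non-edge between the two apexes. I would argue that one outside obstacle, placed in the unbounded face, can be ``fingered'' inward to cross every non-edge that is visible from outside, exactly as in the $n=5,6$ pictures; the challenge is to prove that the relevant segments really do admit a common connected region in the complement of the drawing, which amounts to showing that no edge of $X_n$ separates two such non-edges in a way that would force a third obstacle. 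The remaining non-edges (those trapped in the central region near the axis, including the apex-apex non-edge) are then blocked by the second obstacle. I would make the scaling argument rigorous by noting that the drawing is combinatorially identical for all $n$ up to the repetition of one ``sector'' of the antiprism band, so that a blocking region verified on a single sector extends by symmetry; this reduces the infinite family to a bounded check plus an induction-free periodicity argument, which is the cleanest way to avoid an $n$-dependent computation.

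Finally, I would record that the second obstacle is an interior obstacle while the first is an outside obstacle, so the representation meets the definition of $\obsout$ with two obstacles, giving $\obsout(X_n)\le 2$ for all $n\ge 3$ and completing the proof. The lower bounds proved in Proposition~\ref{P:gesdp-obsout} for $n=4,5,6$ then show the bound is tight for those cases, though the theorem as stated asserts only the upper bound.
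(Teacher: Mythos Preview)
Your proposal takes essentially the same approach as the paper: the paper's entire argument is the sentence ``Figure~\ref{F:gyroGeneral} shows how the embeddings in Figures~\ref{Fp:IcosGyro} and~\ref{F:gyroDipyramids} can be generalized to a $2$-obstacle embedding of $X_n$ for arbitrarily large $n$,'' with no further text, so your plan to describe that drawing and verify by a sector-by-sector case analysis is exactly an elaboration of the paper's proof-by-picture. One small slip to fix in your cataloguing: the hub of one wheel is \emph{not} adjacent to the rim of the other wheel, so those hub--far-rim pairs are genuine non-edges (your parenthetical suggests otherwise); be sure they are covered by your two obstacles.
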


\begin{figure}[htbp]
\begin{center}
\includegraphics[width=\linewidth]{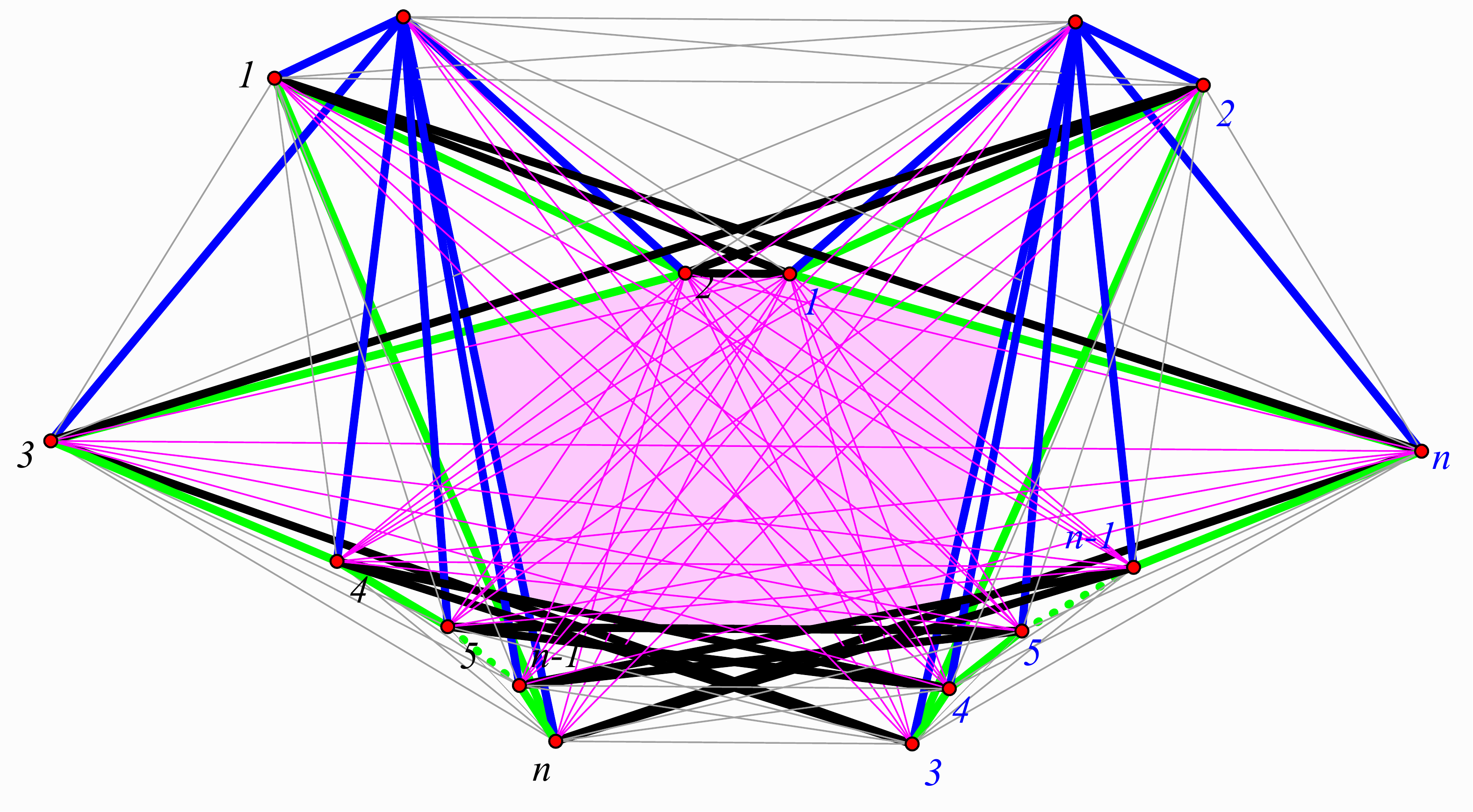}
\caption{A two-obstacle embedding of a gyroelogated $n$-bipyramid.}
\label{F:gyroGeneral}
\end{center}
\end{figure}

For the gyroelongated $3$-bipyramid
we have $\obsout(X_{3}) = 1$
(see Figure~\ref{F:Gen-3-Icos}),
while for $n = 4,5,6$ we have $\obsout(X_n) = 2$
by Proposition~\ref{P:gesdp-obsout}.
It seems likely that, for all larger values of $n$,
the graph $X_n$ cannot be represented without an interior obstacle;
however the corresponding SAT instances are too large
for our computational methods to be feasible.

\begin{conjecture} If $n \geq 4$, then $\obsout(X_{n}) = 2$.\end{conjecture}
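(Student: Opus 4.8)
Toward this conjecture, the plan is as follows. Since the upper bound $\obsout(X_n)\le 2$ is already established, and $\obsout(X_n)=2$ for $n=4,5,6$ by Proposition~\ref{P:gesdp-obsout}, the only thing left to prove is $\obsout(X_n)\ge 2$ for $n\ge 7$, exactly the range in which the SAT instances of Section~\ref{S:obsout} are too large to resolve by computer. First I would recast the problem in the form used in the proof of Lemma~\ref{L:extobs}: assuming (by Observation~\ref{O:perturb}) a general-position straight-line drawing of $X_n$ with no interior obstacle, it suffices to exhibit one non-edge $ab$ and two $a,b$-paths $P_1,P_2$ whose interiors lie strictly on opposite sides of $\overleftrightarrow{ab}$, since then $P_1\cup P_2$ encloses the open segment $ab$ in a bounded complementary region that no outside obstacle can reach. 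Thus the conjecture reduces to the drawing-theoretic assertion that \emph{every} general-position straight-line drawing of $X_n$ has such a ``two-sided'' non-edge. Note that $X_n$ is planar (it is the skeleton of a polytope), so planarity-type obstructions are unavailable; and since $X_4,X_5,X_6$ are not induced subgraphs of $X_n$ for larger $n$, Proposition~\ref{P:closed-induced} gives no monotonicity in $n$, so a genuinely new argument is required.

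\emph{Route A (a bounded obstruction).} A window $\{u,v,a_1,b_1,\dots,a_k,b_k\}$ of the connecting cycle induces in $X_n$, for every $n\ge k+2$, the same fixed graph $Y_k$: two ``fans'' (hub $u$ joined to the path $a_1\cdots a_k$, hub $v$ joined to the path $b_1\cdots b_k$) linked by the zigzag edges $a_ib_i$ and $a_{i+1}b_i$, with $uv$, the $ub_j$, and the $va_i$ all non-edges. If $\obsout(Y_k)\ge 2$ for some fixed $k$, then by Proposition~\ref{P:closed-induced} the conjecture holds for all $n\ge k+2$, with the finitely many remaining $n$ covered by Proposition~\ref{P:gesdp-obsout} (plus, if necessary, a few more SAT checks). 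The $Y_k$ instances are far smaller than those for $X_n$, so I would first search for such a $k$ and, if one is found, read a short human proof off the unsatisfiable core, in the style of the case analyses of Section~\ref{S:small}.

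\emph{Route B (a direct argument).} If no bounded obstruction exists, one works with $X_n$ itself. For the non-edge $uv$, let $H$ be its special half-plane and record for each rim vertex whether it lies in $H$; the length-$3$ paths $u\,a_i\,b_i\,v$, $u\,a_i\,b_{i-1}\,v$, and $u\,a_{i+1}\,b_i\,v$ force that $a_i\notin H$ implies $b_{i-1},b_i\in H$, and $b_j\notin H$ implies $a_j,a_{j+1}\in H$. Feeding in the further non-edges $a_ia_{i+2}$, $ub_j$, $va_i$, $a_{j+2}b_j$ — each of which has exactly two common neighbours and therefore forces two specified vertices to the same side of a line — together with the $4$-Point and $5$-Point Rules (Lemmas~\ref{L:4pt},~\ref{L:5pt}) should over-constrain the cyclic pattern of side-assignments around the two wheels; exploiting the $2n$-fold near-periodicity of $X_n$, the resulting local clash should recur for every large $n$.

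\emph{Main obstacle.} The difficulty is precisely that the unsatisfiable cores grow with $n$. Route A may fail outright — it is quite possible that $\obsout(Y_k)=1$ for every $k$ — and Route B must contend with the exponentially many long $a,b$-paths that Lemma~\ref{L:extobs} also constrains. I expect the decisive step to be a lemma pinning down, for some carefully chosen non-edge, the set of possible side-patterns of its short-path interiors to a short list, after which a pigeonhole over the $n$ windows would finish; selecting the right non-edge, and the right auxiliary lines whose orientations are forced by the wheel structure and the oriented-matroid axioms, is the crux.
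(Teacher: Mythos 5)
You have not proved the statement, and neither does the paper: this is stated as a conjecture precisely because the authors' SAT-based method for the lower bound becomes computationally infeasible beyond $n=6$. Your framing of what remains is accurate --- the upper bound holds for all $n\ge 3$ via the general two-obstacle embedding, the cases $n=4,5,6$ are settled by Proposition~\ref{P:gesdp-obsout}, and the open content is exactly $\obsout(X_n)\ge 2$ for $n\ge 7$; your reduction via Lemma~\ref{L:extobs} to exhibiting a non-edge $ab$ with two $a,b$-paths on opposite sides of $\overleftrightarrow{ab}$ is also the right way to pose it. But both of your routes stop short of an argument. Route A hinges on the unverified claim that $\obsout(Y_k)\ge 2$ for some fixed window graph $Y_k$ --- the induced-subgraph reduction via Proposition~\ref{P:closed-induced} is valid and this is a genuinely good idea (a single finite check would dispose of all large $n$), but you concede yourself that $Y_k$ may well have outside obstacle number $1$ for every $k$; until that computation or a hand proof is done, nothing is established. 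Route B derives correct local implications from length-$3$ paths and two-common-neighbour non-edges, but these are exactly the kinds of clauses already present in the authors' SAT instances; the ``decisive lemma'' that would turn them into a contradiction uniform in $n$ is named but never supplied, and the exponentially many longer $a,b$-paths that Lemma~\ref{L:extobs} also constrains are acknowledged but not handled.

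In short: what you have written is a research plan with an honestly identified gap, not a proof. The one concrete, checkable step you could take to make partial progress --- running the (fixed-size) SAT instance for $Y_k$ for small $k$ and extracting a human-readable unsatisfiable core if one exists --- is proposed but not carried out, so the conjecture remains exactly as open as the paper leaves it.
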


\begin{figure}[htbp]
\begin{center}
\ffigbox{
\begin{subfloatrow*}
\subfloat[]{\includegraphics[width=.4\linewidth]{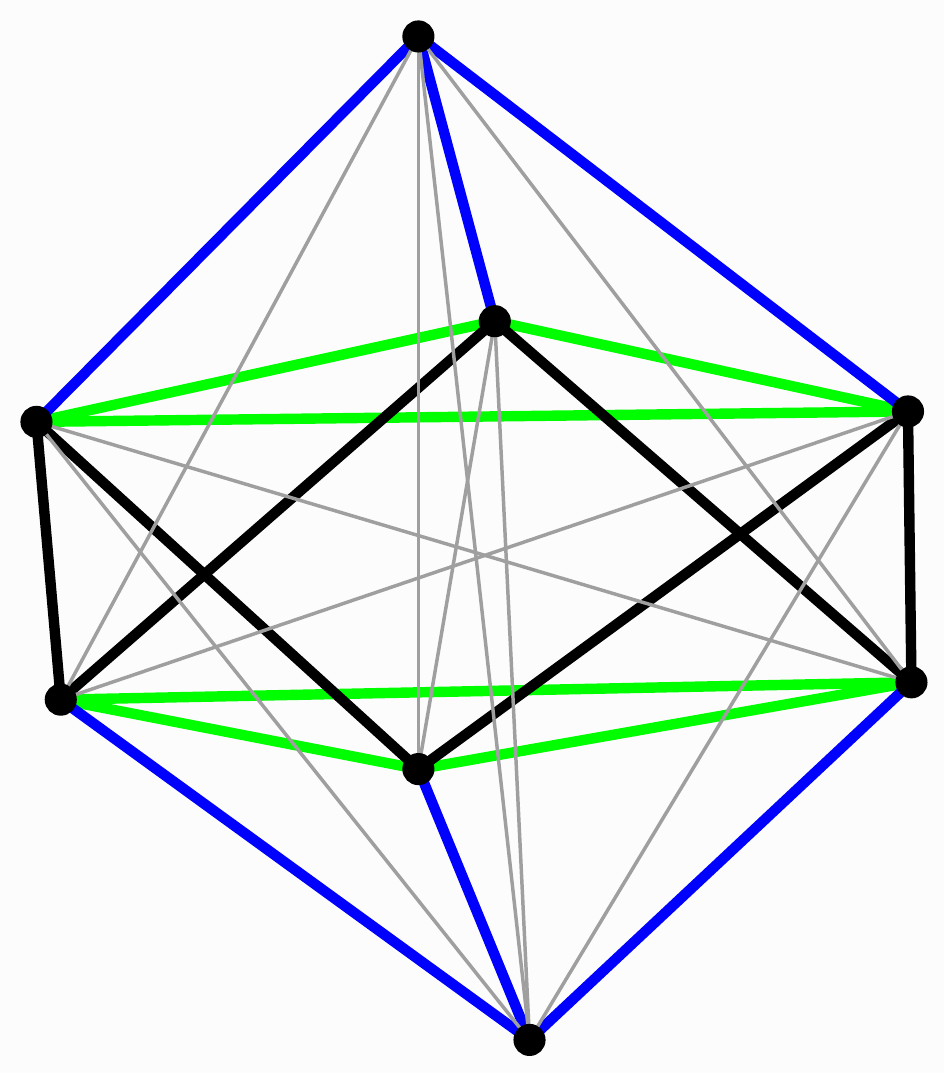}}
\subfloat[]{\includegraphics[width=.4\linewidth]{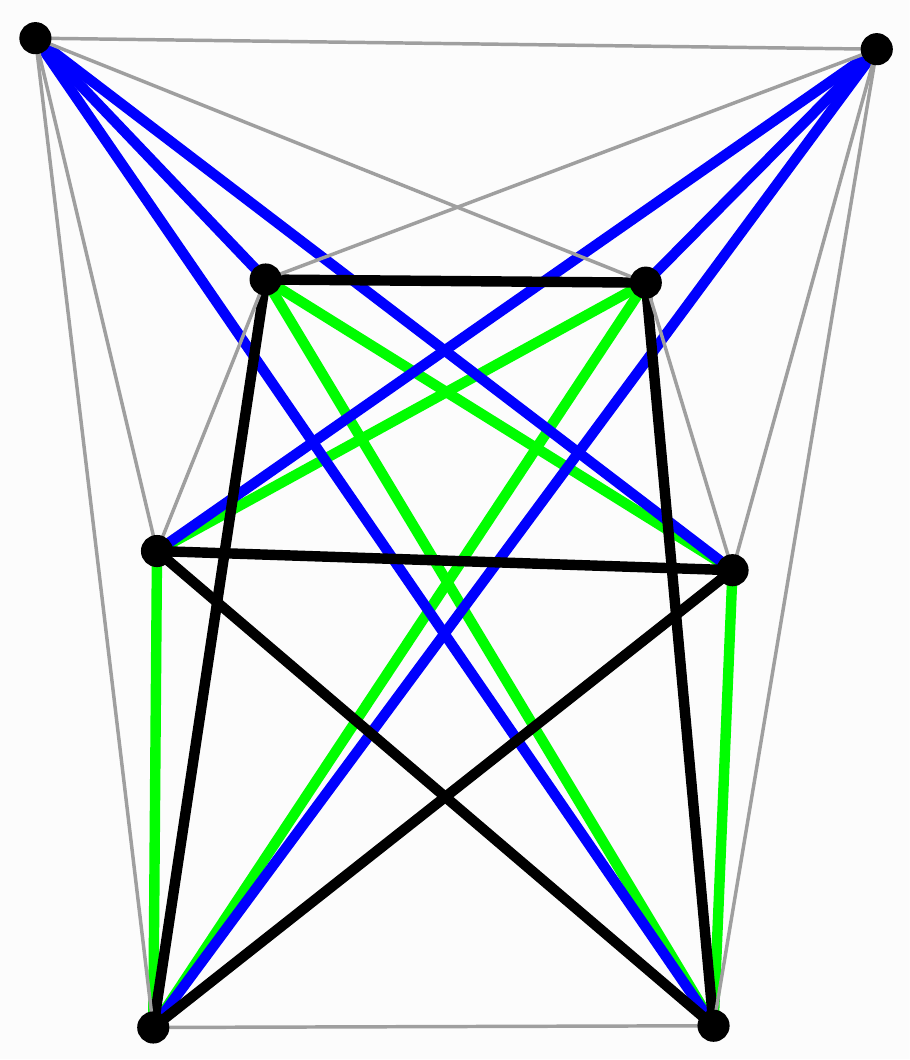}}
\end{subfloatrow*}}{
\caption{(A) The gyroelongated $3$-bipyramid $X_{3}$ and (B) a one-obstacle embedding.}
\label{F:Gen-3-Icos}}
\end{center}
\end{figure}

\section{Obstacle Number, Part I} \label{S:obs1}

We have shown that there exist planar graphs with
outside obstacle number greater than $1$.
We now turn our attention to the ordinary obstacle number.

Knowing that certain graphs require an interior obstacle
allows us to reason about the obstacle number of a larger graph;
in particular this will allow us to argue that certain planar graphs
have obstacle number greater than or equal to $2$.

We denote the disjoint union of graphs $G_1$, $G_2$
by $G_1\cupdotop G_2$.

\begin{lemma} \label{L:disjun}
Let $G_1$ and $G_2$ be graphs
such that
$\obsout(G_1) > 1$ and $\obsout(G_2) > 1$.
Then $\obs(G_1\cupdotop G_2) > 1$.\end{lemma}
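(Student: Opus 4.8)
The plan is to argue by contradiction. Suppose $\obs(G_1 \cupdotop G_2) \le 1$. Since $\obsout(G_i) > 1$, each $G_i$ is nonempty and non-complete, so $G_1 \cupdotop G_2$ is disconnected, hence not complete, and therefore $\obs(G_1 \cupdotop G_2) = 1$. Fix a representation with a single obstacle $T$, write $V_i$ for the vertex set of $G_i$, and let $D(\cdot)$ denote the straight-line drawing of a subgraph in this fixed representation. Two facts will be used throughout: $D(G_i) \subseteq \mathrm{conv}(V_i)$, and $T$ (which is connected and nonempty) is disjoint from the drawing of $G_1 \cupdotop G_2$, in particular from every vertex point.

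First I would dispose of the ``outside'' case. If $T$ is not contained in $\mathrm{conv}(V_1)$, then, being connected and disjoint from $D(G_1) \subseteq \mathrm{conv}(V_1)$ and containing a point outside the compact convex set $\mathrm{conv}(V_1)$ (from which one can escape to infinity without meeting $D(G_1)$), $T$ lies in the unbounded component of $\RR^2 \setminus D(G_1)$. Deleting the vertices of $G_2$ and keeping $T$ then yields, via Observation~\ref{O:induced}, an obstacle representation of $G_1$ with a single \emph{outside} obstacle, so $\obsout(G_1) \le 1$ --- contradiction; and symmetrically if $T \not\subseteq \mathrm{conv}(V_2)$. Hence we may assume $T \subseteq \mathrm{conv}(V_1) \cap \mathrm{conv}(V_2)$.

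The remaining step, and the crux of the argument, is to exhibit a non-edge between the two parts that no obstacle blocks. The polytope $\mathrm{conv}(V_1 \cup V_2)$ has an extreme point $a$, and $a \in V_1 \cup V_2$; by the symmetry of the statement assume $a \in V_1$. Then $a \notin \mathrm{conv}\bigl((V_1 \cup V_2) \setminus \{a\}\bigr) \supseteq \mathrm{conv}(V_2)$, so $a$ is strictly separated from the compact convex set $\mathrm{conv}(V_2)$ by a line; translating that line until it first meets $\mathrm{conv}(V_2)$ produces a supporting line $L_0$ with $\mathrm{conv}(V_2)$ in one closed half-plane and $a$ strictly in the opposite open half-plane. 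The face $L_0 \cap \mathrm{conv}(V_2)$ has an extreme point $b$, which is an extreme point of $\mathrm{conv}(V_2)$ and hence a vertex of $G_2$, i.e.\ $b \in V_2$. Every point of the segment $ab$ other than $b$ lies strictly on the $a$-side of $L_0$, so $ab$ meets $\mathrm{conv}(V_2)$ --- and therefore meets $T \subseteq \mathrm{conv}(V_2)$ --- at most at $b$; but $T$ avoids the vertex $b$, so $ab$ is disjoint from $T$. Since $a \in V_1$ and $b \in V_2$, the pair $ab$ is a non-edge of $G_1 \cupdotop G_2$, yet it meets no obstacle, contradicting the representation. This proves $\obs(G_1 \cupdotop G_2) > 1$.

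I expect the convex-geometry step of the last paragraph to be the only one requiring genuine care: the subtlety is that the argument must land on an actual \emph{vertex} of $G_2$ (so that $T$ provably avoids it and so that we are comparing against $\mathrm{conv}(V_2)$), which is why I pass to an extreme point of the exposed face $L_0 \cap \mathrm{conv}(V_2)$ rather than to, say, the point of $\mathrm{conv}(V_2)$ nearest $a$. The rest is bookkeeping: the inclusions $D(G_i) \subseteq \mathrm{conv}(V_i)$, the reduction to induced subdrawings via Observation~\ref{O:induced}, and the elementary fact that a connected obstacle with a point outside a compact convex region lies in the unbounded complementary component.
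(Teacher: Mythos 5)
Your proof is correct. The overall strategy matches the paper's --- argue by contradiction, first trap the single obstacle inside the ``interior'' of each subdrawing, then exhibit an unblocked non-edge between the two parts via an extremal argument --- but the two halves are executed differently. For the first half, the paper expands the obstacle $O$ to the maximal connected open sets $O_1\supseteq O$ and $O_2\supseteq O$ avoiding $D(G_1)$ and $D(G_2)$ respectively and invokes $\obsout(G_i)>1$ to conclude each $O_i$ is a bounded polygonal region; your containment $T\subseteq\mathrm{conv}(V_1)\cap\mathrm{conv}(V_2)$ is an equivalent reformulation of the same reduction (a component of the complement of $D(G_i)$ is bounded iff it lies in $\mathrm{conv}(V_i)$), reached by the same connectivity argument. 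The real divergence is in the second half. The paper sweeps a family of parallel closed half-planes and takes the first one $H$ meeting both $\overline{O_1}$ and $\overline{O_2}$; then $H$ misses $O$ yet contains a vertex of each $G_i$ (because a closed half-plane meeting a point of an edge segment must contain one of its endpoints), so the segment between those two vertices is an unblocked non-edge. You instead take an extreme point $a$ of $\mathrm{conv}(V_1\cup V_2)$, a supporting line of the other part's hull, and an exposed extreme point $b$ of that hull, so that $ab$ meets $\mathrm{conv}(V_2)\supseteq T$ only possibly at the vertex $b$, which $T$ cannot contain. Both arguments are sound; yours trades the sweep and the ``half-plane meeting a segment contains an endpoint'' step for the standard facts that extreme points of a finite point set's hull belong to the set and that extreme points of an exposed face are extreme points of the whole hull --- and your care in landing on an actual vertex of $G_2$ (rather than merely a point of $\mathrm{conv}(V_2)$) is exactly the point where a sloppier version of this route would fail.
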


\begin{proof}
Let $G_1$, $G_2$ be as in the statement of the result,
and
let $G=G_{1}\cupdotop G_{2}$.
Suppose for a contradiction that
we have an obstacle representation of $G$ using only
one obstacle $O$.

Based on this representation of $G$,
construct an
obstacle representation of $G_1$ by eliminating
those vertices and edges that do not lie in $G_1$
(see Observation~\ref{O:induced}).
Let $O_1$ be the largest connected open set
that contains $O$ but does not meet the drawing of $G_1$.
We may replace $O$ by $O_1$ in our obstacle representation of $G_1$.
Since $\obsout(G_1) > 1$,
this obstacle $O_1$ must be an interior obstacle.
We similarly construct an obstacle representation
of $G_2$ using a single interior obstacle $O_2$.
Note that each of $O_1$, $O_2$ is the interior of a bounded
polygonal region,
and that $O\subseteq O_1\cap O_2$.

Denote the topological closure of a subset $X$ of the plane
by $\overline{X}$.
Let $\bv{v}$ be a nonzero $2$-vector,
and consider the family $\mathcal{H}$ of parallel closed half-planes
determined by $\bv{v}$.
As we sweep this family across $G$,
beginning outside of $G$,
there will be some $H\in\mathcal{H}$
that is the first member of $\mathcal{H}$
with the property that
$H$ meets both $\overline{O_1}$ and $\overline{O_2}$.

For at least one of $O_1$, $O_2$,
half-plane $H$ meets only the boundary of the
obstacle.
Say this holds for $O_1$;
then $H\cap O_1 = \varnothing$,
and so $H\cap O = \varnothing$.

The set $\overline{O_1}$
is bounded by line segments contained in the edges of $G_1$.
Thus some vertex $p$ of $G_1$ lies in $H$.
Similarly, some vertex $q$ of $G_2$ lies in $H$.
Since $pq$ is not an edge of $G$,
the line segment $pq$
must be blocked by obstacle $O$.
However, this line segment
lies entirely in $H$, and so it does not meet $O$,
a contradiction.
\end{proof}

As a consequence, we see that
each of the graphs
$X_{4}\cupdotop X_{4}$,
$I\cupdotop I$,
and
$X_{6}\cupdotop X_{6}$
has obstacle number at least $2$ and at most $3$.
The lower bounds are a consequence of Lemma~\ref{L:disjun},
using the previously determined outside obstacle numbers
for $X_{j}$, $j = 4,5,6$
(see Proposition~\ref{P:gesdp-obsout}).
The upper bounds are established by using the previously determined
outside obstacle embeddings for $X_{j}$ and embedding
two copies of $X_{j}$ using disjoint interior obstacles
(e.g., by placing the disjoint copies far apart).

However, we can show that each of these graphs
actually has obstacle number $2$.
Figure~\ref{F:twoObsDisjoint} shows a two-obstacle embedding of
$X_{4}\cupdotop X_{4}$,
found by grouping certain related vertices
in the $2$-obstacle embedding of $X_{4}$.
The other two graphs can be drawn similarly,
and we have the following.

\begin{figure}[htbp]
\begin{center}
\includegraphics[width=\linewidth]{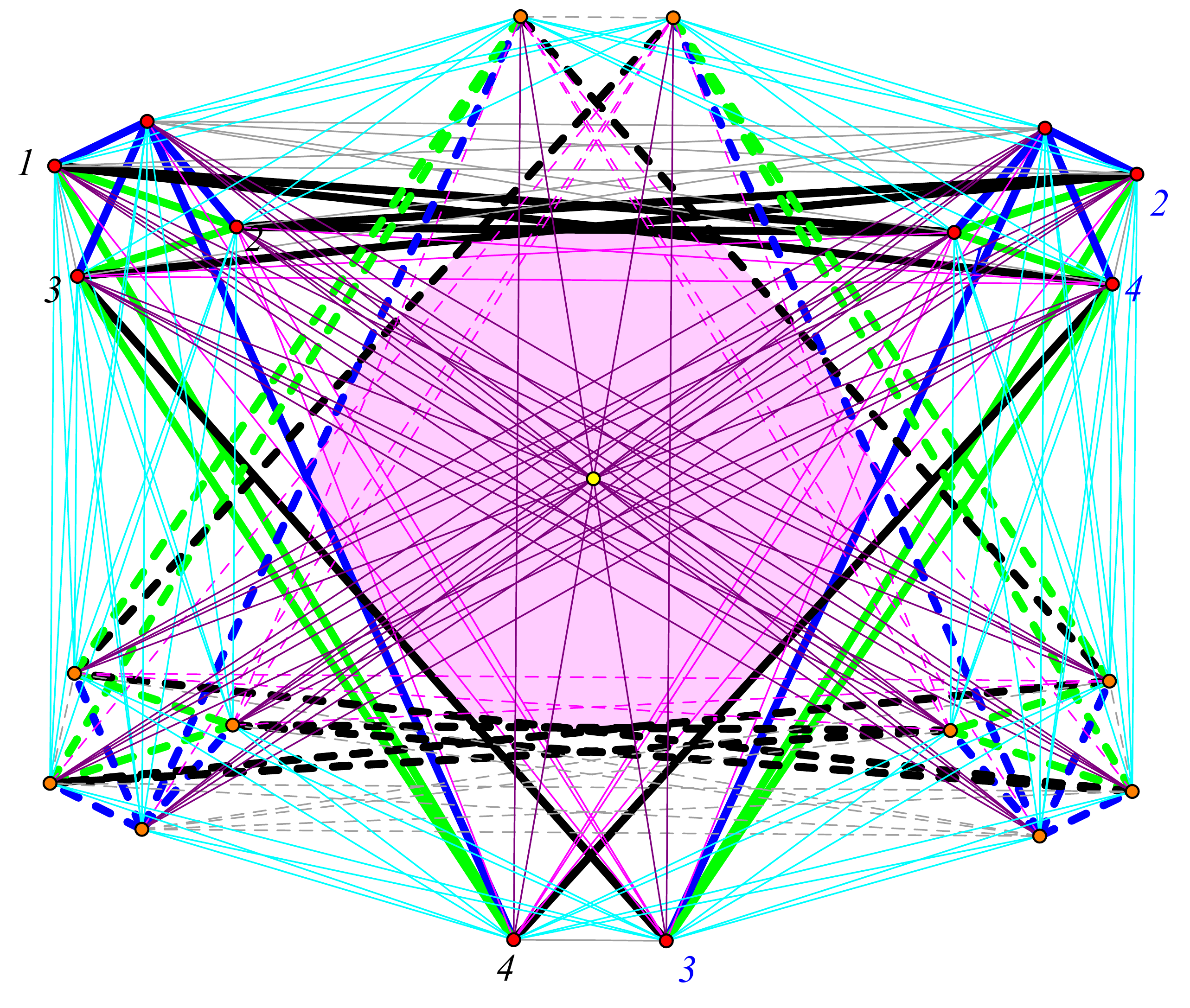}
\caption{A two-obstacle embedding of $X_{4}\cupdotop X_{4}$.
One copy of $X_{4}$ is shown with thick solid lines and red vertices;
the other copy is shown with thick dashed lines and orange vertices.
The interior obstacle is highlighted in pink.
Non-edges that are blocked by the interior obstacle that are
non-edges from the individual copies of $X_{4}$ are shown with magenta lines (dashed as appropriate),
while non-edges between the two copies that are blocked by the interior obstacle are drawn in purple.
Non-edges of the individual copies that are blocked by the outside obstacle
are drawn in gray (dashed and solid),
and non-edges between the two copies that are blocked by the outside obstacle are drawn in cyan.}
\label{F:twoObsDisjoint}
\end{center}
\end{figure}

\begin{corollary} \label{C:obs23}
All of the following hold.
\be
\item $\obs(X_{4}\cupdotop X_{4}) = 2$.
\item $\obs(I\cupdotop I) = 2$.
\item $ \obs(X_{6}\cupdotop X_{6}) = 2$.
\ee
\end{corollary}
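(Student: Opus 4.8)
The plan is to establish the lower and upper bounds separately; the lower bound is already in hand, so the real content is the matching upper bound showing two obstacles suffice for each disjoint union.

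For the lower bound, I would simply invoke Lemma~\ref{L:disjun}: since $\obsout(X_4) = \obsout(I) = \obsout(X_6) = 2 > 1$ by Proposition~\ref{P:gesdp-obsout}, the lemma immediately yields $\obs(X_4\cupdotop X_4) > 1$, and likewise for the other two unions. Hence each of these obstacle numbers is at least $2$.

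For the upper bound, the natural first attempt is the cheap construction already mentioned in the text: take the two-obstacle outside-obstacle representation of $X_j$ guaranteed by Proposition~\ref{P:gesdp-obsout}, place two far-apart copies, and use a separate interior obstacle for each copy together with a single shared outside obstacle --- but that uses three obstacles, not two. So the key idea (as the figure caption for Figure~\ref{F:twoObsDisjoint} signals) is to be cleverer: draw the two copies of $X_j$ so that a \emph{single} interior obstacle blocks all interior-blocked non-edges of \emph{both} copies simultaneously (plus possibly some cross non-edges between the copies), and a single outside obstacle handles everything else. Concretely, I would start from the known two-obstacle drawing of $X_j$, which has a mirror of symmetry (noted after Proposition~\ref{P:gesdp-obsout}), and place the second copy as a suitable translate/reflection of the first so that the two interior obstacles can be merged --- e.g., by nesting the copies or abutting them so that their "interior pockets" coincide or can be joined into one connected region without that region meeting any edge of either copy. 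One then verifies, non-edge by non-edge, that: (a) every non-edge internal to copy~1 that was blocked by its interior obstacle is still blocked by the merged interior obstacle; (b) the same for copy~2; (c) every non-edge internal to copy~$i$ formerly blocked by the outside obstacle is still blocked by the (new, larger) outside obstacle; and (d) every cross non-edge $pq$ with $p$ in copy~1 and $q$ in copy~2 meets one of the two obstacles. For $X_4\cupdotop X_4$ this verification is exactly what Figure~\ref{F:twoObsDisjoint} records, with the color coding (magenta/purple for interior-obstacle-blocked, gray/cyan for outside-obstacle-blocked) serving as the checklist; for $I\cupdotop I$ and $X_6\cupdotop X_6$ I would produce the analogous drawings --- using the icosahedron embedding of Figure~\ref{Fp:IcosGyro} and the $X_6$ embedding of Figure~\ref{Fp:gyroHex} as the building blocks --- and assert that the same grouping-of-related-vertices trick goes through, since the general picture in Figure~\ref{F:gyroGeneral} shows the two-obstacle structure is uniform in $n$.

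The main obstacle is step (d), the cross non-edges between the two copies: there are $100$ vertex pairs spanning the two copies of $X_4$ (and $120$, $144$ for $I$, $X_6$), all of which are non-edges, and one must arrange the relative placement of the copies so that the line segment joining any vertex of one copy to any vertex of the other is intercepted by one of the two obstacles. Placing the copies "far apart" makes the cross segments nearly parallel, so the strategy is to position them so that all cross segments pass through a common corridor that the outside obstacle (suitably routed around both drawings) reaches into, while the remaining cross segments that would escape near the shared boundary get caught by the merged interior obstacle --- this is precisely the delicate part that the figure is designed to make visually convincing, and in the write-up it is discharged by exhibiting the explicit drawing rather than by further combinatorial argument. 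Combining the lower bound from Lemma~\ref{L:disjun} with these explicit two-obstacle drawings gives $\obs(X_4\cupdotop X_4) = \obs(I\cupdotop I) = \obs(X_6\cupdotop X_6) = 2$, as claimed.
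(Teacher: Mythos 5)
Your proposal matches the paper's argument: the lower bound is obtained from Lemma~\ref{L:disjun} together with Proposition~\ref{P:gesdp-obsout}, and the upper bound is discharged by the explicit two-obstacle drawing of Figure~\ref{F:twoObsDisjoint} for $X_4\cupdotop X_4$, with the other two unions drawn analogously --- exactly as the paper does. (A minor aside: the cross-pair counts for $I\cupdotop I$ and $X_6\cupdotop X_6$ should be $144$ and $196$ rather than $120$ and $144$, but this does not affect the argument.)
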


We note that each of $X_{4}$, $I$, and $X_{6}$ is
the skeleton of a polyhedron, and thus is planar.
Further,
the disjoint union of planar graphs is planar;
each of the graphs in Corollary~\ref{C:obs23}
is a planar graph.
Thus
we have answered (negatively!) a question of
Alpert, Koch, \& Laison~\cite[p.~231]{AlpKocLai10},
asking whether every planar graph has obstacle number at most $1$. Note that $n(X_{4}\cupdotop X_{4}) = 20$, $n(I \cupdotop I) = 24$, and $n(X_{6}\cupdotop X_{6}) = 28$.

\begin{corollary}
There exists a planar graph with obstacle number greater than $1$.
\end{corollary}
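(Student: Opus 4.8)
The plan is to simply package the work already done into the desired existence statement. The key input is Corollary~\ref{C:obs23}, which asserts that $\obs(X_{4}\cupdotop X_{4}) = 2$ (and likewise for $I\cupdotop I$ and $X_{6}\cupdotop X_{6}$). Since $2 > 1$, it remains only to check planarity of one such graph.

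First I would recall that $X_{4}$ is by definition the $1$-skeleton of the gyroelongated square bipyramid (the Johnson solid $J_{17}$), hence the edge graph of a convex polyhedron, and is therefore planar by Steinitz's theorem (or just by exhibiting a planar drawing directly from the polyhedron). Next I would note that a disjoint union of two planar graphs is planar: place one planar embedding inside a bounded face of nothing and the other far away in the plane, so that their drawings do not interact. Consequently $X_{4}\cupdotop X_{4}$ is planar.

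Combining these two observations with Corollary~\ref{C:obs23}, the graph $X_{4}\cupdotop X_{4}$ is a planar graph with $\obs(X_{4}\cupdotop X_{4}) = 2 > 1$, which proves the corollary. (The same argument works verbatim for $I\cupdotop I$ or $X_{6}\cupdotop X_{6}$, and one could also appeal to Proposition~\ref{P:gesdp-obs}, which gives a planar example of order only $10$.)

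There is no genuine obstacle at this stage: all the difficulty was front-loaded into Lemma~\ref{L:disjun} (which converts outside-obstacle lower bounds into ordinary-obstacle lower bounds for disjoint unions) and into Proposition~\ref{P:gesdp-obsout}, whose lower bounds $\obsout(X_{4}) = \obsout(I) = \obsout(X_{6}) = 2$ rest on the SAT encodings built from Lemmas~\ref{L:4pt}, \ref{L:5pt}, and \ref{L:extobs}. Given those results, the present corollary is an immediate specialization, and the only thing to be careful about is the (routine) verification that the graphs in question really are planar.
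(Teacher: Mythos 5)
Your proof is correct and follows essentially the same route as the paper: the authors likewise combine Corollary~\ref{C:obs23} with the facts that each $X_j$ is the skeleton of a convex polyhedron (hence planar) and that disjoint unions of planar graphs are planar, concluding that $X_4\cupdotop X_4$ is a planar graph with obstacle number $2$. Your parenthetical remark that $X_4$ itself (Proposition~\ref{P:gesdp-obs}) gives a smaller, connected planar example is also consistent with what the paper observes later, though that result appears only in the following section.
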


\section{Obstacle Number, Part II} \label{S:obs2}

In this section we develop a variation on the SAT instance constructed
in Section~\ref{S:obsout}.
This new SAT instance will encode statements about arbitrary obstacles, instead of only outside obstacles. It will enable us to show directly that certain graphs have obstacle number greater than $1$.

In addition to
the $4$-Point Rule (Lemma~\ref{L:4pt})
and the $5$-Point Rule (Lemma~\ref{L:5pt}),
%which corresponds to the property of an oriented matroid being acyclic,
we use a new lemma, Lemma \ref{L:twoobs},
which plays role similar to that of Lemma~\ref{L:extobs}
in Section~\ref{S:obsout}.
%If $a$ and $b$ are points in the plane, then the line $\overleftrightarrow{ab}$ determines two closed halfplanes, $H^{+}_{ab}$ and $H^{-}_{ab}$, where $H^{+}_{ab}$ contains all points $c$ such that either $c$ is on line $\overleftrightarrow{ab}$ or $abc$ is oriented clockwise, and $H^{-}_{ab}$ contains points $c$ either on $ab$ or so that $abc$ is oriented counterclockwise. 
For clarity in the next lemma, given distinct points $a$ and $b$, we denote the two closed halfplanes determined by line $\overleftrightarrow{ab}$ as $H^{+}_{ab}$ and $H^{-}_{ab}$, where  $H^{+}_{ab}$ contains all points $y$ such that either $y$ is on line $\overleftrightarrow{ab}$ or $aby$ is oriented clockwise.
An \emph{$ab$ key path with respect to $cd$}, denoted $P_{ab}(cd)$,
is a path from $a$ to $b$ that does not cross the line 
$\overleftrightarrow{cd}$;
that is, a path in $G$ from $a$ to $b$ that is entirely contained
in one of the closed halfplanes $H^{+}_{cd}$ or $H^{-}_{cd}$; see Figure \ref{fig:abKeyPath}.

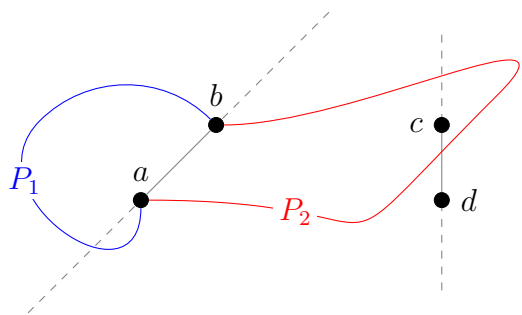
\begin{figure}[htbp]
\begin{center}

\begin{tikzpicture}
\node[draw, circle, fill=black, inner sep = 2pt, label=above:$a$] (A) at (0,0) {};
\node[draw, circle, fill=black,inner sep = 2pt, label=above:$b$] (B) at (1,1){};
\node[draw, circle,fill=black, inner sep = 2pt, label=left:$c$](C) at (4,1) {};
\node[draw, circle,fill=black, inner sep = 2pt, label=right:$d$](D) at (4, 0) {};

\draw[gray] (A) -- (B);
\draw[dashed, gray] ($(A)!-1.5!(B)$) -- (A);
\draw[dashed, gray] ($(B)!-1.5!(A)$) -- (B);

\draw[gray] (C) -- (D);
\draw[dashed, gray] ($(C)!-1.2!(D)$) -- (C);
\draw[dashed, gray] ($(D)!-1.2!(C)$) -- (D);

\node[] (p) at ($ (A)!1.2!90:(B)$) {} ;%($(A)!1.5!90:(B)$) {p};
\node[] (q) at ($ (D)!1.8!-30:(C)$) {};

\draw[blue] (A) to[ in=180+45, out=270,  looseness=2] node[near end, fill=white, inner sep = 2 pt] {$P_{1}$} (p) to[ in = 250, in = 90+45]  (B) ;

\draw[red, ] (A) to[ in=180+45, out=0,  looseness=2] node[fill=white, inner sep = 2 pt, near start] {$P_{2}$} (q) to[ in = 180+10, in = -0, looseness = 1.2]  (B) ;

\end{tikzpicture}

\caption{An $ab$ key path with respect to $cd$, denoted $P_{ab}(cd)$,
is a path from $a$ to $b$ that does not cross the line $\overset\leftrightarrow{cd}$. The path $P_{1}$ (blue) is an $ab$ key path with respect to $cd$, but the path $P_{2}$ (red) is not. 
}
\label{fig:abKeyPath}
\end{center}
\end{figure}

\begin{lemma} \label{L:twoobs}

Suppose we are given an obstacle representation of a graph $G$
that uses at most one obstacle.
Then, for each non-edge $ab$,
and for each non-edge $cd \neq ab$
($ab$ and $cd$ may share one vertex),
there exists a halfplane $H_{ab}(cd) \in \{H^{+}_{ab}, H^{-}_{ab}\}$
such that, for every $ab$ key path with respect to $cd$, $P_{ab}(cd)$,
some internal vertex of $P_{ab}(cd)$ lies in the interior of $H_{ab}(cd)$).

%\sout{Suppose we are given an obstacle representation of a graph $G$
%using at most one obstacle.
%Let $ab$, $cd$ be distinct non-edges of $G$.
%Then there exists a half-plane $H$ determined by
%the line $\overleftrightarrow{ab}$
%such that, if $P$ is an $a,b$-path in $G$
%that does not intersect the line $\overleftrightarrow{cd}$,
%then some internal vertex of $P$ lies in $H$.}
\end{lemma}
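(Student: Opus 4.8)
The plan is to mimic the proof of Lemma~\ref{L:extobs}, but now accounting for the possibility that the single obstacle $O$ is an interior obstacle rather than an outside one. Fix a non-edge $ab$ and a non-edge $cd \neq ab$. First I would observe that the segment $ab$ must be blocked by $O$, so $O$ meets the open segment $ab$; in particular $O$ has points on the line $\overleftrightarrow{ab}$, or at least arbitrarily close to it on both sides after the usual perturbation (Observation~\ref{O:perturb}). The real content is to identify which side of $\overleftrightarrow{ab}$ the ``blocking part'' of $O$ sits relative to the paths avoiding $\overleftrightarrow{cd}$, so I would instead take as my candidate special halfplane $H_{ab}(cd)$ the one of $H^{+}_{ab}, H^{-}_{ab}$ that does \emph{not} contain (the relevant part of) $O$ near segment $ab$.

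Next, suppose toward a contradiction that there exist two $ab$ key paths $P_1, P_2$ with respect to $cd$, none of whose internal vertices lies in the interior of our chosen halfplane; so all internal vertices of $P_1$ and $P_2$ lie in the closure of the \emph{opposite} halfplane $H'$. After perturbing so that no internal vertex lies on $\overleftrightarrow{ab}$ (and none lies on $\overleftrightarrow{cd}$, which is consistent with being a key path since key paths avoid $\overleftrightarrow{cd}$), both $P_1$ and $P_2$ lie in $H'$ together with their endpoints $a,b$. Then $P_1 \cup P_2$ is a closed curve lying in $H'$, and the open segment $ab$ lies on the boundary line $\overleftrightarrow{ab}$ of $H'$; I would argue that a sub-segment of $ab$ is enclosed by the closed curve $P_1 \cup P_2$ (this is where I use that $a,b$ are the common endpoints and the curve lies strictly to one side of the line through them except at $a,b$). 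Hence part of segment $ab$ lies in a bounded component of the complement of $P_1 \cup P_2$, which is contained in $H'$. The obstacle $O$, however, does not enter the interior of $H'$ near $ab$ by our choice of $H_{ab}(cd)$ — more precisely, the portion of $O$ that blocks $ab$ lies on the $H_{ab}(cd)$ side — so $O$ cannot reach this enclosed sub-segment of $ab$ without crossing $P_1\cup P_2$, i.e., without crossing an edge of $G$, which is forbidden. This contradicts that $ab$ is blocked.

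The main obstacle, and the place where the argument genuinely differs from Lemma~\ref{L:extobs}, is making the choice of $H_{ab}(cd)$ precise and checking that it is well defined: I need to know that all of the obstacle's ``access'' to the segment $ab$ comes from a single side of $\overleftrightarrow{ab}$. This is where the hypothesis $cd \neq ab$ and the role of $\overleftrightarrow{cd}$ enter — the line $\overleftrightarrow{cd}$ cuts the plane into two halfplanes, the obstacle $O$ (being connected and disjoint from the drawing) lies within the closure of one of them, say $H^{\varepsilon}_{cd}$, and then the relevant side of $\overleftrightarrow{ab}$ is forced by which side $O$'s intersection with $H^{\varepsilon}_{cd}$ approaches segment $ab$ from. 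I expect the careful bookkeeping here — handling the degenerate case where $ab$ and $cd$ share a vertex, and ensuring the chosen halfplane does not depend on the particular key path — to be the fiddly part, but topologically it is the same Jordan-curve separation argument as before. Everything else (perturbation, the induced-subgraph reductions, the closed-curve argument) is routine and parallels Lemma~\ref{L:extobs}.
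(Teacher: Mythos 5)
There is a genuine gap, and it stems from mishandling the negation of the conclusion. The lemma can fail only if, \emph{for each} of the two halfplanes, some key path has no internal vertex in its interior; unwinding this yields a key path $P_1$ contained in the closed halfplane $H^{-}_{ab}$ and a key path $P_2$ contained in the closed halfplane $H^{+}_{ab}$ --- two key paths on \emph{opposite} sides of $\overleftrightarrow{ab}$. You instead fix one candidate halfplane and place both $P_1$ and $P_2$ in the closure of the single opposite halfplane $H'$. But then the closed curve $P_1\cup P_2$ lies entirely in $\overline{H'}$ and meets the line $\overleftrightarrow{ab}$ only at $a$ and $b$ (by the non-collinearity assumption), so every point of the open segment $ab$ can be joined to infinity through the opposite open halfplane without meeting the curve: no sub-segment of $ab$ is enclosed. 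The enclosure claim on which your contradiction rests is therefore false, and indeed an obstacle approaching $ab$ from that opposite side blocks $ab$ with nothing in its way.

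The auxiliary choices are also problematic and, as it turns out, unnecessary. The assertion that the connected obstacle must lie in the closure of one halfplane of $\overleftrightarrow{cd}$ is false: $cd$ is a non-edge, so neither the segment $cd$ nor the line through it belongs to the drawing, and a connected obstacle may cross that line freely; consequently your definition of the ``special'' halfplane via the obstacle's position does not get off the ground. More importantly, your argument never derives a contradiction involving the non-edge $cd$, which is precisely where the hypothesis of at most one obstacle must be used. The paper's proof takes the two key paths on opposite closed halfplanes of $\overleftrightarrow{ab}$, observes that, being key paths with the common endpoints $a,b$ (which cannot both lie on $\overleftrightarrow{cd}$), they lie in the \emph{same} closed halfplane of $\overleftrightarrow{cd}$, and concludes that the closed curve $P_1\cup P_2$ separates the open segment $ab$ (which it encloses) from the open segment $cd$ (which lies in the component meeting the other open halfplane of $\overleftrightarrow{cd}$). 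A single connected obstacle, being disjoint from the drawing, lies in one component of the complement of $P_1\cup P_2$ and hence cannot block both non-edges. That separation of $ab$ from $cd$ is the step your proposal is missing.
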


\begin{proof}

Choose any non-edge $ab$,
and then choose a second non-edge $cd$ distinct from $ab$.
Perturbing slightly if necessary,
we assume that $\overleftrightarrow{ab} \neq \overleftrightarrow{cd}$ as well
(see Observation~\ref{O:perturb}).
Suppose for a contradiction that there exist two distinct $ab$ paths
$P_{1}$ and $P_{2}$ in $G$,
both $ab$ key paths with respect to $cd$,
that lie in different closed halfplanes
determined by $\overleftrightarrow{ab}$.
Without loss of generality, we may assume that
$P_{1} \subseteq H^{+}_{ab}$ and $P_{2} \subseteq H^{-}_{ab}$.

Now, each of $P_{1}$, $P_{2}$ is contained in one of the two halfplanes
$H^{+}_{cd}$, $H^{-}_{cd}$,
because they are key paths with respect to $cd$.
Since $P_{1}$ and $P_{2}$ have common endpoints, namely $a$ and $b$,
we see that $P_{1}$ and $P_{2}$,
must be contained in the \emph{same} halfplane determined by
$\overleftrightarrow{cd}$. 

Therefore $P_{1}\cup P_{2}$ forms a closed path in the plane;
the open line segment $\overline{ab}$ lies in one component
of the complement of this closed path;
while the open line segment $\overline{cd}$ lies in a different component.
Since $G$ has only one obstacle, it is impossible for both
segments to be blocked,
a contradiction.

%Denote by $H^{+}$ and $H^{-}$ the two closed half-planes determined by the line $\overleftrightarrow{ab}$.
%
%Suppose that there exist two distinct $a,b$-paths $P_{1}$ and $P_{2}$
%in $G$
%such that (without loss of generality)
%$P_{1}\subseteq H^{+}$ and $P_{2}\subseteq H^{-}$,
%and both $P_{1}$, $P_{2}$ lie entirely on one side of
%the line $\overleftrightarrow{cd}$.
%Since the two paths have common endpoints, these endpoints must lie
%on one side of $\overleftrightarrow{cd}$,
%and so the two paths lie on the \emph{same} side of $\overleftrightarrow{cd}$.
%We observe then that $P_{1}\cup P_{2}$ forms a closed path in the plane;
%the open line segment $\overline{ab}$ lies in one component
%of the complement of this closed path,
%while the open line segment $\overline{cd}$ lies in a different component.
%Since $G$ has only one obstacle, it is impossible for both
%segments to be blocked,
%a contradiction.
\end{proof}

Given a graph $G$,
we can use Lemmas~\ref{L:twoobs}, \ref{L:4pt} (the $4$-Point rule),
and \ref{L:5pt} (the $5$-Point Rule)
to create a SAT instance
encoding necessary conditions for the existence
of an obstacle representation of $G$
using at most $1$ obstacle.
Note that we are saying nothing here about outside obstacles.
Thus, if this SAT instance is not satisfiable,
then we may conclude that graph $G$ requires at least two obstacles,
that is, that $\obs(G) > 1$.

We represent triples of vertices
as we did before, in Section~\ref{S:obsout},
and
we encode the $4$-Point Rule and the $5$-Point Rule using the same clauses
as in our previous SAT instance.
We construct new SAT clauses based on Lemma~\ref{L:twoobs},
much as, in Section~\ref{S:obsout}, we constructed clauses based on
Lemma~\ref{L:extobs}.
We now look at how this is done.

%The concluding statement of Lemma~\ref{L:twoobs} is the same as that of Lemma~\ref{L:extobs}: that an $a,b$-path $P$ has an internal vertex lying in a special half-plane determined by the line $\overleftrightarrow{ab}$. However, while Lemma~\ref{L:extobs} makes this statement for every path $P$ with nonadjacent endpoints $a$ and $b$, Lemma~\ref{L:twoobs} only makes the statement if, in addition, there is some other non-edge $cd$ such that $P$ lies entirely on one side of $\overleftrightarrow{cd}$.

%Let us say that, for nonadjacent $a$, $b$, an $a,b$-path $P$ is a \emph{key path} if it meets the above condition: that for some non-edge $cd\ne ab$, $P$ does not intersect the line $\overleftrightarrow{cd}$. Lemma~\ref{L:twoobs} thus states that, given a one-obstacle representation of $G$, one of the two half-planes determined by each non-edge $ab$ of $G$ is special, in that each key $a,b$-path contains a vertex in that half-plane.

Choose any non-edge $ab$ and choose any non-edge $cd$ distinct from $ab$. We illustrate the encoding of Lemma~\ref{L:twoobs} in terms of SAT clauses by showing how to encode statements about a particular path. Let $a,s,t,\dots,u,b$ be the sequence of vertices in some $a,b$-path $P$ (not necessarily a key path), where vertices $a$, $b$ are not adjacent (so that $ab$ is a non-edge). Let $c$, $d$ be nonadjacent vertices not lying on $P$. 

We introduce a new variable $k_{P(cd)}$ to represent the statement that
$P$ is an $ab$ key path with respect to $cd$.
If all vertices $v$ of $P$ produce triangles $cdv$
having the same orientation, then $P$ is a key path.
This is encoded by the following two clauses: 
\begin{gather}
\label{Cl:twoobs3}
\phantom{\neg} x_{cda} \vee \phantom{\neg} x_{cds} \vee
  \phantom{\neg} x_{cdt} \vee \dots \vee \phantom{\neg} x_{cdu}
  \vee \phantom{\neg} x_{cdb}
  \vee k_{P(cd)}\\
\label{Cl:twoobs4}
\neg x_{cda} \vee \neg x_{cds} \vee
  \neg x_{cdt} \vee \dots \vee \neg x_{cdu}
  \vee \neg x_{cdb}
  \vee k_{P(cd)}
\end{gather}

Next we encode
the statement that given this $ab$ and $cd$
we can find a special side of $ab$
so that every $ab$ key path with respect to $cd$
lies on the special side of $ab$.
The special side is encoded in another variable, $s_{ab, cd}$.
We canonically choose that $s_{ab, cd}$
represents the statement that the special halfplane is $H^{+}_{ab}$;
thus $\lnot s_{ab, cd}$
represents the statement that the special halfplane is $H^{-}_{ab}$.
Note that the special side of $ab$ depends on the choice of non-edge $cd$. 
The following clauses encode the desired statement:
\begin{gather}
\label{Cl:twoobs1}
%(k_{P(cd)} \land s_{H^{+}}) \to [(s \in H^{+}) \lor t \in H^{+} \lor \cdots u \in H^{+})
%NOT P or Q
%
\lnot k_{P(cd)} \lor  \lnot s_{ab, cd} \lor x_{abs} \lor x_{abt} \lor \cdots \lor x_{abu}\\ 
%
%\neg k_{P(cd)} \vee \neg s_{ab, cd} \vee \phantom{\neg} x_{abs} \vee \phantom{\neg} x_{abt} \vee \dots \vee \phantom{\neg} x_{abu}\\
\label{Cl:twoobs2}
%(k_{P(cd)} and s_{H^{-}}) \implies one of abx is counterclockwise
%
\lnot k_{P(cd)} \lor s_{ab,cd} \lor \lnot x_{abs} \lor \lnot x_{abt} \lor  \cdots \lor \lnot x_{abu} 
%\neg k_{P(cd)} \vee \phantom{\neg} s_{ab,cd} \vee \neg x_{abs} \vee \neg x_{abt} \vee \dots \vee \neg x_{abu}
\end{gather}

\begin{observation} \label{O:sat-obs}
Let $G$ be a graph.
If $\obs(G) \le 1$,
then the SAT instance consisting of
all clauses
of the forms
(\ref{Cl:4pt}),
(\ref{Cl:5pt1}), and (\ref{Cl:5pt2}) from Section~\ref{S:obsout},
and
(\ref{Cl:twoobs3})--(\ref{Cl:twoobs2})
shown above---using canonical variables---is
satisfiable.
\end{observation}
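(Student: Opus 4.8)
The plan is to verify Observation~\ref{O:sat-obs} by showing that any obstacle representation of $G$ with at most one obstacle gives rise to a truth assignment satisfying every clause of the listed forms. The overall structure mirrors the reasoning already established for Observation~\ref{O:sat-obsout}, so the bulk of the work is simply checking, clause family by clause family, that the natural geometric assignment works.

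First I would fix an obstacle representation of $G$ using at most one obstacle; by Observation~\ref{O:perturb} we may assume no three vertices are collinear, so each triple of vertices $abc$ is either clockwise or counterclockwise, and we set the variable $x_{abc}$ to \emph{true} exactly when $abc$ is a clockwise triple (consistently extended to all permutations and to the chosen canonical representatives). With this assignment, the clauses~(\ref{Cl:4pt}), (\ref{Cl:5pt1}), and~(\ref{Cl:5pt2}) are satisfied because they are literal transcriptions of the $4$-Point Rule (Lemma~\ref{L:4pt}) and the $5$-Point Rule (Lemma~\ref{L:5pt}), which hold for \emph{every} planar point configuration. Next, for each non-edge $ab$, each non-edge $cd\neq ab$, and each $a,b$-path $P$ avoiding $c,d$, I set $k_{P(cd)}$ to \emph{true} iff $P$ is in fact an $ab$ key path with respect to $cd$, i.e.\ iff all vertices of $P$ lie in a single closed halfplane $H^{+}_{cd}$ or $H^{-}_{cd}$. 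Clauses~(\ref{Cl:twoobs3}) and~(\ref{Cl:twoobs4}) are then satisfied: if $k_{P(cd)}$ is \emph{true} the last literal handles it, while if $k_{P(cd)}$ is \emph{false} then the vertices of $P$ do not all have the same orientation relative to $\overleftrightarrow{cd}$, so not all $x_{cdv}$ agree, making one of the remaining literals in each clause true.

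Finally, for the crucial clauses~(\ref{Cl:twoobs1}) and~(\ref{Cl:twoobs2}), I invoke Lemma~\ref{L:twoobs}: for each non-edge $ab$ and non-edge $cd\neq ab$, the lemma provides a ``special'' halfplane $H_{ab}(cd)$ meeting every $ab$ key path (with respect to $cd$) in an internal vertex; I set $s_{ab,cd}$ to \emph{true} iff this special halfplane is $H^{+}_{ab}$. Now consider a path $P$ with internal vertices $s,t,\dots,u$. If $k_{P(cd)}$ is \emph{false} both clauses are satisfied by the first literal. If $k_{P(cd)}$ is \emph{true} then $P$ is a key path, so some internal vertex lies in the interior of the special halfplane; if $s_{ab,cd}$ is \emph{true} (special side $H^{+}_{ab}$) that vertex $v$ has $abv$ clockwise, so $x_{abv}$ is true and~(\ref{Cl:twoobs1}) holds, while~(\ref{Cl:twoobs2}) holds via its second literal; the case $s_{ab,cd}$ \emph{false} is symmetric. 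Since all clause families are satisfied, the SAT instance is satisfiable, proving the observation.

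The step I expect to be the main (minor) obstacle is the bookkeeping around canonical variables and the "does not lie on $P$" hypothesis on $c,d$: one must make sure the assignment of $x_{abc}$ to permuted and canonical representatives is consistent, and that the definition of $k_{P(cd)}$ and the application of Lemma~\ref{L:twoobs} only ever reference configurations the lemma actually covers (in particular that $\overleftrightarrow{ab}\neq\overleftrightarrow{cd}$, which the perturbation assumption secures). None of this is deep — it is the same pattern as Observation~\ref{O:sat-obsout} — but it is where an error could creep in, so I would state the assignment carefully before checking the clauses.
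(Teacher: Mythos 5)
Your proposal is correct and matches the paper's (implicit) justification: the paper presents this as an Observation whose proof is exactly the clause-by-clause check you describe, assigning $x_{abc}$, $k_{P(cd)}$, and $s_{ab,cd}$ their geometric truth values from a one-obstacle representation and invoking Lemmas~\ref{L:4pt}, \ref{L:5pt}, and \ref{L:twoobs}. No gaps; the bookkeeping points you flag (canonical variables, perturbation to avoid collinearity and $\overleftrightarrow{ab}=\overleftrightarrow{cd}$) are handled the same way in the paper via Observation~\ref{O:perturb}.
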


Using these ideas,
we can determine the exact value of the obstacle number for
the icosahedron and some similar graphs.

\begin{proposition} \label{P:gesdp-obs}
All of the following hold.
\be
\item $\obs(X_{4}) = 2$.
\item $\obs(I)= \obs(X_{5}) = 2$.
\item $\obs(X_{6}) = 2$.
\ee
\end{proposition}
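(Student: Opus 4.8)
The plan is to prove the upper and lower bounds separately, exactly mirroring the structure already used for $\obsout$ in Proposition~\ref{P:gesdp-obsout}. For the upper bound, $\obs(X_n) \le 2$ for $n = 4,5,6$ is immediate: the two-obstacle embeddings exhibited in Figures~\ref{F:IcosPix} and \ref{F:gyroDipyramids} already use two obstacles (one outside, one interior), and an obstacle representation using an outside obstacle is in particular an obstacle representation, so $\obs(X_n) \le \obsout(X_n) \le 2$. (Alternatively one simply invokes the inequality $\obs(G) \le \obsout(G)$ together with Proposition~\ref{P:gesdp-obsout}.) So the real content is the lower bound $\obs(X_n) \ge 2$ for each of the three graphs.

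For the lower bound I would argue by contradiction: suppose one of these graphs $G \in \{X_4, I, X_6\}$ has $\obs(G) \le 1$. Then by Observation~\ref{O:sat-obs} the SAT instance built from the $4$-Point Rule clauses \eqref{Cl:4pt}, the $5$-Point Rule clauses \eqref{Cl:5pt1} and \eqref{Cl:5pt2}, and the key-path clauses \eqref{Cl:twoobs3}--\eqref{Cl:twoobs2}, using canonical variables for the orientation variables $x_{abc}$ and the side variables $s_{ab,cd}$, must be satisfiable. One then generates this instance for each of the three graphs and feeds it to standard SAT solvers. As in the proof of Proposition~\ref{P:gesdp-obsout}, I would run several independent solvers (e.g.\ MiniSat, PicoSAT, zChaff) and, ideally, have two authors generate the instances independently from the graph data, to guard against bugs in the encoding; the software for generating the instances is available at~\cite{Cha17}. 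Each solver reports UNSAT, contradicting Observation~\ref{O:sat-obs}, so $\obs(G) \ge 2$ for each $G$, and combined with the upper bound we get equality.

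The main obstacle is not the logic — that is a direct translation of the $\obsout$ argument — but the computational feasibility and the correctness of the encoding. The instances here are strictly larger than in the $\obsout$ case: the key-path clauses involve, for every ordered pair of disjoint non-edges $ab$, $cd$ and every $a,b$-path avoiding $c,d$, the auxiliary variable $k_{P(cd)}$ and the clauses \eqref{Cl:twoobs3}--\eqref{Cl:twoobs2}, so the number of clauses grows with the number of paths, not merely with $\binom n k$. For the icosahedron and for $X_4$, $X_6$ this is still within reach of modern solvers, but one must be careful that the path enumeration is complete (every $a,b$-path, across all disjoint non-edges $cd$) and that the canonicalization of the $x$ and $s$ variables is applied consistently; an incomplete set of path clauses would only weaken the instance and could turn a genuine UNSAT into a spurious SAT, while it cannot produce a false UNSAT. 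Since we obtain UNSAT, the conclusion is safe regardless, but reproducibility is the delicate point, which is why independent re-implementation and multiple solvers are the right safeguard.
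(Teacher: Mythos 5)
Your proposal matches the paper's proof essentially exactly: the upper bounds follow from $\obs \le \obsout$ together with Proposition~\ref{P:gesdp-obsout}, and the lower bounds come from the unsatisfiability of the SAT instance of Observation~\ref{O:sat-obs}, verified with multiple independent solvers and independently generated encodings. (One small point: Lemma~\ref{L:twoobs} permits the non-edges $ab$ and $cd$ to share a vertex, so the key-path clauses should not be restricted to disjoint non-edges as your aside suggests, though, as you correctly observe, omitting clauses could only turn a true UNSAT into a spurious SAT, never the reverse.)
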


\begin{proof}
The lower bounds were found using a computer.
We create a SAT instance as described above,
using clauses representing the $4$-Point Rule,
the $5$-Point Rule,
and the statement of Lemma~\ref{L:twoobs}.
See~\cite{Cha17} for software to generate the SAT instances.
For each graph,
a standard SAT solver
(as before,
we used MiniSat \cite{MiniSat}, PicoSAT \cite{PicoSAT},
and zChaff \cite{zChaff})
indicates that
the SAT instance is not satisfiable.
Thus, by Observation~\ref{O:sat-obs},
we have $\obsout \ge 2$ for each graph.

For the upper bound, we apply Proposition~\ref{P:gesdp-obsout},
which states that each graph considered here has $\obsout = 2$,
and note that $\obs \le \obsout$ for every graph.
\end{proof}

Note that
Part (2) of Proposition~\ref{P:gesdp-obs}
answers a question of
Alpert, Koch, \& Laison~\cite[p.~231]{AlpKocLai10}
asking for the obstacle number of the icosahedron.

Note also that the graph $X_{4}$,
mentioned in part (1) of Proposition~\ref{P:gesdp-obs},
has order $10$.
This is thus the second known example of a graph of order $10$
with obstacle number $2$
(the first being $K^*_{5,5}$,
shown to have obstacle number $2$ by
Pach \& Sari\"{o}z~\cite[Thm.~2.1]{PacSar11}).
But unlike the Pach-Sari\"{o}z example, graph $X_4$ is planar.
We do not know whether there is any planar graph---or,
indeed, any graph at all---of
smaller order
that has obstacle number $2$.

Alpert, Koch, \& Laison~\cite[p.~231]{AlpKocLai10}
asked for the obstacle number of the dodecahedron.
We answer this question as follows.
\begin{proposition} \label{dodec-obs}
Let $D$ be the dodecahedron.
Then $\obs(D) = \obsout(D) = 1$.
\end{proposition}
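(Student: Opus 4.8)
The plan is to establish the lower bound $\obs(D)\ge 1$ and $\obsout(D)\ge 1$ trivially, and to concentrate all the work on the upper bound $\obsout(D)\le 1$, from which $\obs(D)\le\obsout(D)\le 1$ follows. First I would observe that the dodecahedron is not a complete graph, so $\obs(D)\ge 1$ (as noted in the proof of Proposition~\ref{P:pendant}, $\obs(G)=0$ iff $G$ is complete), and hence also $\obsout(D)\ge 1$; the content of the proposition is therefore entirely the construction of an obstacle representation of $D$ with a single outside obstacle.

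The heart of the argument is to exhibit an explicit straight-line drawing of the dodecahedron in the plane in which every non-edge is visible from the unbounded face---equivalently, a drawing in which some connected region of the complement lying in the outer face meets every non-edge segment while meeting no edge. The natural approach is to reuse the computer-assisted machinery of Section~\ref{S:obsout}: generate the SAT instance encoding the necessary conditions (clauses \eqref{Cl:4pt}, \eqref{Cl:5pt1}, \eqref{Cl:5pt2} for the $4$- and $5$-Point Rules, together with clauses \eqref{Cl:extobs1}, \eqref{Cl:extobs2} from Lemma~\ref{L:extobs}) for $D$, run a SAT solver such as MiniSat, PicoSAT, or zChaff, and---since the instance will turn out to be satisfiable---use the returned truth assignment as a guide to the orientation data $[xyz]$ of a candidate point configuration. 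From this combinatorial data one reads off a concrete placement of the $20$ vertices (rational coordinates suffice, and can be verified by hand via the determinant signs $[abc]$), draws the $30$ edges as segments, and checks that a single outside obstacle---the relevant connected component of the complement lying in the unbounded region---blocks all $\binom{20}{2}-30=160$ non-edges. I would present this as a figure exhibiting the embedding, with edges drawn thick, non-edges thin, and the outside obstacle shaded, exactly in the style of Figures~\ref{F:IcosPix} and \ref{F:gyroDipyramids}.

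I expect the main obstacle to be producing a clean enough drawing that the single-outside-obstacle claim can be checked convincingly: satisfiability of the SAT instance is only a necessary condition (as the paper stresses right after Observation~\ref{O:sat-obsout}), so the solver's assignment may not correspond to any realizable point set, or may correspond to one whose outside region is disconnected in a way that forces a second obstacle. The likely resolution is an iterative hand-tuning pass---perturb vertices (Observation~\ref{O:perturb}), possibly add extra clauses forcing additional structure, and re-solve---until a realizable configuration with a connected "good" outer region emerges; since the dodecahedron has a rich symmetry group, I would also try symmetric layouts (e.g. a drawing respecting a $5$-fold or dihedral symmetry) directly by hand, which may short-circuit the SAT step entirely. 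Once a valid drawing is in hand, the verification that it uses only one outside obstacle is routine but should be spelled out by indicating which portion of the outer face each non-edge passes through.
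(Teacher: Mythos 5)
Your approach matches the paper's exactly: the lower bound is trivial (the dodecahedron is not complete), and the entire content of the proposition is an explicit straight-line drawing of $D$ in which a single outside obstacle blocks every non-edge, which the paper supplies as Figure~\ref{F:dodec} with no further argument. The only thing your proposal lacks is the drawing itself --- which is the whole proof --- but your plan for producing it (a symmetric hand-tuned layout, possibly guided by the satisfiable SAT instance of Section~\ref{S:obsout}) is precisely how such a figure would be obtained, so this is the same route rather than a different one.
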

\begin{proof}
Figure~\ref{F:dodec}
shows an obstacle representation of the dodecahedron,
using a single outside obstacle.
\end{proof}

\begin{figure}[htbp]
\begin{center}
\ffigbox[]{
\begin{subfloatrow}[2]
\ffigbox{\caption{A drawing of the dodecahedron}}{\includegraphics[width = \linewidth]{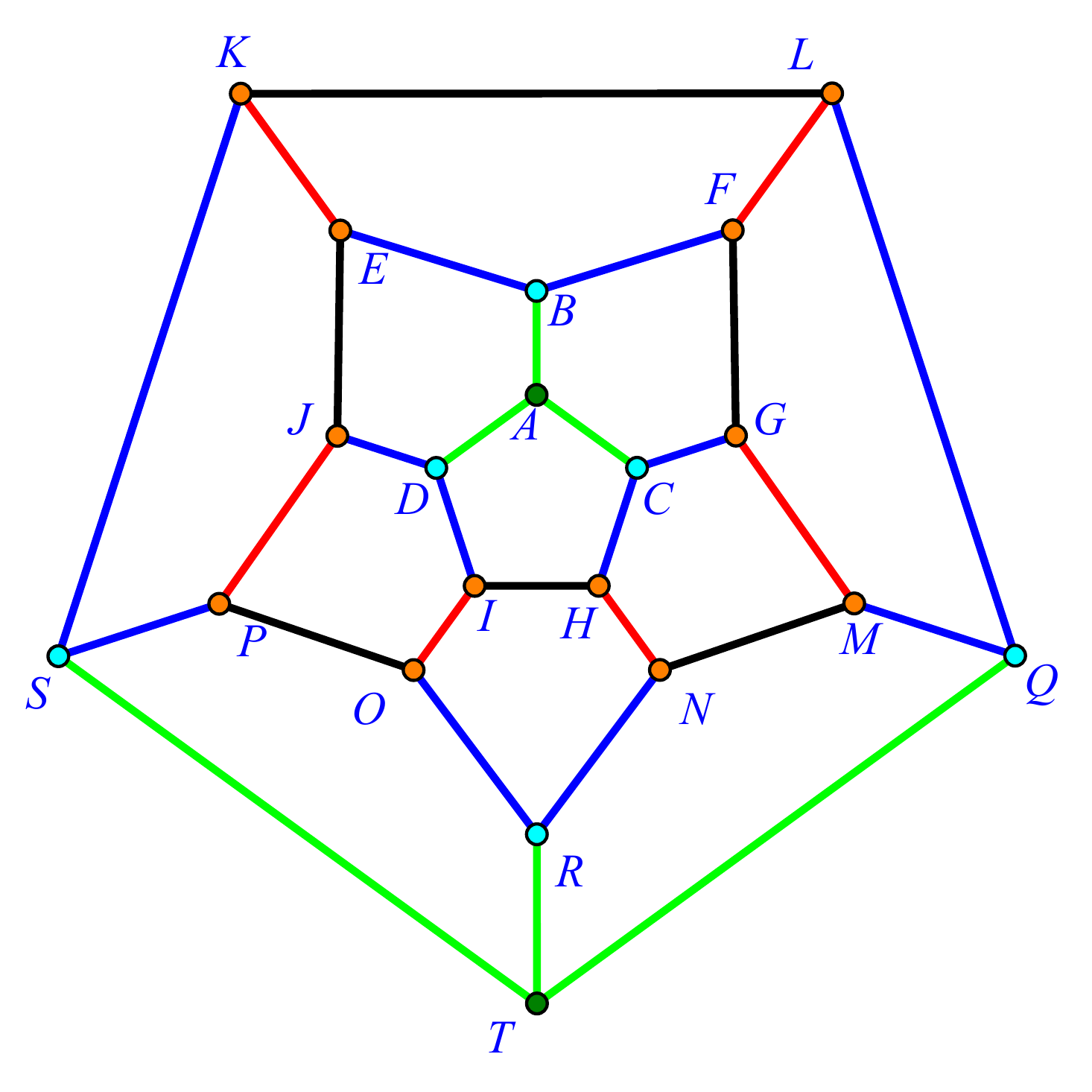}
}
\ffigbox{\caption{An obstacle representation using a single outside obstacle.}}{\includegraphics[width = \linewidth]{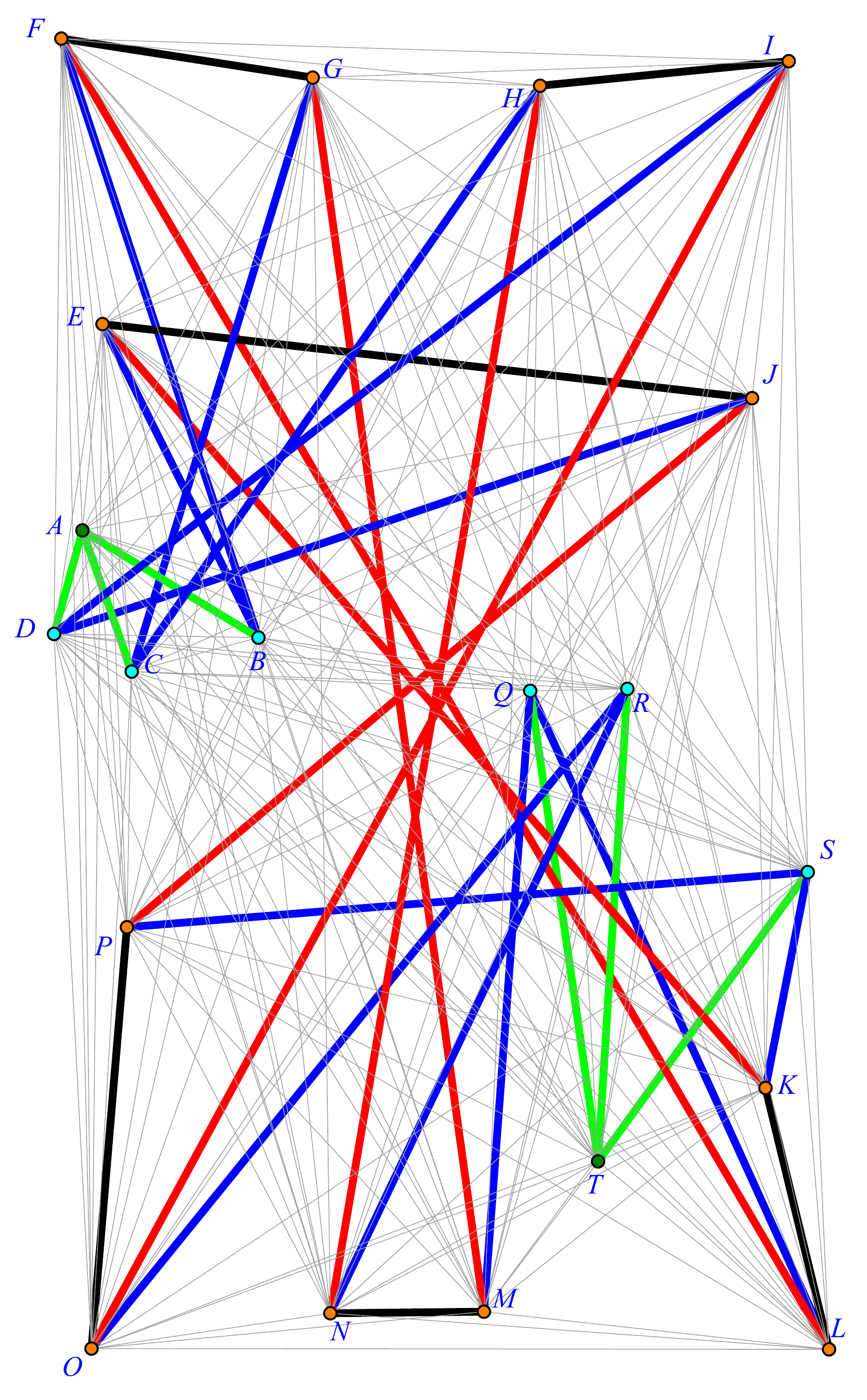}
}
\end{subfloatrow}
}
{\caption{A drawing of the dodecahedron, with useful edge-colorings, and an obstacle representation of the dodecahedron using a single outside obstacle (with corresponding edges and vertices).}
\label{F:dodec}
}
%\begin{subfigure}
%\includegraphics[width = .4\linewidth]{ColoredDodec.pdf}
%\caption{A drawing of the dodecahedron}
%\end{subfigure}
%\begin{subfigure}
%\includegraphics[width = .9\linewidth]{DodecOneObstacle-Good.pdf}
%\caption{An obstacle embedding using a single outside obstacle.}
%\end{subfigure}
%\caption{A drawing of the dodecahedron, with useful edge-colorings, and an obstacle representation of the dodecahedron using a single outside obstacle (with corresponding edges and vertices).}
%\label{F:dodec}
\end{center}
\end{figure}

%The following observation turns out to be a useful one.
%
%\begin{observation}\label{obs:obstaclePseudopods}
%Suppose a graph $G$ has an obstacle representation using $k$ obstacles. Then any subgraph of $G$ formed by deleting edges that form the boundary of one of the obstacles also has an obstacle representation using $k$ obstacles.
%\end{observation}
%
%The idea behind the observation is that we can extend an obstacle across its boundary (deleting the edges crossed by the obstacle to form the subgraph of $G$) to obtain an obstacle embedding (effectively, the same embedding) for the subgraph.
%
%\begin{corollary}
%Every subgraph of the dodecahedron can be embedded using a single outside obstacle.
%\end{corollary}
%
%\begin{proof}
%By inspection, the obstacle representation shown in Figure \ref{F:dodec} has the property that every edge of the dodecahedron forms part of the boundary of the outside obstacle.
%\end{proof}

\section{Open questions}

There are several interesting open questions related to obstacle numbers of graphs with small numbers of vertices.

\begin{question}
What is the minimum order of a planar graph
with obstacle number $2$?
\end{question}

By Proposition~\ref{P:ord5},
the above minimum must be at least $6$.

We have not been able to show that there exists
a planar graph with obstacle number greater than $2$.
It is natural to ask whether there exists an upper bound
on the obstacle numbers of planar graphs,
and, if so, what it is.
It seems likely that either there is no such upper bound,
or else the maximum obstacle number of a planar graph
is $2$.
We conjecture that the latter option holds.

\begin{conjecture} \label{J:obsplanar2}
If $G$ is a planar graph,
then $\obs(G)\le 2$.
\end{conjecture}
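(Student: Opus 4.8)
Since $\obs(G)\le\obsout(G)$ for every graph, it suffices to prove the stronger assertion that $\obsout(G)\le 2$ for every planar $G$. The plan is to produce, for an arbitrary planar graph, a straight-line drawing together with two obstacles: the outside obstacle, and a single interior obstacle contained in one bounded face $F^{*}$ of the drawing (it may be taken to be a ``fattened tree'' inside $F^{*}$, which by Observation~\ref{O:simplyconnected} is as good as any interior region, and in any case a connected interior region can touch only one face, since adjacent faces of a graph drawing share an actual edge). With such a drawing, a non-edge $ab$ is blocked exactly when the open segment $\overline{ab}$ either exits the outer boundary or enters $F^{*}$, so the whole problem becomes: find a drawing in which every non-edge does one of these two things.

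I would build such a drawing for a $3$-connected planar graph from a fixed plane embedding by the incremental canonical-ordering method (de Fraysseix--Pach--Pollack, or equivalently via a Schnyder realizer), inserting vertices $v_{1},\dots,v_{n}$ so that each new vertex sees a contiguous arc of the current outer boundary. The device I would try to carry through the construction is a ``two-obstacle invariant'': after inserting $v_{k}$, the partial drawing admits a representation witnessing $\obsout\le 2$ whose interior obstacle lies in a bounded face incident to the outer boundary, positioned so that every vertex already buried beneath the boundary is visible through that face along a segment that misses all current edges. When $v_{k+1}$ is added, its new non-edges split into those going to still-exposed boundary vertices, which a small perturbation of the placement of $v_{k+1}$ should route outward to the outer obstacle (in the spirit of the order-$5$ argument behind Figure~\ref{F:5cycle}), and those going to buried vertices, which the interior face is supposed to catch; one must also check that no previously blocked non-edge loses its blocking region. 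The connectivity reductions --- pendant vertices via Proposition~\ref{P:pendant}, and block- and $2$-separator decompositions that nest one part's drawing inside a face of another --- are themselves delicate, since one cannot afford a separate interior obstacle per piece: as Corollary~\ref{C:obs23} already illustrates, merging two two-obstacle drawings into one requires rearranging the pieces, so I would expect to interleave these reductions with the main construction rather than dispatch them first.

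The hard part --- and the reason this is stated only as a conjecture --- is the claim hidden in the invariant: that a single bounded face can always be chosen to ``see'' all the buried non-edges at once. There is no obvious structural reason a planar graph should be this accommodating, and the difficulty is of the same flavour as what makes $\obsout(X_{n})=2$ (rather than $1$) plausible for every $n\ge 4$. I would attack it by combining the construction above with the necessary conditions of Lemma~\ref{L:twoobs} and the SAT encoding of Section~\ref{S:obs2}: feeding a growing family of planar triangulations (and their induced subgraphs) to the solvers would either expose a planar graph with $\obs\ge 3$, refuting the conjecture, or supply explicit two-obstacle drawings from which one might distill a uniform construction, much as the general embedding of $X_{n}$ in Figure~\ref{F:gyroGeneral} was extracted from the small cases in Figures~\ref{F:IcosPix} and~\ref{F:gyroDipyramids}.
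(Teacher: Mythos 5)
This statement is an open conjecture in the paper, not a theorem: the authors explicitly say they have been unable to prove it, and they offer no argument beyond the examples showing that some planar graphs need two obstacles and none are known to need three. Your proposal, to its credit, recognizes this and is honest that it is a plan of attack rather than a proof. But as a proof it has a genuine and decisive gap, which you yourself name: the ``two-obstacle invariant'' --- that throughout the canonical-ordering construction a single bounded face can be kept positioned so that every buried non-edge passes through it --- is precisely the unresolved content of the conjecture, and nothing in the proposal establishes it. Nothing in the canonical-ordering machinery forces the segments from a newly inserted vertex to its non-neighbors to be routable either outward or through one fixed face; a new vertex typically has $\Theta(n)$ non-neighbors scattered on both sides of the current boundary, and a ``small perturbation'' of one vertex cannot in general redirect all of those segments. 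The analogy with the order-$5$ argument of Figure~\ref{F:5cycle} does not scale: that argument works because with $5$ points every non-edge can be made to exit the convex hull, which fails already for the graphs $X_4$, $I$, $X_6$ of Proposition~\ref{P:gesdp-obsout}.

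A second, independent gap is the reduction to the $3$-connected case. You acknowledge that the decomposition steps are delicate, but the difficulty is worse than ``delicate'': Lemma~\ref{L:disjun} shows that two components each needing an interior obstacle cannot share the outer obstacle, so a drawing of $G_1\cupdotop G_2$ with two obstacles must make one interior obstacle serve both components simultaneously, and Corollary~\ref{C:obs23} achieves this only by an ad hoc interleaving of two specific copies of $X_4$. There is no general argument in the paper --- and none in your proposal --- that handles $k$ disjoint copies of $X_4$, or a $2$-sum of many such pieces, with a single interior obstacle. These families are natural candidates for counterexamples, which is exactly why the statement remains a conjecture. The concluding suggestion to run the SAT encoding of Section~\ref{S:obs2} on planar triangulations is a reasonable experiment but, as the paper itself notes, satisfiability of those instances yields no upper bound on the obstacle number, so even a fully successful computation would not prove the conjecture.
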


%denoted by $D$,
%using two obstacles, one of which is an outside obstacle.
%We suspect that only one obstacle is required,
%in part because the
%corresponding
%SAT instance is satisfiable;
%however, we have been unable to construct an embedding
%demonstrating
%that $\obs(D) = 1$.
%\begin{figure}[htbp]
%\begin{center}
%\includegraphics[width=\linewidth]{Dodec2ObstacleNice.pdf}
%
%\caption{A two-obstacle representation of the dodecahedron.
%The dodecahedron itself is shown with thick black edges,
%while the non-edges are shown as thin gray and magenta line segments.
%One obstacle is the outside;
%the second is the central polygon,
%highlighted in pale magenta.
%Non-edges that pass through (and hence are blocked by)
%the central obstacle are colored magenta,
%while non-edges blocked by the outside obstacle are colored gray.}
%\label{F:dodec}
%\end{center}
%\end{figure}
%
%\begin{proposition}
%$\obs(D)\le \obsout(D) \le 2.$
%\end{proposition}
%
%\begin{conjecture}
%$\obs(D) = \obsout(D) = 1.$
%\end{conjecture}

In general,
little is known about the least order
of a graph with any particular
(outside) obstacle number.

\begin{question} What is the smallest order
of a
graph $G$ with $\obsout(G) = 2$?
With $\obs(G) = 2$?
With $\obsout(G)$ or $\obs(G) > 2$?\end{question}

Again, by Proposition~\ref{P:ord5},
in each case the answer
must be at least $6$.

We have found the above questions quite
resistant to solution. Perhaps this is unsurprising since Johnson and Sarioz showed in \cite{JohSar11} that computing the obstacle number of a plane graph is NP-hard.
Two examples of graphs of order $10$
with obstacle number $2$ have been found,
namely $K_{5,5}^{*}$ and $X_{4}$;
none of smaller order
are known.
If we knew of a single graph of order $9$ or less
for which one of the SAT instances we construct
is not satisfiable,
then we could reduce our current bound of $10$;
however we have found no such graph.

It seems plausible that an approach to answering
the above questions
would be a brute-force application of SAT instances
to all graphs with order strictly less than $10$.
However, this naive approach has two flaws.
First,
there are a large number of graphs of
order at most $9$
(for example, there are $11117$ connected graphs of order $8$
and $261080$ connected graphs of order $9$~\cite[Sequence~A001349]{OEIS}),
and there are significant time and computational issues
involved in processing
the SAT instances for all these graphs.

Second,
while non-satisfiability of the SAT instance
for one of these graphs would imply
that the corresponding (outside) obstacle number
was strictly greater than $1$,
satisfiability of
an instance
does not imply any bound on
the corresponding obstacle number.
The solution of one of our SAT instances
gives only a specification of clockwise/counter-clockwise
orientation
for each triple of points.
This might not correspond to any actual point placement in the plane.
Or it may correspond to many point placements.
And even if one of these gives the desired obstacle representation,
others may not;
or none of them may.
Furthermore, a single SAT instance
can have exponentially many solutions,
each of which may need to be checked,
in order to find an obstacle representation.
In any case,
satisfiability provides us
only with a starting point in
the search for an obstacle representation;
we know of no efficient, reliable technique
for actually finding such a representation without human intervention
and invention.

\bibliographystyle{amsplain}
\bibliography{ObstacleNumbers}

\end{document}